\numberwithin{equation}{section}
\definecolor{darkgreen}{rgb}{0,0.7,0.1}
\theoremstyle{plain}
\newtheorem{theorem}{Theorem}[section]
\newtheorem{proposition}[theorem]{Proposition}
\newtheorem{lemma}[theorem]{Lemma}
\newtheorem{corollary}[theorem]{Corollary}
  \theoremstyle{remark}
\newtheorem{remark}[theorem]{Remark}
  \theoremstyle{definition}
\newtheorem{definition}[theorem]{Definition}
\def\R{\mathbb{R}}
\def\N{\mathbb{N}}
\def\ga{`}
\def\ti{~}
\def\eps{\varepsilon}
\def\eps{\varepsilon}
\begin{document}
\subjclass[2010]{35R11, 35B09, 35B45, 35A15, 60G22.}

\keywords{Nonlocal elliptic equation, Fractional Laplacian, Sobolev-supercritical nonlinearities, Nonlocal Neumann boundary conditions, Variational methods.}

\title[A nonlocal supercritical Neumann problem]{A nonlocal supercritical Neumann problem}

\author[E. Cinti]{Eleonora Cinti}
\address{Dipartimento di Matematica\newline\indent
Alma Mater Studiorum Universit\`a di Bologna
\newline\indent
piazza di Porta San Donato, 5\newline\indent
40126 Bologna - Italy}
\email{eleonora.cinti5@unibo.it}

\author[F. Colasuonno]{Francesca Colasuonno}
\address{Dipartimento di Matematica ``Giuseppe Peano''\newline\indent
Universit\`a degli Studi di Torino
\newline\indent
via Carlo Alberto, 10\newline\indent
10123 Torino - Italy}
\email{francesca.colasuonno@unito.it}

\date{\today}

\begin{abstract}
We establish existence of positive non-decreasing radial solutions for a nonlocal nonlinear Neumann problem both in the ball and in the annulus. The nonlinearity that we consider is rather general, allowing  for supercritical growth (in the sense of Sobolev embedding). The consequent lack of compactness can be overcome, by working in the cone of non-negative and non-decreasing radial functions. Within this cone, we establish some a priori estimates which allow, via a truncation argument, to use variational methods for proving existence of solutions. As a side result, we prove a strong maximum principle for nonlocal Neumann problems, which is of independent interest.
\end{abstract}

\maketitle

\section{Introduction}
For $s>1/2$, we consider the following nonlocal Neumann problem
\begin{equation}\label{P}
\begin{cases}
(-\Delta)^s u+u=f(u)\quad&\mbox{in }\Omega,\\
u\ge 0\quad&\mbox{in }\Omega,\\
\mathcal N_s u=0\quad&\mbox{in }\mathbb R^n\setminus \overline \Omega.
\end{cases}
\end{equation}

Here $\Omega$ is a radial domain of $\mathbb R^n$, it is either a ball
\begin{equation}\label{palla}
\Omega=B_R:=\{x\in\mathbb R^n\,:\, |x|<R\}, \quad R>0,
\end{equation}
or an annulus 
\begin{equation}\label{anello}
\Omega=A_{R_0,R}:=\{x\in\mathbb R^n\,:\, R_0<|x|<R\}, \quad 0<R_0<R.
\end{equation}
Furthermore, $n\ge1$, $(-\Delta)^s$ denotes the fractional Laplacian
\begin{equation}\label{FL}
(-\Delta)^su(x):=c_{n,s}\,\mathrm{PV}\int_{\mathbb R^n}\frac{u(x)-u(y)}{|x-y|^{n+2s}}dy,
\end{equation}
and $\mathcal N_s$ is the following nonlocal normal derivative
\begin{equation}\label{Neu}
\mathcal N_s u(x):=c_{n,s}\int_\Omega\frac{u(x)-u(y)}{|x-y|^{n+2s}}dy\quad\mbox{for all }x\in\mathbb R^n\setminus\overline \Omega
\end{equation}
first introduced in \cite{DROV}, and $c_{n,s}$ is a normalization constant. It is a well known fact that the fractional Laplacian $(-\Delta)^s$ is the infinitesimal generator of a L\'evy process. The notion of nonlocal normal derivative $\mathcal N_s$ has also a particular probabilistic interpretation; we will comment on it later on in Section 2.  
We stress here that, with this definition of nonlocal Neumann boundary conditions, problem \eqref{P} has a variational structure. 

In this paper, we study the existence of non-constant solutions of \eqref{P} for a superlinear nonlinearity $f$, which can possibly be supercritical in the sense of Sobolev embeddings. 

In order to state our main result, we introduce the hypotheses on $f$. We assume that $f\in C^{1,\gamma}([0,\infty))$, for some $\gamma>0$, satisfies the following conditions:

\begin{itemize}
\item[$(f_1)$] $f'(0)=\lim_{t\to 0^+}\frac{f(t)}{t}\in (-\infty,1)$;
\item[$(f_2)$] $\liminf_{t\to\infty}\frac{f(t)}{t}>1$;
\item[$(f_3)$] there exists a constant $u_0>0$ such that $f(u_0)=u_0$ and $f'(u_0)>\lambda_2^{+,\mathrm{r}}+1$,
\end{itemize}
where $\lambda_2^{+,\mathrm{r}}>0$ is the second radial increasing eigenvalue of the fractional Laplacian with (nonlocal) Neumann boundary conditions.

Clearly, as a consequence of $(f_1)$, we know that $f(0)=0$ and $f$ is below the line $t$ in a right neighborhood of $0$. The results of the paper continue to hold if we weaken $(f_1)$ as follows
\begin{itemize}
\item[$(f'_1)$] $f(0)=0$, $f'(0)\in(-\infty,1]$ and $f(t)<t$ in $(0,\bar t)$ for some $\bar t>0$.
\end{itemize} 

A prototype nonlinearity satisfying $(f_1)$ and $(f_2)$ is given by 
$$
f(t):=t^{q-1}-t^{r-1},\mbox{ with }2\le r<q.
$$
For $q$ large enough, the above function satisfies condition $(f_3)$ as well.

We observe that $(f_1)$ and $(f_2)$ are enough to prove the existence of a mountain pass-type solution. The additional hypothesis $(f_3)$ is needed to prove that such a solution is non-constant. In particular, the existence of a fixed point $u_0$ of $f$ is a consequence of $(f_1)$, $(f_2)$, and the regularity of $f$; moreover, in view of $\int_\Omega (-\Delta)^s u dx=0$ (cf. \eqref{int-Deltas} below), the fact that $f(t)-t$ must change sign at least once is a natural compatibility condition for the existence of solutions. 
\smallskip

Our main result can be stated as follows. 

\begin{theorem}\label{thm:main}
Let $s>1/2$ and $f\in C^{1,\gamma}([0,\infty))$, for some $\gamma>0$, satisfy assumptions $(f_1)$--$(f_3)$. Then there exists a non-constant, radial, radially non-decreasing solution of \eqref{P} which is of class $C^2$ and positive almost everywhere in $\Omega$. In addition, if $u_{0,1},\ldots,u_{0,N}$ are $N$ different positive constants satisfying $(f_3)$, then \eqref{P} admits $N$ different non-constant, radial, radially non-decreasing, a.e. positive solutions. 
\smallskip 

If $\Omega=A_{R_0,R}$, the same existence and multiplicity result holds also for non-constant, radial, radially non-increasing, a.e. positive $C^2$ solutions of \eqref{P}.
\end{theorem}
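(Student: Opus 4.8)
The plan is to combine a truncation argument with variational methods carried out inside the cone
\[
\Ccal:=\{u\in X\ :\ u\ \text{radial, non-negative and radially non-decreasing}\}
\]
(and, on the annulus, the analogous cone of radially non-increasing functions), where $X$ denotes the energy space for \eqref{P} introduced in \cite{DROV}. The guiding idea is that, although the Sobolev-supercriticality of $f$ destroys compactness on all of $X$, the restriction to $\Ccal$ restores it. I would work with the energy functional $\Ecal(u)=\frac{c_{n,s}}{4}\iint_{\R^{2n}\setminus(\R^n\setminus\Om)^2}\frac{|u(x)-u(y)|^2}{|x-y|^{n+2s}}\,dx\,dy+\frac12\int_\Om u^2-\int_\Om F(u)$, with $F'=f$, whose critical points are exactly the weak solutions of \eqref{P}. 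Hypothesis $(f_1)$ (or $(f_1')$) makes $0$ a strict local minimum of $\Ecal$, since the quadratic part dominates $\int_\Om F(u)$ near the origin, while $(f_2)$ produces, by taking $u$ equal to a large constant, an element $e\in\Ccal$ with $\Ecal(e)<0$; hence $\Ecal$ has the mountain-pass geometry along paths contained in $\Ccal$.

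The first substantial step is an a priori bound in the cone. Since $s>1/2$, I expect an estimate of Strauss type, $\|u\|_{L^\infty(\Om)}\le C\|u\|_X$ for every $u\in\Ccal$, obtained by combining the one-dimensional embedding $H^s\hookrightarrow C^{0,s-1/2}$ on a radial interval bounded away from the origin with the elementary $L^2$-control of the value of a monotone function at an interior radius. From this one deduces that every critical point of $\Ecal$ in $\Ccal$ lying at the mountain-pass level satisfies $\|u\|_{L^\infty(\Om)}\le M$ for a constant $M$ depending only on the data. I would then truncate: pick $\tilde f\in C^{1,\gamma}([0,\infty))$ that coincides with $f$ on $[0,M+1]$ and has subcritical (say linear) growth at infinity, so that the associated functional $\Ecal_{\tilde f}$ retains the same mountain-pass geometry and the same minimax level within $\Ccal$ and, in addition, satisfies the Palais--Smale condition.

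Next I would run a mountain-pass argument inside $\Ccal$. The delicate point is that $\Ccal$ is a closed convex cone rather than a subspace, so I would verify that the negative pseudo-gradient flow of $\Ecal_{\tilde f}$ leaves $\Ccal$ invariant — exploiting the order-preserving and symmetry properties of $((-\Delta)^s+1)^{-1}$ under the nonlocal Neumann conditions — and then apply a mountain-pass theorem for functionals on $\Ccal$ to obtain a critical point $u\in\Ccal\setminus\{0\}$ of $\Ecal_{\tilde f}$, i.e.\ a nontrivial, non-negative, radially monotone weak solution of the truncated problem. Regularity theory for the fractional Laplacian with nonlocal Neumann conditions upgrades $u$ to a $C^2$ solution, and the a priori bound of the previous step forces $\|u\|_{L^\infty(\Om)}\le M$, so that $\tilde f(u)=f(u)$ and $u$ solves \eqref{P}. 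To see that $u$ is non-constant I would invoke $(f_3)$: the constant solutions of \eqref{P} are precisely the non-negative fixed points of $f$, and at the fixed point $u_0$ the second variation of $\Ecal_{\tilde f}$ is negative both on constants and on the second radially increasing Neumann eigenfunction, exactly because $f'(u_0)>\la_2^{+,\mathrm{r}}+1$; this produces paths through $u_0$ along which $\Ecal_{\tilde f}$ stays strictly below $\Ecal_{\tilde f}(u_0)$ off $u_0$, and a matching bound at the remaining constant solutions shows that the mountain-pass level is $<\Ecal_{\tilde f}(u_0)$, so $u$ cannot be constant.

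The a.e.\ positivity of $u$ then follows from the strong maximum principle for nonlocal Neumann problems established elsewhere in the paper: a nontrivial non-negative solution cannot vanish on a set of positive measure. For the multiplicity statement I would repeat the whole scheme with truncation levels and minimax classes adapted to each of the constants $u_{0,1},\dots,u_{0,N}$, the a priori estimates guaranteeing that the $N$ resulting solutions are pairwise distinct; and in the annulus the cone of radially non-increasing functions enjoys the same compactness, so the argument carries over with no essential change. The hardest part, I expect, is the a priori $L^\infty$ bound in $\Ccal$ uniform in the truncation parameter, together with the verification that the deformation and mountain-pass machinery genuinely works inside the nonlinear cone $\Ccal$; once these are in place, the remaining ingredients are a fairly standard (though technically heavy) blend of truncation, regularity and maximum-principle arguments.
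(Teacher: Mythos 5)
Your overall strategy coincides with the paper's (cone of radial monotone functions, a priori $L^\infty$ bound via the radial embedding for $s>1/2$, truncation, cone-preserving deformation and mountain pass, non-constancy via the second variation at $u_0$ along the second radial \emph{increasing} eigenfunction, strong maximum principle, and repetition for multiplicity). However, there is one genuine gap in the way you set up the minimax. You place the mountain pass between $0$ and a large constant $e$ with $\Ecal(e)<0$. With that global geometry, the only energy information you have to exclude constants is the estimate $c<\Ecal(u_0)$ coming from the path $t\mapsto t(u_0\pm s v_2)$, and this only rules out $u\equiv u_0$: the minimax critical point could perfectly well coincide with one of the \emph{other} fixed points of $f$ in $[0,K_\infty]$ (of which there may be many, and about which $(f_3)$ says nothing). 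The paper avoids this by localizing: it defines $u_-$ and $u_+$ as the fixed points of $\tilde f$ adjacent to $u_0$, restricts to the sub-cone $\Ccal_*=\{u\in\Ccal:\,u_-\le u\le u_+\}$ (in which the only constant solutions are $u_-,u_0,u_+$), proves that $\tilde T$ and hence the deformation flow preserve $\Ccal_*$ (Lemma \ref{cononelcono}, which requires showing $u_-\le \tilde T u\le u_+$, a point your sketch does not address), and runs the mountain pass between neighborhoods $U_\pm$ of $u_\pm$; the geometry lemma then forces $c>\Ecal(u_\pm)$ while the explicit path gives $c<\Ecal(u_0)$. This localization is also what makes the $N$ solutions in the multiplicity statement pairwise distinct (they live in ordered, essentially disjoint ranges $[u_{-,i},u_{+,i}]$); your ``minimax classes adapted to each constant'' gestures at this but the construction is exactly the missing ingredient.

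Two further points. First, the a priori bound must be proved \emph{before} truncation and must be uniform over the whole class $\mathfrak F_{M,\delta}$ of nonlinearities satisfying $g(t)\ge(1+\delta)t$ for $t\ge M$ (with $M,\delta$ determined by $(f_2)$), for \emph{all} weak solutions in $\Ccal$ — not, as you write, for ``critical points of $\Ecal$ at the mountain-pass level'': the untruncated functional is not even defined on $X$, and what one actually needs is that every solution in $\Ccal$ of the \emph{truncated} problem obeys the same bound $K_\infty$, which is exactly what class-uniformity delivers (the paper gets it by testing the equation with $v\equiv 1$ and with $u$ itself, combined with the cone embedding). Second, truncating to ``say linear'' growth is risky: the paper truncates to a superlinear subcritical power $t^{\ell-1}$, $\ell\in(2,2^*_s)$, precisely to secure the Ambrosetti--Rabinowitz condition and hence boundedness of Palais--Smale sequences; an asymptotically linear truncation could run into resonance and lose the (PS) condition.
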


We stress here that the situation with Neumann boundary conditions is completely different from the case with Dirichlet boundary conditions. Indeed, as for the local case $s=1$, a Poho\v{z}aev-type identity implies nonexistence of solutions under Dirichlet boundary conditions for critical or supercritical nonlinearities, cf. \cite[Corollary 1.3]{R-OS}, while here, under Neumann boundary conditions, we can find solutions even in the supercritical regime.   
Moreover, the supercritical nature of the problem prevents {\it a priori} the use of variational methods to attack the problem. Indeed, the energy functional associated to \eqref{P} is not even well-defined in the natural space where we look for solutions, i.e., $H^s_{\Omega,0}$ (cf. Section \ref{sec2}). To overcome this issue, we follow essentially the strategy used in \cite{BNW,CN}.
Our starting point is to work in the cone of non-negative, radial, non-decreasing functions  
\begin{equation}\label{cone}
\mathcal C_+(\Omega):=\left\{u\in H^s_{\Omega,0}\,:\, \begin{aligned}& u\mbox{ is radial and }u\ge0\mbox{ in }\mathbb R^n,\\\,&u(r)\le u(s) \mbox{ for all }R_0\le r\le s\le R\end{aligned}\right\},
\end{equation}
where with abuse of notation we write $u(|x|):=u(x)$ and in order to treat simultaneously the two cases $\Omega=B_R$ and $\Omega=A_{R_0,R}$, we assimilate $B_R$ into the limit case $A_{0,R}$. This cone was introduced for the local case ($s=1$) by Serra and Tilli in \cite{SerraTilli2011}, it is convex and closed in the $H^s$-topology.
The idea of working with radial functions, suggested by the symmetry of the problem, is dictated by the necessity of gaining compactness. Indeed, restricting the problem to the space of radial $H^s$ functions ($H^s_\mathrm{rad}$) allows somehow to work in a 1-dimensional domain, where we have better embeddings than in higher dimension. Nevertheless, in the case of the ball, the energy functional is not well defined even in  
$H^s_\mathrm{rad}$, since the sole radial symmetry is not enough to prevent the existence of sequences of solutions exploding at the origin. This is the reason for the increasing monotonicity request in the cone $\mathcal C_+$, cf. \cite{CM} for similar arguments in more general domains. Indeed, we can prove that all solutions of \eqref{P} belonging to $\mathcal C_+$ are a priori bounded in $H^s_{\Omega,0}$ and in $L^\infty(\Omega)$.
When the domain does not contain the origin, i.e. in the case of the annulus $R_0>0$, the monotonicity request can be avoided and it is possible to work directly in the space $H^s_\mathrm{rad}$. 
Nonetheless, working in $H^s_{\mathrm{rad}}$  would allow to prove the existence of just one radial weak solution of the equation in \eqref{P} under Neumann boundary conditions, whose sign and monotonicity are not known. Therefore, also in the case of the annulus, even if we do not need to gain compactness, we will work in $\mathcal C_+(A_{R_0,R})$ to find a non-decreasing solution, and in 
\begin{equation}\label{cone-}
\mathcal C_-(A_{R_0,R}):=\left\{u\in H^s_{A_{R_0,R},0}\,:\, \begin{aligned}&u\mbox{ is radial and }u\ge0\mbox{ in }\mathbb R^n,\\&u(r)\ge u(s) \mbox{ for all }R_0\le r\le s\le R\end{aligned}\right\},
\end{equation}
to find a non-increasing solution. 

For simplicity of notation, in the rest of the paper we will simply denote by $\mathcal C$ both $\mathcal C_+(\Omega)$ and $\mathcal C_-(A_{R_0,R})$, when the reasoning will be independent of the particular cone. 
  
In both cases, thanks to the a priori estimates, we can modify $f$ at infinity in such a way to obtain a  subcritical nonlinearity $\tilde f$.
This leads us to study a new {\it subcritical} problem, with the property that all solutions of the new problem belonging to $\mathcal C$ solve also the original problem \eqref{P}. The energy functional associated to the new problem is clearly well-defined in the whole $H^s_{\Omega,0}$. To get a solution of the new problem belonging to $\mathcal C$, we prove that a mountain pass-type theorem holds inside the cone $\mathcal C$. The main difficulty here is that we need to find a critical point of the energy, belonging to a set ($\mathcal C$) which is strictly smaller than the domain ($H^s_{\Omega,0}$) of the energy functional itself. To overcome this difficulty we build a deformation $\eta$ for the Deformation Lemma \ref{deformation} which preserves the cone, cf. also Lemma~\ref{cononelcono}. 
Once the minimax solution is found, we need to prove that it is non-constant. We further restrict our cone, working in a subset of $\mathcal C$ in which the only constant solution of \eqref{P} is the constant $u_0$ defined in $(f_3)$. In this set, we are able to distinguish the mountain pass solution from the constant using an energy estimate.

The multiplicity part of Theorem \ref{thm:main} can be easily obtained by repeating the same arguments around each constant solution $u_0$: in case we have more than one $u_0$  satisfying $(f_3)$, for each $u_{0,i}$, we work in a subset of $\mathcal C$ made of functions $u$ whose image is contained in a neighborhood of $u_{0,i}$. This allows us to localize each mountain pass solution and to prove that to each $u_{0,i}$ corresponds a different solution of the problem.

The paper is organized as follows:
\begin{itemize}
\item In Section 2, we recall some basic properties of our nonlocal Neumann problem. In particular, we describe its variational structure and we establish a strong maximum principle;
\item In Section 3, we prove the a priori bounds, both in $L^\infty$ and in the right energy space, which will be crucial for our existence result;
\item Section 4 contains the Mountain Pass-type Theorem (Theorem \ref{mountainpass}) which establishes existence of a radial, non-negative, non-decreasing solution and whose main ingredient relies on a Deformation Lemma inside the cone $\mathcal C$ (see Lemma \ref{deformation});
\item Finally, in Section 5, we prove that the solution, found via Mountain Pass argument, is not constant.
\end{itemize}

\section{The notion of nonlocal normal derivative and the variational structure of the problem}\label{sec2.0}

In this section, we comment on the notion of nonlocal normal derivative $\mathcal N_s$ and we describe some structural properties of the nonlocal Neumann problem under consideration, with particular emphasis on its variational structure.

As mentioned in the Introduction, we use the following notion of nonlocal normal derivative: 
\begin{equation}\label{N}
\mathcal N_su(x):=c_{n,s}\int_\Omega\frac{u(x)-u(y)}{|x-y|^{n+2s}}\,dy,\quad x\in \R^n\setminus \overline \Omega.
\end{equation}

As well explained in \cite{DROV}, with this notion of normal derivative, problem \eqref{P} has a variational structure. We emphasize that the operator $(-\Delta)^s $ that we consider is the standard fractional Laplacian on $\R^n$ (notice that the integration in \eqref{FL} is taken on the whole $\R^n$) and not the regional one (where the integration is done only on $\Omega$). This choice will be reflected in the associated energy functional (see e.g. \eqref{energyh}). We note in passing that, in \cite{A}, it is shown that the fractional Laplacian $(-\Delta)^su$ under homogeneous nonlocal Neumann boundary conditions ($\mathcal N_s u=0$ in $\mathbb R^n\setminus\overline{\Omega}$) can be expressed as a regional operator with a kernel having logarithmic behavior at the boundary.
There are other possible notions of \ga\ga Neumann conditions'' for problems involving fractional powers of the Laplacian (depending on which type of operator one considers), which all recover the classical Neumann condition in the limit case $s\uparrow 1$. See Setion 7 in \cite{DROV} and reference therein, for a more precise discussion on possible different definitions.

The choice of the standard fractional Laplacian $(-\Delta)^s$ and of the corresponding normal derivative $\mathcal N_s$ has also a specific probabilistic interpretation, that is well described in Section 2 of \cite{DROV}. The idea is the following. Let us a consider a particle that moves randomly in $\R^n$ according to the following law: if the particle is located at a point $x\in \R^n$, it can jump at any other point $y \in \R^n$ with a probability that is proportional to $|x-y|^{-n-2s}$. It is well known that the probability density $u(x,t)$ that the particle is situated at the point $x$ at time $t$, solves the fractional heat equation $u_t + (-\Delta)^s u =0$. If now we replace the whole space $\R^n$ with a bounded domain $\Omega$, we need to specify what are the ``boundary conditions", that is what happens when the particle exits $\Omega$. The choice of the Neumann condition $\mathcal N_s u=0$ corresponds to the following situation: when the particle reaches a point $x \in \R^n \setminus \overline \Omega$, it may jump back at any point $y \in \Omega$ with a probability density that, again, is proportional to $|x-y|^{-n-2s}$. Just as a comparison, if in place of Neumann boundary conditions, one considers the more standard Dirichlet boundary conditions (that in this nonlocal setting, reads $u\equiv 0$ in $\R^n \setminus \overline\Omega$), this would correspond to killing the particle when it exits $\Omega$.
 
We pass now to describe some variational properties of our nonlocal Neumann problem.
Let us start with an integration by part formula that justify the choice of $\mathcal N_s u$. In what follows $\Omega^c$ will denote the complement of $\Omega$ in $\R^n$.
\begin{lemma}[Lemma 3.3 in \cite{DROV}]\label{by-parts}
Let $u$ and $v$ be bounded $C^2$ functions defined on $\R^n$. Then, the following formula holds
\begin{equation}\label{int-by-parts}
\begin{aligned}
\frac{c_{n,s}}{2}\iint_{\R^{2n}\setminus(\Omega^c)^2}\frac{(u(x)-u(y))(v(x)-v(y))}{|x-y|^{n+2s}}&\,dx\,dy\\
&\hspace{-2cm}=\int_\Omega v\,(-\Delta)^su\,dx + \int_{\Omega^c}v\,\mathcal N_s u\,dx.
\end{aligned}
\end{equation}
\end{lemma}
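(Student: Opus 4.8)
The plan is to prove the integration by parts formula \eqref{int-by-parts} by a direct computation that exploits the symmetry of the kernel, essentially following the argument of \cite{DROV} but recorded here for completeness. First I would observe that the domain of integration $\mathbb{R}^{2n}\setminus(\Omega^c)^2$ splits as the disjoint union of $\Omega\times\Omega$, $\Omega\times\Omega^c$, and $\Omega^c\times\Omega$, and that the integrand
$$
K(x,y):=\frac{(u(x)-u(y))(v(x)-v(y))}{|x-y|^{n+2s}}
$$
is symmetric in $x$ and $y$, so the contributions of $\Omega\times\Omega^c$ and $\Omega^c\times\Omega$ coincide. Hence
$$
\frac{c_{n,s}}{2}\iint_{\R^{2n}\setminus(\Omega^c)^2}K(x,y)\,dx\,dy=\frac{c_{n,s}}{2}\iint_{\Omega\times\Omega}K\,dx\,dy+c_{n,s}\iint_{\Omega\times\Omega^c}K\,dx\,dy.
$$

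Next I would expand each piece. For the $\Omega\times\Omega$ term, using symmetry once more, $\frac{c_{n,s}}{2}\iint_{\Omega\times\Omega}K\,dx\,dy=c_{n,s}\iint_{\Omega\times\Omega}\frac{v(x)(u(x)-u(y))}{|x-y|^{n+2s}}\,dx\,dy$. For the mixed term I would expand $(u(x)-u(y))(v(x)-v(y))=v(x)(u(x)-u(y))-v(y)(u(x)-u(y))$ and integrate with $x\in\Omega$, $y\in\Omega^c$ in the first summand and with the roles of the variables relabeled in the second (so that the integration variable lying in $\Omega^c$ is the one carrying $v$). Collecting the terms in which the $v$-variable ranges over $\Omega$, one recognizes, after adding back the $\Omega\times\Omega$ contribution, the full principal-value integral $c_{n,s}\,\mathrm{PV}\int_{\R^n}\frac{u(x)-u(y)}{|x-y|^{n+2s}}\,dy=(-\Delta)^s u(x)$, giving $\int_\Omega v(x)(-\Delta)^s u(x)\,dx$; the terms in which the $v$-variable ranges over $\Omega^c$ assemble to $\int_{\Omega^c}v(x)\,c_{n,s}\int_\Omega\frac{u(x)-u(y)}{|x-y|^{n+2s}}\,dy\,dx=\int_{\Omega^c}v\,\mathcal N_s u\,dx$ by the very definition \eqref{N} of $\mathcal N_s$.

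The main obstacle is not algebraic but analytic: one must justify that all the manipulations above — in particular splitting the double integral, applying Fubini, and identifying the inner integral with the principal value defining $(-\Delta)^s u$ — are legitimate. This is where the hypotheses that $u,v$ are bounded and $C^2$ on all of $\R^n$ are used: the $C^2$ regularity controls the numerator $|u(x)-u(y)|\lesssim \min\{|x-y|^2,1\}$ near the diagonal, taming the $|x-y|^{-n-2s}$ singularity for $s<1$, while boundedness controls the tails at infinity, so that $K\in L^1$ on each of the relevant regions and the principal value coincides with the absolutely convergent integral away from a symmetric neighborhood of the diagonal. I would carry out this integrability bookkeeping carefully (distinguishing the near-diagonal region, where the $C^2$ bound is needed, from the far region, where boundedness and $2s>0$ suffice), and then the formula follows by rearranging absolutely convergent integrals as described. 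Since this is exactly Lemma~3.3 of \cite{DROV}, I would ultimately just cite it, but the sketch above is the proof one would reproduce.
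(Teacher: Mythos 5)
Your sketch is correct and is exactly the standard argument (split $\R^{2n}\setminus(\Omega^c)^2$ into $\Omega\times\Omega$ and the two mixed pieces, use the symmetry of the kernel, regroup the terms carrying $v$ on $\Omega$ into $(-\Delta)^s u$ and those carrying $v$ on $\Omega^c$ into $\mathcal N_s u$, with the $C^2$ bound taming the diagonal and boundedness taming the tails); the only genuinely delicate point, which you correctly flag, is that the individual term $v(x)(u(x)-u(y))|x-y|^{-n-2s}$ on $\Omega\times\Omega$ is not absolutely integrable for $s\ge 1/2$, so the symmetric splitting must be performed on the $\varepsilon$-truncated regions and then passed to the limit, recovering the principal value in the definition of $(-\Delta)^s u$. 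The paper itself does not prove this lemma but quotes it as Lemma~3.3 of \cite{DROV}, whose proof proceeds along the same lines as your sketch, so there is nothing to reconcile.
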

\begin{remark}
As a consequence of Lemma \ref{by-parts}, if $u\in C^2(\mathbb R^n)$ solves \eqref{P}, taking $v\equiv 1$ in  \eqref{int-by-parts}, we get 
\begin{equation}\label{int-Deltas}
\int_\Omega (-\Delta)^su\,dx=0.
\end{equation}
\end{remark}

We now introduce the functional space where the problem is set.
 Let $u,\,v:\mathbb R^n\to \mathbb R$ be measurable functions, we set 
\begin{equation}\label{seminorm}
[u]_{H^s_{\Omega,0}}:= \left(\frac{c_{n,s}}{2}\iint_{\mathbb R^{2n}\setminus(\Omega^c)^2}\frac{|u(x)-u(y)|^2}{|x-y|^{n+2s}}dx\,dy\right)^{1/2}
\end{equation}
and we define the space 
$$
H^s_{\Omega,0}:=\{u:\mathbb R^n\to \mathbb R,\,u \in L^2(\Omega)\, :\, [u]_{H^s_{\Omega,0}}<+\infty \}
$$
 equipped with the scalar product
$$
(u,v)_{H^s_{\Omega,0}}:=\int_\Omega uv dx+\frac{c_{n,s}}{2}\iint_{\mathbb R^{2n}\setminus(\Omega^c)^2}\frac{(u(x)-u(y))(v(x)-v(y))}{|x-y|^{n+2s}}dx\,dy,
$$
and with the induced norm
\begin{equation}\label{norm}
 \|u\|_{H^s_{\Omega,0}}:=\|u\|_{L^2(\Omega)}+[u]_{H^s_{\Omega,0}}.
\end{equation}

By \cite[Proposition 3.1]{DROV}, we know that $(H^s_{\Omega,0},(\cdot,\cdot)_{H^s_{\Omega,0}})$ is a Hilbert space. 

In this paper we will mainly work with the notion of weak solutions for problem \eqref{P}, which naturally belong to the energy space $H^s_{\Omega,0}$ but, at some point (more precisely, when we will apply a strong maximum principle -- see Proposition \ref{MaxPrinc}) we will need to consider also classical solutions. For this reason, let us recall under which condition the fractional Laplacian given by the expression \eqref{FL} is well defined. 
Let $\mathcal L_s$ denote the following set of functions:
\begin{equation}\label{L_s}
\mathcal L_s:=\left\{u:\R^n\to \R\,:\,\int_{\R^n}\frac{|u(x)|}{1+|x|^{n+2s}}\,dx <\infty\right\}.
\end{equation}

Let $\Omega$ be a bounded set in $\R^n$, $s>1/2$, and let $u\in \mathcal L_s$ be a $C^{1,2s+\varepsilon -1}$ function in $\Omega$ for some $\varepsilon>0$. Then $(-\Delta)^s u$  is continuous on $\Omega$ and its value is given by the integral in \eqref{FL} (see Proposition 2.4 in \cite{Silvestre}). 

In particular, the condition $u \in \mathcal L_s$ ensures integrability at infinity for the integral in \eqref{FL}. Moreover, if $u$ belongs to the energy space $H^s_{\Omega,0}$, then automatically it is in $\mathcal L_s$, according to the following result.

\begin{lemma}
Let $\Omega$ be a bounded set in $\R^n$. Then
$$ H^s_{\Omega,0} \subset \mathcal L_s.$$
\end{lemma}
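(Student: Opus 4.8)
The plan is to bound the tail integral $\int_{\mathbb R^n}\frac{|u(x)|}{1+|x|^{n+2s}}\,dx$ by splitting $\mathbb R^n$ into a bounded region containing $\overline\Omega$ and its complement, and on the complement exploit the finiteness of $\|u\|_{H^s_{\Omega,0}}$. Since $\Omega$ is bounded, fix $\rho>0$ with $\overline\Omega\subset B_\rho$. On $B_\rho$ the weight $\frac{1}{1+|x|^{n+2s}}$ is bounded, and $u\in L^2(\Omega)$ — but note $u$ need only be in $L^2(\Omega)$, not $L^2(B_\rho)$, so one must be a little careful: write $\int_{B_\rho}=\int_\Omega+\int_{B_\rho\setminus\Omega}$ and handle the second piece together with the exterior estimate.

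\textbf{The key step} is the exterior bound. For $x\in\mathbb R^n\setminus\Omega$ (which includes $B_\rho\setminus\Omega$ and everything outside $B_\rho$), fix a point $y_0\in\Omega$ and a small ball $B_\delta(y_0)\subset\Omega$. The seminorm \eqref{seminorm} is an integral over $\mathbb R^{2n}\setminus(\Omega^c)^2$, which contains $\Omega\times(\mathbb R^n\setminus\Omega)$; restricting to $B_\delta(y_0)\times(\mathbb R^n\setminus\Omega)$ gives
$$
[u]_{H^s_{\Omega,0}}^2\ge \frac{c_{n,s}}{2}\int_{\mathbb R^n\setminus\Omega}\int_{B_\delta(y_0)}\frac{|u(x)-u(y)|^2}{|x-y|^{n+2s}}\,dy\,dx.
$$
For $x$ far from $\Omega$ we have $|x-y|\le C(1+|x|)$ for $y\in B_\delta(y_0)$, so the kernel is $\ge c\,(1+|x|)^{-n-2s}$; combined with $|u(x)|^2\le 2|u(x)-u(y)|^2+2|u(y)|^2$ and integrating in $y$ over $B_\delta(y_0)$, this yields
$$
\int_{\mathbb R^n\setminus\Omega}\frac{|u(x)|^2}{(1+|x|)^{n+2s}}\,dx\le C\big([u]_{H^s_{\Omega,0}}^2+\|u\|_{L^2(B_\delta(y_0))}^2\big)<\infty,
$$
using $B_\delta(y_0)\subset\Omega$ for the last term. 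Finally, by Cauchy--Schwarz,
$$
\int_{\mathbb R^n\setminus\Omega}\frac{|u(x)|}{1+|x|^{n+2s}}\,dx\le\left(\int_{\mathbb R^n\setminus\Omega}\frac{|u(x)|^2}{(1+|x|)^{n+2s}}\,dx\right)^{1/2}\left(\int_{\mathbb R^n}\frac{dx}{(1+|x|)^{n+2s}}\right)^{1/2},
$$
and the second factor converges since $2s>0$ implies $n+2s>n$. Adding the elementary bound on $\int_\Omega\frac{|u|}{1+|x|^{n+2s}}\le C\|u\|_{L^1(\Omega)}\le C\|u\|_{L^2(\Omega)}$ finishes it.

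\textbf{The main obstacle} is purely bookkeeping: one must be careful that $u$ is only controlled in $L^2$ on $\Omega$ (and through the seminorm on pairs not both in $\Omega^c$), so every appearance of $\|u\|_{L^2}$ must be on a set inside $\Omega$, and the weight/kernel comparison $|x-y|\asymp 1+|x|$ must be justified uniformly for $y$ ranging over the fixed compact set $\overline{B_\delta(y_0)}\subset\Omega$ and $x$ outside a neighborhood of $\Omega$ (for $x$ in the bounded annular region near $\partial\Omega$ both the weight and the kernel contribution can be absorbed into the finite seminorm directly). No compactness or regularity of $\partial\Omega$ is needed — only boundedness of $\Omega$ — which matches the generality of the statement.
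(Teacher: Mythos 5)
Your proof is correct and follows essentially the same route as the paper's: both restrict the seminorm to (a compact subset of $\Omega$)$\times(\R^n\setminus\Omega)$, split $|u(x)-u(y)|^2$ to isolate $|u(x)|^2$, absorb the $|u(y)|^2$ cross term using $u\in L^2(\Omega)$, compare the kernel $|x-y|^{-n-2s}$ to the weight $(1+|x|)^{-n-2s}$ via $|x-y|\le C(1+|x|)$ for $y$ in a fixed compact subset of $\Omega$, and then pass from the weighted $L^2$ bound to the $\mathcal L_s$ condition by Cauchy--Schwarz/H\"older. The only differences are cosmetic (a small ball $B_\delta(y_0)$ in place of a general compact $\Omega'$, and applying the weight comparison before rather than after splitting, which in fact lets you dispense with the lower bound $|x-y|\ge\omega$ that the paper uses).
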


\begin{proof}
We prove that if $u \in H^s_{\Omega,0}$, then it satisfies the integrability condition
\begin{equation}\label{L_2}
\int_{\R^n}\frac{|u(x)|^2}{1+|x|^{n+2s}}\,dx<\infty,
\end{equation}
which, in particular, implies that $u \in \mathcal L_s$, by using H\"older inequality and observing that $(1+|x|^{n+2s})^{-1}\in L^1(\R^n)$. 

Throughout this proof we denote by $C$ many different positive constants whose precise value is not important for the goal of the proof and may change from time to time.
Let $\Omega'$ be a compact set contained in $\Omega$. We have
\begin{equation}\label{chain1}
\begin{split}
\infty &>\int_\Omega \int_{\R^n }\frac{|u(x)-u(y)|^2}{|x-y|^{n+2s}}\,dx\,dy  \\
& \ge\int_\Omega \int_{\Omega}\frac{|u(x)-u(y)|^2}{|x-y|^{n+2s}}\,dx\,dy + \int_{\Omega'} \int_{\R^n \setminus \Omega}\frac{|u(x)-u(y)|^2}{|x-y|^{n+2s}}\,dx\,dy \\
& \ge \int_\Omega \int_{\Omega}\frac{|u(x)-u(y)|^2}{|x-y|^{n+2s}}\,dx\,dy \\
&\hspace{2em}+ \frac{1}{2}\int_{\Omega'} \int_{\R^n \setminus\Omega}\frac{|u(x)|^2}{|x-y|^{n+2s}}\,dx\,dy -\int_{\Omega'} \int_{\R^n \setminus\Omega}\frac{|u(y)|^2}{|x-y|^{n+2s}}\,dx\,dy,
\end{split}
\end{equation}
where, in the last estimate we have used that $|a-b|^2 \ge \frac{1}{2} a^2-b^2$ by Young inequality.

Since $u\in H^s_{\Omega, 0}$, clearly the first term on the r.h.s is finite. Moreover, using that for $x\in \R^n \setminus\Omega$ and $y \in \Omega'$ one has that $|x-y|\ge \omega$, for some $\omega>0$, and the integrability of the kernel at infinity, we have for every $y\in\Omega'$
$$
\int_{\R^n \setminus\Omega}\frac{1}{|x-y|^{n+2s}}\,dx\le C\int_{\omega}^\infty \tau^{n-1-(n+2s)} d\tau=\frac{C}{\omega^{2s}},
$$
where $C$ is independent of $y\in\Omega'$.
Hence, 

\begin{equation}\label{chain1ba}
\begin{split}
\int_{\Omega'} \int_{\R^n \setminus\Omega}\frac{|u(y)|^2}{|x-y|^{n+2s}}\,dx\,dy&\le \int_{\Omega'} |u(y)|^2\left(\int_{\R^n \setminus\Omega}\frac{1}{|x-y|^{n+2s}}\,dx\right)\,dy\\
&\le  \frac{C}{\omega^{2s}}\int_{\Omega'}|u(y)|^2 dy < \infty.
\end{split}
\end{equation}

Therefore, combining \eqref{chain1} with \eqref{chain1ba}, we deduce that
$$
\int_{\Omega'} \int_{\R^n \setminus\Omega}\frac{ |u(x)|^2}{|x-y|^{n+2s}}\,dx\,dy < \infty.
$$

Finally, since $\Omega$ (and thus $\Omega'$) is bounded,  we have that there exists some number $d$ depending only on $\Omega$ such that $|x-y|\le d+ |x|$ for every $x \in \R^n \setminus\Omega$ and $y \in \Omega'$, which implies that
$$
\int_{\Omega'} \int_{\R^n \setminus\Omega}\frac{ |u(x)|^2}{|x-y|^{n+2s}}\,dx\,dy\ge |\Omega'| \int_{\R^n \setminus\Omega}\frac{ |u(x)|^2}{(d+|x|)^{n+2s}}\,dx.$$

This last inequality, together with the fact that
$$\int_{\Omega}\frac{ |u(x)|^2}{(d+|x|)^{n+2s}}\,dx <\infty,$$
(since $u \in L^2(\Omega)$) concludes the proof.

\end{proof}

Since it will be useful later on, we introduce also some standard notation for fractional Sobolev spaces.
We set
\begin{equation}\label{H_s}[u]_{H^s(\Omega)}:=\left(\frac{c_{n,s}}{2}\iint_{\Omega^2}\frac{|u(x)-u(y)|^2}{|x-y|^{n+2s}}\,dx\,dy \right)^{\frac{1}{2}}.\end{equation}
We denote by $H^s(\Omega)$ the space
$$H^s(\Omega):=\left\{u\in L^2(\Omega)\,:\, [u]_{H^s(\Omega)} <\infty\right\},$$
equipped with the norm
$$\|u\|_{H^s(\Omega)}=\|u\|_{L^2(\Omega)}+[u]_{H^s(\Omega)}.$$

Notice that in the definition $[u]_{H^s(\Omega)}$ the double integral is taken over $\Omega\times \Omega$, which differs from the seminorm defined in \eqref{seminorm} related to the energy functional of our problem.

Since the following obvious inequality holds between the usual $H^s$-seminorm and the seminorm $[\,\cdot\,]_{H^s_{\Omega,0}}$ defined in \eqref{seminorm}:
$$
[u]_{H^s_{\Omega,0}}\ge [u]_{H^{s}(\Omega)}\,
$$ 
as an easy consequence of the fractional compact embedding $H^s(\Omega)\hookrightarrow\hookrightarrow L^q(\Omega)$ (see for example Section 7 in \cite{Adams} and remind that $H^s(\Omega)=W^{s,2}(\Omega)$), we have the following.\smallskip

\begin{proposition}\label{embedding}
The space $H^s_{\Omega,0}$ is compactly embedded in $L^q(\Omega)$ for every $q\in[1,2_s^*)$,  where
$$2_s^*:=\begin{cases}\frac{2n}{n-2s}\quad&\mbox{if }2s<n,\\
+\infty&\mbox{otherwise}\end{cases}$$
is the fractional Sobolev critical exponent.
\end{proposition}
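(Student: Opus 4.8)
The plan is to deduce the compact embedding $H^s_{\Omega,0}\hookrightarrow\hookrightarrow L^q(\Omega)$ for $q\in[1,2_s^*)$ from the already-established pointwise inequality between seminorms $[u]_{H^s_{\Omega,0}}\ge [u]_{H^s(\Omega)}$ together with the classical compact embedding for the standard fractional Sobolev space $H^s(\Omega)=W^{s,2}(\Omega)$. First I would observe that the seminorm inequality, combined with $\|u\|_{L^2(\Omega)}=\|u\|_{L^2(\Omega)}$, gives $\|u\|_{H^s(\Omega)}\le \|u\|_{H^s_{\Omega,0}}$ for every $u\in H^s_{\Omega,0}$, so that the restriction map $u\mapsto u|_\Omega$ is a bounded (norm-nonincreasing) linear operator from $H^s_{\Omega,0}$ into $H^s(\Omega)$. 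In other words, $H^s_{\Omega,0}$ embeds continuously into $H^s(\Omega)$.

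Next I would invoke the standard fractional Sobolev embedding theory (as in Section 7 of \cite{Adams}, or the survey \cite{DROV}-adjacent references on fractional Sobolev spaces): since $\Omega$ is a bounded domain with smooth (indeed, radial ball or annulus) boundary — in particular an extension domain — the embedding $H^s(\Omega)\hookrightarrow\hookrightarrow L^q(\Omega)$ is compact for every $q\in[1,2_s^*)$, where $2_s^*=\frac{2n}{n-2s}$ when $2s<n$ and $2_s^*=+\infty$ otherwise. The desired conclusion is then simply the composition of a bounded operator with a compact operator: given a sequence $(u_k)$ bounded in $H^s_{\Omega,0}$, its restrictions form a bounded sequence in $H^s(\Omega)$ by the first step, hence admit a subsequence converging strongly in $L^q(\Omega)$ by the second step. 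This shows the inclusion $H^s_{\Omega,0}\hookrightarrow L^q(\Omega)$ is compact.

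There is no serious obstacle here; the statement is essentially a bookkeeping consequence of two facts that are either proved or cited just above in the excerpt. The only point that deserves a word of care is the regularity of $\partial\Omega$ needed to apply the classical compact fractional embedding: one should note that both the ball $B_R$ and the annulus $A_{R_0,R}$ have Lipschitz (indeed $C^\infty$) boundary, so $\Omega$ is a $W^{s,2}$-extension domain and the Rellich--Kondrachov-type theorem for $W^{s,2}(\Omega)$ applies without additional hypotheses. For completeness I would also recall that $H^s_{\Omega,0}$ is itself a Hilbert space (cited from \cite[Proposition 3.1]{DROV}), so "bounded sequence admits weakly convergent subsequence" is available should one prefer to phrase the argument via weak convergence plus compactness of the restriction; but the direct composition argument above is the cleanest and is what I would write.
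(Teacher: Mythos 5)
Your argument is correct and is exactly the one the paper uses: the seminorm inequality $[u]_{H^s_{\Omega,0}}\ge [u]_{H^{s}(\Omega)}$ gives a continuous restriction map into $H^s(\Omega)$, and composing with the classical compact embedding $H^s(\Omega)\hookrightarrow\hookrightarrow L^q(\Omega)$ for $q\in[1,2_s^*)$ (Section 7 of \cite{Adams}) yields the statement. No changes needed.
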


Given $h\in L^2(\Omega)$, we consider now the following linear problem 
\begin{equation}\label{Plin}
\begin{cases}
(-\Delta)^s u+u=h\quad&\mbox{in }\Omega,\\
\mathcal N_s u=0&\mbox{in }\R^n\setminus \overline{\Omega}.
\end{cases}
\end{equation}

\begin{definition}
We say that a function $u\in H^s_{\Omega,0}$ is a weak solution of problem \eqref{Plin} if
$$\frac{c_{n,s}}{2}\iint_{\R^{2n}\setminus(\Omega^c)^2}\frac{(u(x)-u(y))(v(x)-v(y))}{|x-y|^{n+2s}}\,dx\,dy + \int_\Omega u\, v\,dx=\int_\Omega h\,v.$$
\end{definition}

With this definition one can easily see that weak solutions of problem \eqref{Plin} can be found as critical points of the following energy functional defined on the space $H^s_{\Omega,0}$, cf. \cite[Proposition 3.7]{DROV}:
\begin{equation}\label{energyh}
\mathcal E(u):=\frac{c_{n,s}}{4}\iint_{\R^{2n}\setminus (\Omega^c)^2}\frac{|u(x)-u(y)|^2}{|x-y|^{n+2s}}\,dx\,dy+\frac{1}{2}\int_\Omega u^2\,dx -\int_\Omega hu\,dx.
\end{equation}

We state now a strong maximum principle for the fractional Laplacian with nonlocal Neumann conditions. 

\begin{theorem}\label{MaxPrinc}
Let $u\in C^{1,2s+\varepsilon-1}(\Omega)\cap \mathcal L_s$ (for some $\varepsilon>0)$ satisfy 
$$
\begin{cases}
(-\Delta)^s u\ge 0 & \mbox{in } \Omega,\\
u \ge 0 & \mbox{in } \Omega, \\
\mathcal N_s u \ge 0 &\mbox{in } \R^n\setminus \overline\Omega.
\end{cases}
$$

Then, either $u>0$ or $u \equiv 0$ a.e. in $\Omega$. 
\end{theorem}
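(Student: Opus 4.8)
The plan is to exploit the pointwise expression of the fractional Laplacian together with the nonlocal Neumann condition. Since $u \in C^{1,2s+\varepsilon-1}(\Omega)\cap\mathcal L_s$, by the cited regularity result (Proposition 2.4 in \cite{Silvestre}) the quantity $(-\Delta)^s u(x)$ is given by the integral in \eqref{FL} and is continuous on $\Omega$. First I would argue by contradiction: suppose $u\ge 0$ in $\Omega$ but $u\not\equiv 0$ and $u$ vanishes somewhere in $\Omega$ on a set of positive measure, or more simply, suppose there is a point $x_0\in\Omega$ with $u(x_0)=0$ while $u$ is not identically $0$. Since $u\ge 0$ in $\Omega$ and $\mathcal N_s u\ge 0$ on $\R^n\setminus\overline\Omega$, I would first observe that the Neumann condition forces $u\ge 0$ everywhere in $\R^n$ in a suitable sense: indeed $\mathcal N_s u(x)=c_{n,s}\int_\Omega \frac{u(x)-u(y)}{|x-y|^{n+2s}}\,dy\ge 0$ for $x\notin\overline\Omega$ gives $u(x)\ge \big(\int_\Omega |x-y|^{-n-2s}\,dy\big)^{-1}\int_\Omega \frac{u(y)}{|x-y|^{n+2s}}\,dy\ge 0$, so $u\ge 0$ a.e. in $\R^n$.

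Next, at the interior zero $x_0\in\Omega$, I would evaluate
$$
0\le (-\Delta)^s u(x_0)=c_{n,s}\,\mathrm{PV}\!\int_{\R^n}\frac{u(x_0)-u(y)}{|x_0-y|^{n+2s}}\,dy=-c_{n,s}\,\mathrm{PV}\!\int_{\R^n}\frac{u(y)}{|x_0-y|^{n+2s}}\,dy.
$$
Because $x_0$ is an interior minimum point (value $0$, and $u\ge0$ near $x_0$ with $u\in C^{1,\ldots}$), the principal value is actually a genuine convergent integral: near $x_0$, writing $u(y)=u(y)-u(x_0)-\nabla u(x_0)\cdot(y-x_0)+\nabla u(x_0)\cdot(y-x_0)$, the gradient term integrates to zero by symmetry in the PV sense and the remainder is $O(|y-x_0|^{1+2s+\varepsilon-1})=O(|y-x_0|^{2s+\varepsilon})$, which is integrable against $|x_0-y|^{-n-2s}$. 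Hence $(-\Delta)^s u(x_0)=-c_{n,s}\int_{\R^n}\frac{u(y)}{|x_0-y|^{n+2s}}\,dy\le 0$, and combined with $(-\Delta)^s u(x_0)\ge0$ we get $\int_{\R^n}\frac{u(y)}{|x_0-y|^{n+2s}}\,dy=0$. Since the kernel is strictly positive and $u\ge0$ a.e. in $\R^n$, this forces $u\equiv 0$ a.e. in $\R^n$, hence a.e. in $\Omega$, contradicting $u\not\equiv 0$. Therefore $u(x)>0$ at every point of $\Omega$ where $u$ is defined, i.e. $u>0$ a.e. in $\Omega$ (or $u\equiv 0$ a.e.).

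The main obstacle I anticipate is making the second step rigorous when $u$ need not attain an interior zero at a single point but could vanish on a set of positive measure without being a classical minimum in a neighborhood; however, the $C^{1}$-regularity in $\Omega$ together with $u\ge0$ means that any zero of $u$ in $\Omega$ is automatically a critical point with $\nabla u=0$ there, so the Hopf-type computation above applies at any such zero, and the convergence of the PV integral is genuinely controlled by the $C^{1,2s+\varepsilon-1}$ modulus. A secondary technical point is justifying the interchange of the limit in the $\mathrm{PV}$ with the positivity of the tail integral $\int_{\R^n\setminus B_\delta(x_0)}\frac{u(y)}{|x_0-y|^{n+2s}}\,dy$, which is fine since the integrand is nonnegative and we may pass to the limit $\delta\to0^+$ by monotone convergence. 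Once these are in place, the strict positivity of the kernel delivers the dichotomy immediately.
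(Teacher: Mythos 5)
Your proof is correct, and it takes a mildly but genuinely different route from the paper's. Both arguments hinge on evaluating the pointwise expression of $(-\Delta)^s u$ at a zero of $u$ in $\Omega$ and extracting a sign contradiction, but the Neumann condition enters differently. The paper supposes the zero set $Z\subset\Omega$ has \emph{positive measure}, integrates the pointwise inequality $(-\Delta)^s u(\bar x)\ge 0$ over $Z$, enlarges the domain of integration from $Z$ to all of $\Omega$, and then contradicts the integrated Neumann information $\int_{\R^n\setminus\Omega}\mathcal N_s u\,dx\ge 0$. You instead first convert $\mathcal N_s u\ge 0$ into the \emph{pointwise} inequality $u\ge 0$ on $\R^n\setminus\overline\Omega$, so that at a single interior zero $x_0$ the principal value is the (monotone, hence convergent) integral of the nonnegative function $u(y)|x_0-y|^{-n-2s}$, forcing $(-\Delta)^s u(x_0)\le 0$ and then $u\equiv 0$ a.e.\ in $\R^n$. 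What your version buys: it needs only one zero rather than a null-set argument, so when $u\not\equiv 0$ it yields the stronger conclusion that $u>0$ at \emph{every} point of $\Omega$ (not merely a.e.), and it bypasses the comparison between $\int_Z$ and $\int_\Omega$ in the paper's chain of inequalities, which is the most delicate step there. The technical points you flag are all handled correctly: integrability of the tail follows from $u\in\mathcal L_s$, local integrability near $x_0$ follows from $\nabla u(x_0)=0$ (a local minimum) together with the $C^{1,2s+\varepsilon-1}$ modulus, and the passage to the limit in the principal value is legitimate by monotone convergence since the integrand is nonnegative once $u\ge0$ on all of $\R^n$ is established.
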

\begin{proof}
Assume that $u$ is not a.e. identically zero and let us show that $u>0$ a.e. in $\Omega$.
We argue by contradiction: suppose that the set in $\Omega$ on which $u$ vanishes has positive Lebesgue measure, and let call it $Z$, i.e.
$$Z:=\{x\in \Omega\,|\,u(x)=0\},\quad \mbox{and}\quad |Z|>0.$$

Let now $\bar x \in Z$.
Since $u$ satisfies $(-\Delta)^s u\ge 0$ in $\Omega$, using the definition of fractional Laplacian,  we have that

\[\begin{split}
\int_{\R^n \setminus \Omega}\frac{u(\bar x)-u(y)}{|\bar x-y|^{n+2s}}\,dy&\ge -\int_{\Omega}\frac{u(\bar x)-u(y)}{|\bar x-y|^{n+2s}}\,dy\\
&=\int_{\Omega}\frac{u(y)}{|\bar x-y|^{n+2s}}\,dy >0,\end{split}\]
where the last strict inequality comes from the fact that we are assuming that $u$ is stritly positive on a subset of $\Omega$ of positive Lebesgue measure (otherwise it would be $u\equiv 0$ a.e. in $\Omega$).

Integrating the above inequality on the set $Z$ and using that $|Z|>0$, we deduce that
\begin{equation}\label{PM1}
\int_Z\int_{\R^n \setminus \Omega}\frac{u(\bar x)-u(y)}{|\bar  x-y|^{n+2s}}\,dy\,d\bar  x >0.\end{equation}

On the other hand, using that $u\ge 0$ in $\Omega$, we have

\begin{equation}\label{PM2}
\begin{split}
c_{n,s}\int_Z\int_{\R^n \setminus \Omega}\frac{u(\bar x)-u(y)}{|\bar x-y|^{n+2s}}\,dy\,d\bar  x &\le c_{n,s}\int_\Omega\int_{\R^n \setminus \Omega}\frac{u(x)-u(y)}{|x-y|^{n+2s}}\,dy\,dx \\
&=-c_{n,s}\int_{\R^n \setminus \Omega} \int_\Omega \frac{u(y)-u(x)}{|x-y|^{n+2s}}\,dx\,dy\\
&=-\int_{\R^n \setminus \Omega}\mathcal N_su(y)\,dy \leq 0.
\end{split}
\end{equation}
This contradicts \eqref{PM1} and concludes the proof.
\end{proof}

\begin{remark} Arguing in the same way, it is easy to see that the above strong maximum principle holds true when adding a zero order term in the equation satisfied in $\Omega$ (that is considering solutions of $(-\Delta)^s u(x) + c(x) u(x) \ge 0$ in $\Omega$).
\end{remark}

We conclude this Section with two results of \cite{DROV}.
The first one gives a further justification of calling $\mathcal N_s$ a ``nonlocal normal derivative".

\begin{proposition}[Proposition 5.1 of \cite{DROV}]
Let $\Omega$ be any bounded Lipschitz domain of $\R^n$ and let $u$ and $v$ be $C^2$ functions with compact support in $\R^n$.

Then,
$$\lim_{s\rightarrow 1} \int_{\R^n\setminus \Omega} \mathcal N_su\, v\,dx=\int_{\partial \Omega}\partial_\nu u \, v\,dx,$$
where $\partial_\nu$ denotes the external normal derivative to $\partial \Omega$.
\end{proposition}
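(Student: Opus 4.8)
The plan is to recover the classical boundary statement from the nonlocal integration by parts formula of Lemma~\ref{by-parts}, by letting $s\uparrow 1$ in the identity
$$
\frac{c_{n,s}}{2}\iint_{\R^{2n}\setminus(\Omega^c)^2}\frac{(u(x)-u(y))(v(x)-v(y))}{|x-y|^{n+2s}}\,dx\,dy=\int_\Omega v\,(-\Delta)^su\,dx+\int_{\Omega^c}v\,\mathcal N_su\,dx,
$$
valid for $u,v\in C^2$ with compact support. The two terms on the right are handled by classical results: since $v$ is smooth with compact support and $s\mapsto (-\Delta)^s u$ converges (locally uniformly on $\Omega$, with a uniform bound coming from the $C^2$ and decay hypotheses on $u$) to the classical Laplacian $-\Delta u$ as $s\to1$, dominated convergence gives $\int_\Omega v\,(-\Delta)^su\,dx\to-\int_\Omega v\,\Delta u\,dx$. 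On the other hand, the double-integral on the left is, up to the normalization, the quadratic-form pairing for the operator $(-\Delta)^s$ on the Neumann-type energy space, and the same convergence of the fractional Sobolev seminorms to the Dirichlet integral (the Bourgain--Brezis--Mironescu asymptotics, with the constant $c_{n,s}$ chosen precisely to make this work) yields that the left-hand side tends to $\int_{\R^n}\nabla u\cdot\nabla v\,dx=\int_{\Omega}\nabla u\cdot\nabla v\,dx$ plus a contribution that vanishes because $u,v$ have compact support and the region $(\Omega^c)^2$ is excluded only on a set whose contribution is controlled. Combining, the limit of the left-hand side is $\int_\Omega\nabla u\cdot\nabla v$, and subtracting $-\int_\Omega v\,\Delta u$ leaves, by the classical divergence theorem, exactly $\int_{\partial\Omega}\partial_\nu u\,v\,d\sigma$; hence $\lim_{s\to1}\int_{\Omega^c}v\,\mathcal N_su\,dx=\int_{\partial\Omega}\partial_\nu u\,v\,dx$.

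In more detail, I would first fix $v$ with $\operatorname{supp}v\subset B_\rho$ for some large $\rho$, and split the left-hand double integral according to whether both points lie in $B_{2\rho}$ or not. On the far region at least one of $x,y$ leaves $B_\rho$, the integrand involves $u(x)-u(y)$ and $v(x)-v(y)$ with $v$ vanishing there; the kernel decay $|x-y|^{-n-2s}$ together with boundedness of $u$ makes this piece $O(c_{n,s})$ times a convergent integral, which one checks tends to a finite limit (indeed to the correct ``tail'' of $\int\nabla u\cdot\nabla v$). On the near region $B_{2\rho}\times B_{2\rho}$, since $u,v\in C^2$ the difference quotients are uniformly bounded, and the Bourgain--Brezis--Mironescu limit applies verbatim: $\frac{c_{n,s}}{2}\iint\frac{(u(x)-u(y))(v(x)-v(y))}{|x-y|^{n+2s}}\to\int\nabla u\cdot\nabla v$ over that region. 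One must also verify that removing $(\Omega^c)^2$ from the domain of integration does not affect the limit in a way incompatible with the final formula, which is automatic because after passing to the limit the full integral over $\R^n\times\R^n$ reconstructs $\int_{\R^n}\nabla u\cdot\nabla v=\int_\Omega\nabla u\cdot\nabla v$ (as $u,v$ are defined on all of $\R^n$) and the excluded contribution from $(\Omega^c)^2$ is precisely what the right-hand side bookkeeping with $\int_{\Omega^c}v\,\mathcal N_su$ absorbs.

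The main obstacle I anticipate is making the passage to the limit on the quadratic form rigorous while keeping track of the exact constant $c_{n,s}$ and the exclusion of $(\Omega^c)^2$; one needs a uniform-in-$s$ integrable majorant for the near region (provided by $\|u\|_{C^2}\|v\|_{C^2}|x-y|^{2-n-2s}$ on a bounded set, which is integrable for $s<1$ and uniformly so for $s$ near $1$) and a careful bookkeeping on the tails. The convergence $(-\Delta)^su\to-\Delta u$ for compactly supported $C^2$ functions is standard (it follows from the pointwise integral representation and dominated convergence once the normalization $c_{n,s}\sim s(1-s)$ is recalled), as is the convergence of the Gagliardo seminorms; with these two ingredients in hand the rest is the classical Neumann integration by parts, and the claimed identity drops out. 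Since this is exactly Proposition~5.1 of \cite{DROV}, I would simply cite it and sketch the above as the idea of proof.
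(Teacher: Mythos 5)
The paper does not actually prove this statement: it is quoted verbatim from \cite{DROV} (Proposition 5.1 there), so there is no in-paper argument to compare with. Your overall strategy --- pass to the limit $s\uparrow 1$ in the integration-by-parts identity of Lemma \ref{by-parts}, using the classical convergence $(-\Delta)^s u\to -\Delta u$ for compactly supported $C^2$ functions and the Bourgain--Brezis--Mironescu asymptotics for the bilinear form, then conclude by Green's identity --- is indeed the route taken in \cite{DROV}.

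There is, however, a genuine error in your treatment of the excluded region $(\Omega^c)^2$. You assert that the quadratic form over $\R^{2n}\setminus(\Omega^c)^2$ tends to $\int_{\R^n}\nabla u\cdot\nabla v\,dx=\int_{\Omega}\nabla u\cdot\nabla v\,dx$, and that the contribution of $(\Omega^c)^2$ either ``vanishes'' or is ``absorbed'' by $\int_{\Omega^c}v\,\mathcal N_s u\,dx$. The identity $\int_{\R^n}\nabla u\cdot\nabla v=\int_{\Omega}\nabla u\cdot\nabla v$ is false in general: $u$ and $v$ are compactly supported in $\R^n$, not in $\Omega$, so $\nabla u\cdot\nabla v$ need not vanish on $\Omega^c$. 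Moreover the excluded piece does not vanish: the same BBM asymptotics applied on the open set $\R^n\setminus\overline\Omega$ give
$$
\frac{c_{n,s}}{2}\iint_{(\Omega^c)^2}\frac{(u(x)-u(y))(v(x)-v(y))}{|x-y|^{n+2s}}\,dx\,dy\;\longrightarrow\;\int_{\Omega^c}\nabla u\cdot\nabla v\,dx,
$$
which is precisely the (generically nonzero) correction that turns $\int_{\R^n}\nabla u\cdot\nabla v$ into $\int_{\Omega}\nabla u\cdot\nabla v$. The correct bookkeeping is $\iint_{\R^{2n}\setminus(\Omega^c)^2}=\iint_{\R^{2n}}-\iint_{(\Omega^c)^2}$, whose limit is $\int_{\R^n}\nabla u\cdot\nabla v-\int_{\Omega^c}\nabla u\cdot\nabla v=\int_{\Omega}\nabla u\cdot\nabla v$; establishing the BBM-type limit on the exterior domain (which needs a little care near the Lipschitz boundary, but is harmless here since $u,v$ are globally $C^2$ with compact support) is the missing ingredient. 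As written, your final answer is correct only because the two incorrect claims cancel each other. The remaining ingredients of your sketch --- the locally uniform convergence of $(-\Delta)^s u$ to $-\Delta u$, the uniform majorant $\|u\|_{C^2}\|v\|_{C^2}|x-y|^{2-n-2s}$ on bounded sets, and the final application of Green's identity --- are fine.
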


The last result that we recall from \cite{DROV}, describes the spectrum of the fractional Laplacian with zero Neumann boundary conditions.
\begin{theorem}[Theorem 3.11 in \cite{DROV}]
There exists a diverging sequence of non-negative values
$$0=\lambda_1<\lambda_2\le \lambda_3\le \dots,$$
and a sequence of functions $u_i:\R^n\rightarrow \R$ such that
\[\begin{cases}
(-\Delta)^su_i(x)=\lambda_i u_i(x) & \mbox{for any } x\in \Omega\\
\mathcal N_s u_i(x)=0 & \mbox{for any } x\in \R^n\setminus \overline \Omega.
\end{cases}\]
Moreover, the functions $u_i$ (restricted to $\Omega$) provide a complete orthogonal system in $L^2(\Omega)$.
\end{theorem}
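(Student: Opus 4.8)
The plan is to deduce the theorem from the spectral theorem for compact, positive, self-adjoint operators, applied to the solution operator of the linear Neumann problem~\eqref{Plin}. Since $(H^s_{\Omega,0},(\cdot,\cdot)_{H^s_{\Omega,0}})$ is a Hilbert space by \cite[Proposition 3.1]{DROV} and, for $h\in L^2(\Omega)$, the functional $v\mapsto\int_\Omega hv$ is bounded on it (because $\|v\|_{L^2(\Omega)}\le\|v\|_{H^s_{\Omega,0}}$), the Riesz representation theorem produces a unique $Th\in H^s_{\Omega,0}$ with
$$(Th,v)_{H^s_{\Omega,0}}=\int_\Omega hv\qquad\text{for all }v\in H^s_{\Omega,0},$$
i.e.\ $Th$ is the weak solution of~\eqref{Plin}. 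Taking $v=Th$ and using Cauchy--Schwarz shows $T\colon L^2(\Omega)\to H^s_{\Omega,0}$ is bounded; composing with the compact embedding $H^s_{\Omega,0}\hookrightarrow\hookrightarrow L^2(\Omega)$ of Proposition~\ref{embedding}, the operator $T$, regarded on $L^2(\Omega)$ by restricting $Th$ to $\Omega$, is compact.

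Next I would record the structural properties of $T$ on $L^2(\Omega)$. By the symmetry of $(\cdot,\cdot)_{H^s_{\Omega,0}}$, testing the equation for $Th$ against $Tg$ (and vice versa) gives $\int_\Omega h\,Tg=(Th,Tg)_{H^s_{\Omega,0}}=\int_\Omega g\,Th$, so $T$ is self-adjoint; testing the equation for $Th$ against $Th$ gives $\int_\Omega h\,Th=(Th,Th)_{H^s_{\Omega,0}}\ge\|Th\|_{L^2(\Omega)}^2\ge0$, so $T$ is positive, and Cauchy--Schwarz then yields $\|Th\|_{L^2(\Omega)}\le\|h\|_{L^2(\Omega)}$, hence $\|T\|_{L^2\to L^2}\le1$; moreover $Th=0$ forces $\int_\Omega hv=0$ for every $v$, so $T$ is injective. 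Finally, the seminorm~\eqref{seminorm} vanishes exactly on constant functions (if $|u(x)-u(y)|^2|x-y|^{-n-2s}=0$ a.e.\ on $\R^{2n}\setminus(\Omega^c)^2$, then $u$ is a.e.\ constant on $\Omega$ and a.e.\ equal to that constant on $\Omega^c$), so $\mu=1$ is an eigenvalue of $T$, realized precisely on the constants --- for which $(-\Delta)^s u\equiv0$ and $\mathcal N_s u\equiv0$ --- and it is simple.

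I would then invoke the Hilbert--Schmidt theorem: $T$ admits a non-increasing sequence of eigenvalues $1=\mu_1>\mu_2\ge\mu_3\ge\cdots\to0^+$, each of finite multiplicity, together with an $L^2(\Omega)$-orthonormal family of eigenfunctions $\{u_i\}$ which is complete in $L^2(\Omega)$ since $\ker T=\{0\}$. Each $u_i$ is the restriction to $\Omega$ of an element of $H^s_{\Omega,0}$, still denoted $u_i$, and $Tu_i=\mu_i u_i$ says that $u_i$ weakly solves~\eqref{Plin} with datum $h=\mu_i^{-1}u_i$, that is
$$(-\Delta)^s u_i=\lambda_i u_i\ \text{ in }\Omega,\qquad \mathcal N_s u_i=0\ \text{ in }\R^n\setminus\overline\Omega,\qquad \lambda_i:=\frac{1-\mu_i}{\mu_i}.$$
Since $0<\mu_i\le1$, this gives $0=\lambda_1<\lambda_2\le\lambda_3\le\cdots$, and $\mu_i\to0$ gives $\lambda_i\to+\infty$. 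To read these equations pointwise as in the statement (rather than merely weakly, via Lemma~\ref{by-parts}), one bootstraps interior regularity for the fractional Laplacian starting from $\lambda_i u_i\in L^2(\Omega)$ (cf.\ \cite{Silvestre} and the discussion around~\eqref{L_s}), so that $u_i$ is smooth inside $\Omega$ and belongs to $\mathcal L_s$, making both $(-\Delta)^s u_i$ and $\mathcal N_s u_i$ classical pointwise expressions.

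The abstract scheme is the standard spectral theory of compact self-adjoint operators; the two places where the structure of the problem really enters, and hence the main obstacles, are: the simplicity of the top eigenvalue $\mu_1=1$ --- equivalently $\lambda_1=0$, hence $\lambda_2>0$ --- which relies on the kernel of the seminorm~\eqref{seminorm} being exactly the constants, and the elliptic regularity bootstrap needed to upgrade the weak eigenvalue identities to the pointwise equations. I also note that orthogonality of the $u_i$ is asked for in $L^2(\Omega)$ rather than in $H^s_{\Omega,0}$, which is automatic here since the $u_i$ diagonalize the self-adjoint operator $T$ acting on $L^2(\Omega)$.
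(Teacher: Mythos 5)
Your argument is correct and is essentially the standard one: the paper itself gives no proof of this statement, quoting it directly as Theorem 3.11 of \cite{DROV}, and the proof there follows exactly the scheme you outline (compact, self-adjoint, positive, injective solution operator of the linear Neumann problem, Hilbert--Schmidt theorem, $\lambda_i=(1-\mu_i)/\mu_i$, with $\lambda_1=0$ simple because the kernel of the seminorm \eqref{seminorm} consists precisely of the a.e.\ constants). The only caveat is that in \cite{DROV} the eigenvalue equations are understood in the weak sense, so the regularity bootstrap you mention at the end is an optional add-on rather than part of the cited statement.
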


\section{A priori bounds for monotone radial solutions}\label{sec2}
Without loss of generality, from now on we suppose that $f$ satisfies the further assumption
\begin{itemize}
\item[$(f_0)$] $f(t)\ge 0$ and $f'(t)\ge 0$ for every $t\in[0,\infty)$.
 \end{itemize}
If this is not the case, it is always possible to reduce problem \eqref{P} to an equivalent one having a non-negative and non-decreasing nonlinearity, cf. \cite[Lemma\ti 2.1]{CN}. 
\smallskip

We look for solutions to \eqref{P} in the cone $\mathcal C$ defined in \eqref{cone} and \eqref{cone-}.

It is easy to prove that $\mathcal C$ is a closed convex cone in $H^s_{\Omega,0}$, i.e., the following properties hold for all $u,\,v\in\mathcal C$ and $\lambda\ge0$:
\begin{itemize}
\item[(i)] $\lambda u\in \mathcal C$;
\item[(ii)] $u+v\in \mathcal C$;
\item[(iii)] if also $-u\in\mathcal C$, then $u\equiv0$;
\item[(iv)] $\mathcal C$ is closed in the $H^s$-topology.
\end{itemize}
We will use the above properties of $\mathcal C$ in Lemma \ref{deformation}.
\medskip

We state now an embedding result for radial functions belonging to fractional Sobolev spaces, which can be found in \cite{SSV} (see also \cite{BSY}).

\begin{lemma}
If $s>1/2$ and $0<\bar R<R$, there exists a positive constant $C_{\bar R}=C_{\bar R}(\bar R,n,s)$ such that 
\begin{equation}\label{embLinf}
\left\|u\right\|_{L^\infty(B_R\setminus B_{\bar R})}\le C_{\bar R}\|u\|_{H^s_{B_R\setminus B_{\bar R},0}}
\end{equation}
for all $u$ radial in $H^s_{B_R\setminus B_{\bar R},0}$.
\end{lemma}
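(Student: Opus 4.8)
The plan is to reduce the statement to a one-dimensional Sobolev embedding on a bounded interval. Since $u$ is radial on the annulus $B_R \setminus B_{\bar R}$, write $u(x) = v(|x|)$ with $v$ defined on $[\bar R, R]$; I would first show that the profile $v$ inherits enough fractional regularity from $u$ so that $v$ lies in $H^{s}$ (or at least $H^{s'}$ for some $s' > 1/2$) of the interval $[\bar R, R]$, with the seminorm controlled by $\|u\|_{H^s_{B_R \setminus B_{\bar R},0}}$. Concretely, one bounds
\[
\int_{\bar R}^R \int_{\bar R}^R \frac{|v(\rho)-v(\sigma)|^2}{|\rho-\sigma|^{1+2s}}\,d\rho\,d\sigma
\]
from above by a constant (depending on $\bar R$, $n$, $s$) times the Gagliardo double integral of $u$ over $(B_R\setminus B_{\bar R})^2$, using that for $\rho,\sigma$ bounded away from zero and bounded above, and for $x,y$ on the corresponding spheres, $|x-y|$ is comparable to a combination of $|\rho-\sigma|$ and the angular distance, so integrating the kernel over the spheres produces (after a lower bound on the measure of the set of $y$ with $|x-y|\approx|\rho-\sigma|$) a factor $\sim |\rho-\sigma|^{-(1+2s)}$. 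Likewise $\|v\|_{L^2(\bar R,R)}$ is comparable to $\|u\|_{L^2(B_R\setminus B_{\bar R})}$ up to the weight $\rho^{n-1}$, which is bounded above and below on $[\bar R, R]$.

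Once $v \in H^s(\bar R,R)$ with $s > 1/2$, I would invoke the classical Morrey-type embedding $H^s(I) \hookrightarrow C^{0,s-1/2}(\bar I) \hookrightarrow L^\infty(I)$ for a bounded interval $I$, which holds precisely because $2s > 1$, i.e. $2s^*_{1,s} = \infty$ in dimension one. This gives
\[
\|v\|_{L^\infty(\bar R,R)} \le C\,\|v\|_{H^s(\bar R,R)} \le C_{\bar R}\,\|u\|_{H^s_{B_R\setminus B_{\bar R},0}},
\]
and since $\|u\|_{L^\infty(B_R\setminus B_{\bar R})} = \|v\|_{L^\infty(\bar R,R)}$, the claimed inequality \eqref{embLinf} follows. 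Alternatively, and more cleanly, I would simply cite \cite{SSV} (and \cite{BSY}) directly, since the statement is quoted verbatim from there; the proof sketch above indicates why it is true.

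The main obstacle is the first step: carefully estimating the angular integration to pass from the $n$-dimensional Gagliardo seminorm of the radial function $u$ to the one-dimensional Gagliardo seminorm of its profile $v$. The delicate point is the lower bound — one needs that the kernel $|x-y|^{-(n+2s)}$, integrated over the sphere of radius $\sigma$ for fixed $x$ on the sphere of radius $\rho$, is bounded below by $c\,|\rho-\sigma|^{-(1+2s)}$ with $c$ depending only on $\bar R$, $n$, $s$; this uses crucially that $\rho,\sigma \ge \bar R > 0$ (the origin is excluded), so that the spheres have uniformly positive radius and the chord length is comparable to the parameter difference plus the angular arc. The upper direction for $\|u\|_{L^2}$ versus $\|v\|_{L^2}$ and the $L^\infty$ identity are routine. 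Since the lemma is attributed to existing literature, for the paper it suffices to state it with the citation, and the one-dimensional reduction plus Morrey embedding is the conceptual content one would reproduce if a self-contained proof were required.
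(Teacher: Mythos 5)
Your argument is correct in outline but follows a genuinely different route from the paper. The paper exploits radiality to write $|u(x)|^2$ as the average of $u^2$ over the sphere $\partial B_\rho$, and then applies the fractional trace inequality $H^s(B_\rho\setminus B_{\bar R})\to L^2(\partial B_\rho)$ (valid precisely because $s>1/2$), tracking the dependence of the constant on $\rho\ge\bar R$; this is essentially the proof of Lemma 4.3 in the cited reference and is short once the trace theorem is taken as a black box. You instead perform a dimension reduction: you bound the one-dimensional Gagliardo seminorm of the profile $v$ by the $n$-dimensional one of $u$ via a lower bound on the angularly integrated kernel, and then invoke the one-dimensional Morrey embedding $H^s(I)\hookrightarrow L^\infty(I)$ for $s>1/2$. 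Both approaches hinge on $s>1/2$ in the same way (trace theorem versus $1$-d Morrey), and both correctly isolate where $\bar R>0$ is needed. Your route is more self-contained but leaves its key step — the estimate
\[
\int_{S^{n-1}}\int_{S^{n-1}}\frac{d\theta\,d\phi}{|\rho\theta-\sigma\phi|^{n+2s}}\ \gtrsim\ |\rho-\sigma|^{-(1+2s)},\qquad \bar R\le\rho,\sigma\le R,
\]
described rather than carried out; it does go through (using $|\rho\theta-\sigma\phi|^2=(\rho-\sigma)^2+2\rho\sigma(1-\cos\alpha)\lesssim(\rho-\sigma)^2+R^2\alpha^2$ and scaling the angular integral), though note that the degeneration of the constant as $\bar R\to0$ actually enters mainly through the Jacobian weights $\rho^{n-1}\sigma^{n-1}\ge\bar R^{2(n-1)}$ and the comparison $\|v\|_{L^2(\bar R,R)}\lesssim\bar R^{-(n-1)/2}\|u\|_{L^2}$, rather than through the kernel bound itself. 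To make the write-up complete you would need to execute that computation; alternatively, citing the references directly, as you suggest and as the paper effectively does, suffices.
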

\begin{proof} The proof is the same as in \cite[Lemma 4.3]{BSY}, we report it here for the sake of completeness. 
Let $\bar R<\rho< R$. Using that $u$ is radial, $s>1/2$, and the trace inequality for $H^s(B_\rho\setminus B_{\bar R})$ (see e.g. \cite[Section 3.3.3]{Triebel}), we have for every $x\in \partial B_\rho$
$$
\begin{aligned}
|u(x)|^2&=\frac{\rho^{1-n}}{n\omega_n}\int_{\partial B_\rho}u^2 d\mathcal H^{n-1}\\
&\le C\frac{\rho^{1-n}}{n\omega_n} \rho^{2s-1}\left\{[u]_{H^s(B_\rho\setminus B_{\bar R})}^2+\frac{1}{\rho^{2s}}\|u\|_{L^2(B_\rho\setminus B_{\bar R})}^2\right\},
\end{aligned}
$$
where $\omega_n$ is the volume of the unit sphere in $\mathbb R^n$ and $d\mathcal H^{n-1}$ denotes the $(n-1)$-dimensional Hausdorff measure. 
We immediately deduce that for every $x\in \partial B_\rho$
\begin{equation*}
\begin{aligned}
|u(x)| &\le \begin{cases}
C|x|^{-\frac{n-2s}{2}}\|u\|_{H^s(B_\rho\setminus B_{\bar R})}\;&\mbox{ if } \rho=|x| \ge 1,\medskip \\
C\frac{|x|^{-\frac{n-2s}{2}}}{\rho^s}\|u\|_{H^s(B_\rho\setminus B_{\bar R})}&\mbox{ if } \rho=|x| < 1
\end{cases}\\
&\le C|x|^{-\frac{n-2s}{2}}\left(1+\frac{1}{\rho^s}\right)\|u\|_{H^s(B_\rho\setminus B_{\bar R})}\\
&\le C\bar R^{-\frac{n-2s}{2}}(1+\bar R^{-s})\|u\|_{H^s(B_R\setminus B_{\bar R})}
\end{aligned}
\end{equation*}

Hence, we conclude that 
$$
\begin{aligned}
\left\|u\right\|_{L^\infty(B_R\setminus B_{\bar R})}&\le C\bar R^{-\frac{n-2s}{2}}(1+\bar R^{-s})\|u\|_{H^s(B_R\setminus B_{\bar R})}\\
&\le C\bar R^{-\frac{n-2s}{2}}(1+\bar R^{-s})\|u\|_{H^s_{B_R\setminus B_{\bar R},0}},
\end{aligned}
$$
which proves the statement, with $C_{\bar R}:=C\bar R^{-\frac{n-2s}{2}}(1+\bar R^{-s})$.
\end{proof}

As mentioned above, working in the cones $\mathcal C$ of non-negative, radial and monotone  functions has the advantage to have an a priori $L^\infty$ bound, according to the following lemma. In particular, from the proof of the next lemma it will be clear the role of the {\it non-decreasing} monotonicity in the case of the ball. 

\begin{lemma}\label{L-infty}
Let $s>1/2$ and $\Omega$ be the ball $B_R$ or the annulus $A_{R_0,R}$ as in \eqref{palla}, \eqref{anello}. There exists a constant $C=C(R,R_0,n,s)>0$ such that
\[ \|u\|_{L^{\infty}(\Omega)}\le C\|u\|_{H^s_{\Omega,0}} \quad \mbox{for all } u\in \mathcal C.\]
\end{lemma}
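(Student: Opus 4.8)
The plan is to reduce the bound to the radial $L^\infty$--embedding \eqref{embLinf}, which holds on any annulus $B_R\setminus B_{\bar R}$ with $\bar R\in(0,R)$ but whose constant $C_{\bar R}$ degenerates as $\bar R\to0$. So the whole issue is to stay away from the origin: for the ball this is precisely what the non-decreasing monotonicity encoded in $\mathcal C$ buys us, whereas for the annulus the origin is not in $\Omega$ at all and there is essentially nothing to do.

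\emph{Case $\Omega=A_{R_0,R}$.} Since $\Omega=B_R\setminus B_{R_0}$ with $R_0>0$, I would simply apply \eqref{embLinf} with $\bar R=R_0$ to the radial function $u\in\mathcal C$, obtaining $\|u\|_{L^\infty(\Omega)}=\|u\|_{L^\infty(B_R\setminus B_{R_0})}\le C_{R_0}\|u\|_{H^s_{B_R\setminus B_{R_0},0}}=C_{R_0}\|u\|_{H^s_{\Omega,0}}$; here the sign of the monotonicity, i.e. whether $u\in\mathcal C_+$ or $u\in\mathcal C_-$, is irrelevant.

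\emph{Case $\Omega=B_R$.} Fix any $\bar R\in(0,R)$, say $\bar R=R/2$. First, for every $u\in\mathcal C=\mathcal C_+(B_R)$ the values of $u$ on $B_{\bar R}$ lie below its values on $B_R\setminus B_{\bar R}$, by the non-decreasing monotonicity, so that $\|u\|_{L^\infty(B_R)}=\|u\|_{L^\infty(B_R\setminus B_{\bar R})}$. Second, I would compare the two energy norms: since $(B_R\setminus B_{\bar R})^c=\{|x|\le\bar R\}\cup\{|x|\ge R\}\supseteq B_R^c$, the domain $\R^{2n}\setminus\big((B_R\setminus B_{\bar R})^c\big)^2$ is contained in $\R^{2n}\setminus(B_R^c)^2$, and non-negativity of the kernel in \eqref{seminorm} gives $[u]_{H^s_{B_R\setminus B_{\bar R},0}}\le[u]_{H^s_{B_R,0}}$; combined with $\|u\|_{L^2(B_R\setminus B_{\bar R})}\le\|u\|_{L^2(B_R)}$ this shows that $u\in H^s_{B_R\setminus B_{\bar R},0}$ and $\|u\|_{H^s_{B_R\setminus B_{\bar R},0}}\le\|u\|_{H^s_{\Omega,0}}$. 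Now \eqref{embLinf} applied with this $\bar R$ yields $\|u\|_{L^\infty(\Omega)}=\|u\|_{L^\infty(B_R\setminus B_{\bar R})}\le C_{\bar R}\|u\|_{H^s_{B_R\setminus B_{\bar R},0}}\le C_{\bar R}\|u\|_{H^s_{\Omega,0}}$, i.e. the assertion with $C=C_{R/2}$.

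I do not expect a genuine obstacle once \eqref{embLinf} is in hand. The two steps that deserve a line of justification are: the identity $\|u\|_{L^\infty(B_R)}=\|u\|_{L^\infty(B_R\setminus B_{\bar R})}$, which is exactly the point where the \emph{non-decreasing} monotonicity — and not merely the radial symmetry — is used; and the monotone behaviour of the seminorm \eqref{seminorm} under shrinking $\Omega$ from $B_R$ to $B_R\setminus B_{\bar R}$, which is immediate from the inclusion of integration domains noted above.
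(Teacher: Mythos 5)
Your proposal is correct and follows essentially the same route as the paper: reduce to the outer annulus $B_R\setminus B_{R/2}$ via the non-decreasing monotonicity (or take $\bar R=R_0$ directly for the annulus) and invoke \eqref{embLinf}. The only addition is your explicit verification that $\|u\|_{H^s_{B_R\setminus B_{\bar R},0}}\le\|u\|_{H^s_{B_R,0}}$ from the inclusion of the integration domains, a step the paper leaves implicit but which is indeed needed and which you justify correctly.
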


\begin{proof} {\it Case $\Omega=B_R$.} In this case, $\mathcal C=\mathcal C_+(B_R)$. Since $u$ is radial and non-decreasing, we have that $\|u\|_{L^\infty(\Omega)}=\|u\|_{L^\infty(B_R\setminus  B_{R/2})}$. Hence, the conclusion follows by \eqref{embLinf}, observing that here $\bar R=R/2>0$.

{\it Case $\Omega=A_{R_0,R}$.} In the annulus, the same proof as before works both for $u\in\mathcal C_+$ and for $u\in\mathcal C_-$. We observe that in this case the constant $C$ depends on $R_0$ (and not on $R$).
\end{proof}

Thanks to the previous lemma, it would be enough to restrict the energy functional to $\mathcal C$ to get $\mathcal C$-constrained critical points; this is the approach in \cite{SerraTilli2011}. Nonetheless, as well explained in \cite{SerraTilli2011}, the cone $\mathcal C$ has empty interior in the $H^s$-topology, as a consequence it does not contain enough test functions to guarantee that constrained critical points are indeed free critical points. In \cite{SerraTilli2011}, the authors prove {\it a posteriori} that the constrained critical point that they find is a weak solution of the problem. In the present paper, we follow a different strategy proposed in \cite{BNW}, which, moreover, allows to cover a wider class of nonlinearities. The technique used relies on the truncation method and, for it, we need to prove a priori estimates for the solutions of \eqref{P} belonging to $\mathcal C$. We start with introducing some more useful notation.
\smallskip

Fix $\delta,\, M>0$ such that 
 \begin{equation}\label{Mdelta}
 f(t)\ge(1+\delta)t\quad\mbox{for all }t\ge M.
 \end{equation}
The existence of $\delta,\, M>0$ follows by $(f_2)$.
We introduce the following set of functions  
\begin{equation}\label{FmdeltaM}
\mathfrak{F}_{M,\delta}:=\left\{g\in C([0,\infty))\,:\,g\ge 0,\quad g(t)\ge(1+\delta)t\mbox{ for all }t\ge M\right\}.
\end{equation}
We remark that $\mathfrak F_{M,\delta}$ depends on $f$ only through $\delta$ and $M$. In the remaining part of this section, we shall derive some a priori estimates which are uniform in $\mathfrak F_{M,\delta}$ and hence depend only on $M$ and $\delta$, and not on the specific function $g$ belonging to $\mathfrak F_{M,\delta}$. This will be useful in the rest of the paper, since we will deal with a truncated function.

We give now the definition of weak solution for a general nonlinear Neumann problem of the form 

\begin{equation}\label{eq}\begin{cases}
(-\Delta)^su +u=g(u) & \mbox{in } \Omega\\
u \ge 0 & \mbox{in } \Omega\\
\mathcal N_s u=0 & \mbox{in } \R^n\setminus \overline \Omega.
\end{cases}
\end{equation}
\begin{definition}\label{def-weak}
We say that a non-negative function $u\in H^s_{\Omega,0}$ is a weak solution of problem \eqref{eq} if for every $v\in H^s_{\Omega,0}$
$$\frac{c_{n,s}}{2}\iint_{\R^{2n}\setminus(\Omega^c)^2}\frac{(u(x)-u(y))(v(x)-v(y))}{|x-y|^{n+2s}}\,dx\,dy + \int_\Omega u\, v\,dx=\int_\Omega  g(u)\,v.$$
\end{definition}

The following Lemma gives an $L^1$ bound for solutions to \eqref{eq} with $g$ belonging to the class $\mathfrak F_{M,\delta}$.

\begin{lemma}\label{L-1}
Let $g$ be any function in $\mathfrak F_{M,\delta}$. 
Then, there exists a constant $K_1=K_1(R,n,M,\delta)>0$ such that any weak solution $u\in \mathcal C$ of \eqref{eq} 
satisfies
$$\|u\|_{L^1(\Omega)}\le K_1.$$
\end{lemma}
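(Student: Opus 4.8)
The plan is to test the weak formulation against the constant function $v\equiv 1$, which is admissible since $\Omega$ is bounded and hence $1\in H^s_{\Omega,0}$. With this choice the double-integral term in Definition~\ref{def-weak} vanishes identically (the integrand contains the factor $v(x)-v(y)=0$), so the weak formulation collapses to the scalar identity
\[
\int_\Omega u\,dx=\int_\Omega g(u)\,dx.
\]
This is the discrete analogue of the compatibility condition \eqref{int-Deltas}, and it is the engine of the whole estimate.

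Next I would split $\Omega$ into the sublevel set $\{u<M\}$ and the superlevel set $\{u\ge M\}$. On $\{u\ge M\}$ the defining property of $\mathfrak F_{M,\delta}$ gives $g(u)\ge(1+\delta)u$, so
\[
\int_{\{u\ge M\}}g(u)\,dx\ge(1+\delta)\int_{\{u\ge M\}}u\,dx.
\]
Combining this with the scalar identity and using $g\ge 0$ everywhere yields
\[
\int_\Omega u\,dx=\int_{\{u<M\}}g(u)\,dx+\int_{\{u\ge M\}}g(u)\,dx\ge(1+\delta)\int_{\{u\ge M\}}u\,dx,
\]
so that, writing $\int_\Omega u = \int_{\{u<M\}}u+\int_{\{u\ge M\}}u$ and rearranging,
\[
\delta\int_{\{u\ge M\}}u\,dx\le \int_{\{u<M\}}u\,dx - \int_{\{u<M\}}g(u)\,dx \le \int_{\{u<M\}}u\,dx\le M|\Omega|.
\]
Hence $\int_{\{u\ge M\}}u\,dx\le M|\Omega|/\delta$. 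Finally
\[
\|u\|_{L^1(\Omega)}=\int_{\{u<M\}}u\,dx+\int_{\{u\ge M\}}u\,dx\le M|\Omega|+\frac{M|\Omega|}{\delta}=M|\Omega|\,\frac{1+\delta}{\delta}=:K_1,
\]
which depends only on $M$, $\delta$, and $|\Omega|$ (hence on $R$, $R_0$, $n$), and in particular not on the specific $g\in\mathfrak F_{M,\delta}$, as required.

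I do not expect any serious obstacle here; the only points that need a word of care are the admissibility of $v\equiv1$ as a test function (immediate from boundedness of $\Omega$, so $1\in L^2(\Omega)$ and $[1]_{H^s_{\Omega,0}}=0$) and the fact that all the integrals split and manipulated above are finite, which follows because $u\in\mathcal C\subset H^s_{\Omega,0}\hookrightarrow L^2(\Omega)\hookrightarrow L^1(\Omega)$ on the bounded set $\Omega$, and $g(u)\in L^1$ because $g(u)=u$ in the weak identity. Note that the monotonicity and radiality built into $\mathcal C$ play no role in this particular lemma — they will matter for the subsequent $L^\infty$ bound — so the argument works verbatim for $\mathcal C_+(\Omega)$ and $\mathcal C_-(A_{R_0,R})$ alike.
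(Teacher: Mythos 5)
Your proof is correct and follows essentially the same route as the paper: test with $v\equiv 1$ to get $\int_\Omega u\,dx=\int_\Omega g(u)\,dx$, split at the level $M$, use $g(t)\ge(1+\delta)t$ on $\{u\ge M\}$ together with $g\ge 0$, and arrive at the same constant $K_1=M|\Omega|(1+1/\delta)$. The only cosmetic difference is that you keep the term $\int_{\{u<M\}}g(u)\,dx$ one step longer before discarding it, which changes nothing.
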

\begin{proof} 
Testing the notion of weak solution with $v\equiv 1$ and using that $g\in\mathfrak F_{M,\delta}$, we get
$$\int_\Omega u \, dx=\int_{\{u< M\}}g(u)\,dx + \int_{\{u\ge M\}}g(u)\,dx  \ge (1+\delta) \int_{\{u\ge M\}}u\, dx.$$
Hence,
$$M|\Omega| \ge  \int_{\{u< M\}}u\, dx \ge \delta \int_{\{u\ge M\}}u\, dx$$
and so
$$\int_\Omega u\,dx=\int_{\{u< M\}}u\,dx+\int_{\{u\ge M\}}u\,dx\le M|\Omega|\left(1+\frac1\delta\right)=:K_1.$$
\end{proof}

The following lemma gives a uniform a priori bound in $L^\infty$ for solutions belonging to the cone $\mathcal C$ of problems \eqref{eq}, with $g\in\mathfrak F_{M,\delta}$. 

\begin{lemma}\label{L-infty-unif}
There exist two positive constants $K_\infty=K_\infty(R_0,R,n,s,M,\delta)$ and $K_2=K_2(R_0,R,n,s,M,\delta)$, such that for any $u\in \mathcal C$ weak solution of problem \eqref{eq}, the following estimates hold:
\[\|u\|_{L^\infty(\Omega)}\leq K_\infty\quad \mbox{and}\quad \|u\|_{H^s_{\Omega,0}}\leq K_2.\]
\end{lemma}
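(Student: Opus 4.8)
The plan is to first establish the uniform $L^\infty$ bound $\|u\|_{L^\infty(\Omega)}\le K_\infty$ by a bootstrap/iteration argument exploiting the monotone radial structure, and then deduce the $H^s_{\Omega,0}$-bound by testing the weak formulation against $u$ itself. The starting point is Lemma \ref{L-1}, which gives $\|u\|_{L^1(\Omega)}\le K_1$ uniformly over $g\in\mathfrak F_{M,\delta}$ and $u\in\mathcal C$. The key geometric observation — this is where the non-decreasing monotonicity in the ball is used — is that for $u\in\mathcal C$ and any $r\in(R_0,R)$ (with $R_0=0$ allowed in the ball case), the value $u(r)$ controls $u$ on the outer shell $\Omega\setminus B_r$ from below: $u(r)|\Omega\setminus B_r|\le \|u\|_{L^1(\Omega)}\le K_1$, hence $u(r)\le K_1/|\Omega\setminus B_r|$. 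Choosing $r=R/2$ (say) gives a pointwise bound $u(R/2)\le C(R,R_0,n)K_1$, and by monotonicity $u\le u(R/2)$ on $B_{R/2}\cap\Omega$. Thus $u$ is already bounded on the inner region; it remains to bound $u$ near the outer boundary $|x|=R$.

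For the outer region, I would combine the radial trace embedding \eqref{embLinf} with an interpolation/iteration. First note that by the $L^1$-bound and the radial $L^\infty$-estimate on annuli $B_R\setminus B_{R/2}$, we cannot close immediately because \eqref{embLinf} involves the full $H^s_{\Omega,0}$-norm, which is what we are trying to bound. Instead, I would set up a Moser–type iteration on the weak formulation \eqref{eq}: for $g\in\mathfrak F_{M,\delta}$, split $g(u)\le C_M + (1+\delta)u\cdot\mathbf 1_{\{u\ge M\}}\le C_M'(1+u)$ only where $g$ has at most linear growth — but $g$ need not; the subtlety is that the a priori estimate must be uniform over all of $\mathfrak F_{M,\delta}$, which contains functions of arbitrary growth above $M$. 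The way around this (following the strategy announced in the text, cf. \cite{BNW,CN}) is: on the set $\{u\ge M\}$ we use the equation together with the sign of $(-\Delta)^s u+u-g(u)=0$ and the monotonicity to show that this set is confined to a shell $\{|x|>r_M\}$ whose inner radius $r_M$ is bounded below uniformly (since $u(r)\le K_1/|\Omega\setminus B_r|< M$ once $r$ is small enough), and on that fixed shell $B_R\setminus B_{r_M}$ apply \eqref{embLinf} after testing the equation with suitable powers $u^\beta$ truncated at height $M$, using that the contribution of $g(u)$ to the energy on $\{u\ge M\}$ has a favourable sign when we subtract off $(1+\delta)u$. This produces a closed estimate $\|u\|_{L^\infty(B_R\setminus B_{r_M})}\le K_\infty$ depending only on $R_0,R,n,s,M,\delta$.

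Once $\|u\|_{L^\infty(\Omega)}\le K_\infty$ is in hand, the energy bound is routine: take $v=u$ in Definition \ref{def-weak} to get
\[
[u]_{H^s_{\Omega,0}}^2+\int_\Omega u^2\,dx=\int_\Omega g(u)\,u\,dx\le \Big(\sup_{[0,K_\infty]}g\Big)\,\|u\|_{L^1(\Omega)}\le C(K_\infty,M,\delta)\,K_1,
\]
where $\sup_{[0,K_\infty]}g$ is controlled uniformly since $g$ is continuous and, on $[0,K_\infty]$, we only need its values below the fixed height $K_\infty$; more precisely one bounds $g(u)u$ using $g(t)\le \max_{[0,M]}|g| + (1+\delta)t$ only on $\{u<M\}$ and directly by $\sup_{[M,K_\infty]}g$ — but this last sup is not uniform in $\mathfrak F_{M,\delta}$, so instead I would argue as in Lemma \ref{L-1}: $\int_\Omega g(u)u\le \int_{\{u<M\}}g(u)M + \int_{\{u\ge M\}}g(u)u$, and re-use the weak formulation with $v=1$ to control $\int_{\{u\ge M\}}g(u)$, then bound $\int_{\{u\ge M\}}g(u)u\le K_\infty\int_{\{u\ge M\}}g(u)\le K_\infty\int_\Omega u\,dx\le K_\infty K_1$. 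This gives $\|u\|_{H^s_{\Omega,0}}\le K_2$ with the stated dependence, and closes the proof.

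\medskip
\emph{Main obstacle.} The delicate point is making the $L^\infty$-bound \emph{uniform over the whole class $\mathfrak F_{M,\delta}$}, whose members can grow arbitrarily fast beyond $M$: one cannot simply invoke subcritical elliptic regularity. The resolution is to use the cone structure twice — once via the $L^1$-bound to trap the superlevel set $\{u\ge M\}$ inside a fixed outer shell bounded away from the origin, and once via the radial $L^\infty$-embedding \eqref{embLinf} on that fixed shell — so that only the behaviour of $g$ on the \emph{bounded} interval $[0,K_\infty]$ ever enters, and even there only through quantities ($\int g(u)$, $\int g(u)u$ on superlevel sets) that are reabsorbed using the equation tested with $v\equiv 1$ and the defining inequality $g(t)\ge(1+\delta)t$ for $t\ge M$.
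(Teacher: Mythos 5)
Your reduction of the $H^s_{\Omega,0}$-bound to the $L^\infty$-bound is correct, and you have correctly identified the one piece of information that tames the arbitrary growth of $g$ above $M$: testing the weak formulation with $v\equiv 1$ gives $\int_\Omega g(u)\,dx=\int_\Omega u\,dx\le K_1$, so that $\int_\Omega g(u)\,u\,dx\le\|u\|_{L^\infty(\Omega)}\|u\|_{L^1(\Omega)}$. The gap is in the $L^\infty$-bound itself, which is the heart of the lemma. Your Moser-type iteration does not close as sketched. First, the sign claim is backwards: $g(t)-(1+\delta)t\ge0$ for $t\ge M$ is a \emph{lower} bound on $g$, so on $\{u\ge M\}$ it makes $\int g(u)u^\beta$ larger, not smaller; it cannot be used to absorb that term. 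Second, truncating the test functions at height $M$ can only ever produce information at level $M$, while untruncated powers $u^\beta$ with $\beta>1$ put $\int_\Omega g(u)u^\beta\,dx$ on the right-hand side, which is not controllable uniformly over $\mathfrak F_{M,\delta}$ from the sole knowledge that $\|g(u)\|_{L^1(\Omega)}\le K_1$ ($L^1$ right-hand sides are far too weak for $L^\infty$ regularity of $(-\Delta)^s$). Trapping $\{u\ge M\}$ in an outer shell does not help, since the shell $B_R\setminus B_{R/2}$ is already where Lemma \ref{L-infty} lives.

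The step you are missing is that one need not bound $\|u\|_{H^s_{\Omega,0}}$ \emph{before} invoking the embedding: keeping one factor of $\|u\|_{L^\infty(\Omega)}$ on the right produces a quadratic-versus-linear inequality in $\|u\|_{L^\infty(\Omega)}$ alone. Indeed, testing \eqref{eq} with $u$ itself and using Lemma \ref{L-infty},
\begin{equation*}
\|u\|^2_{L^\infty(\Omega)}\le C^2\,\|u\|_{H^s_{\Omega,0}}^2= C^2\int_\Omega g(u)\,u\,dx\le C^2\,\|u\|_{L^\infty(\Omega)}\int_\Omega g(u)\,dx= C^2\,\|u\|_{L^\infty(\Omega)}\,\|u\|_{L^1(\Omega)},
\end{equation*}
and dividing by $\|u\|_{L^\infty(\Omega)}$ gives $\|u\|_{L^\infty(\Omega)}\le C^2K_1=:K_\infty$ by Lemma \ref{L-1}; the same chain then yields $\|u\|_{H^s_{\Omega,0}}^2\le \|u\|_{L^\infty(\Omega)}\|u\|_{L^1(\Omega)}\le C^2K_1^2$. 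No iteration, no localization of the superlevel set, and no pointwise bound on the inner region are needed.
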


\begin{proof}
Choosing again $v \equiv 1$ in the definition of weak solution, we have
\begin{equation}\label{identity}
\int_\Omega u\,dx=\int_\Omega g(u)\,dx.
\end{equation}
On the other hand,  testing the equation with $u$ itself and using Lemma \ref{L-infty}, we deduce
\begin{equation}\label{chain}
\begin{split}
\|u\|^2_{L^\infty(\Omega)}&\le C^2 \left(\iint_{\R^{2n}\setminus(\Omega^c)^2}\frac{|u(x)-u(y)|^2}{|x-y|^{n+2s}}\,dx\,dy +\int_\Omega u^2\,dx\right) \\
&=C^2\int_\Omega g(u)\,u\,dx\leq C^2\|u\|_{L^\infty(\Omega)}\int_\Omega g(u)\,dx.
\end{split}\end{equation}

Combining \eqref{identity} with the previous estimate, we conclude that
$$\|u\|_{L^\infty(\Omega)}\leq C^2 \|u\|_{L^1(\Omega)}\leq C^2K_1=:K_\infty,$$
where the last estimate comes from Lemma \ref{L-1}.
Finally, this bound on $\|u\|_{L^\infty(\Omega)}$ combined with inequality \eqref{chain} above, gives the following uniform bound on $\|u\|_{H^s_{\Omega,0}}$:
$$\|u\|_{H^s_{\Omega,0}}^2\le \|u\|_{L^\infty(\Omega)}\int_\Omega g(u)\,dx=\|u\|_{L^\infty(\Omega)}\|u\|_{L^1(\Omega)}\le C^2K_1^2=:K_2^2.$$
\end{proof}

We now prove a regularity result for weak solutions of \eqref{P} belonging to the cone $\mathcal C$.

\begin{lemma}\label{regularity}
Let $u\in \mathcal C$ be a weak solution of \eqref{P}. Then $u\in C^{2}(\Omega)\cap L^\infty(\mathbb R^n)$. 
\end{lemma}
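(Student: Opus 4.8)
The plan is to bootstrap from the a priori $L^\infty$ bound (Lemma~\ref{L-infty-unif}) to full $C^2$ regularity, following the standard iterative scheme for the fractional Laplacian adapted to the nonlocal Neumann setting. First I would observe that, since $u\in\mathcal C$ is a weak solution of \eqref{P}, it is in particular a weak solution of the linear problem \eqref{Plin} with right-hand side $h:=f(u)-u\in L^\infty(\Omega)\subset L^2(\Omega)$; here we use Lemma~\ref{L-infty-unif} to know $u\in L^\infty(\Omega)$, and $(f_0)$ together with the regularity of $f$ to control $f(u)$. The next point is that a weak solution of the Neumann problem is, in the interior of $\Omega$, a distributional (hence, by \cite{Silvestre}, a pointwise) solution of the equation $(-\Delta)^s u + u = h$ in $\Omega$, where now $(-\Delta)^s$ is the \emph{full} fractional Laplacian on $\mathbb R^n$; this is legitimate because $u\in H^s_{\Omega,0}\subset\mathcal L_s$ (by the Lemma proved above) so the singular integral \eqref{FL} makes sense, and because test functions compactly supported in $\Omega$ do not see the Neumann term $\mathcal N_s u$.

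Once the problem is reduced to an interior equation $(-\Delta)^s u = h - u =: \tilde h$ in $\Omega$ with $\tilde h\in L^\infty(\Omega)$, I would apply interior regularity theory for the fractional Laplacian: by the Hölder estimates of Ros-Oton--Serra / Caffarelli--Silvestre type, $\tilde h\in L^\infty_{\mathrm{loc}}(\Omega)$ gives $u\in C^{2s-\epsilon}_{\mathrm{loc}}(\Omega)$ if $2s\le 1$, or $u\in C^{1,2s-1-\epsilon}_{\mathrm{loc}}(\Omega)$ when $2s>1$ (which is our case $s>1/2$). Then I bootstrap: since $f\in C^{1,\gamma}$, the composition $f(u)$ gains the regularity of $u$, so $\tilde h = f(u)-2u$ (being careful: $\tilde h$ should be $f(u)-u-u$; more precisely the interior equation is $(-\Delta)^s u = f(u)-2u$, wait — it is $(-\Delta)^s u + u = f(u)$, so $(-\Delta)^s u = f(u)-u$) inherits the Hölder regularity just obtained for $u$, and the Schauder-type estimates for $(-\Delta)^s$ upgrade $u$ to $C^{2s+\alpha}_{\mathrm{loc}}$, etc. Iterating finitely many times (each step adding $2s>1$ to the order of differentiability, up to non-integer thresholds handled in the usual way) reaches $u\in C^2_{\mathrm{loc}}(\Omega)$, and since $\Omega=B_R$ or $A_{R_0,R}$ is fixed and $u$ is radial and bounded, the interior estimates are in fact uniform up to... actually $C^2$ is claimed on all of $\mathbb R^n$, so I also need regularity \emph{outside} $\overline\Omega$.

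For the exterior regularity, I would use the Neumann condition directly: for $x\in\mathbb R^n\setminus\overline\Omega$, the equation $\mathcal N_s u(x)=0$ reads $u(x)\int_\Omega|x-y|^{-n-2s}dy = \int_\Omega u(y)|x-y|^{-n-2s}dy$, i.e.
\[
u(x) = \frac{\displaystyle\int_\Omega u(y)|x-y|^{-n-2s}\,dy}{\displaystyle\int_\Omega |x-y|^{-n-2s}\,dy}\qquad\text{for }x\in\mathbb R^n\setminus\overline\Omega.
\]
Since $\operatorname{dist}(x,\Omega)>0$ on this region and $u\in L^1(\Omega)$ (Lemma~\ref{L-1}) — indeed $u\in L^\infty(\Omega)$ — both numerator and denominator are smooth functions of $x$ (differentiation under the integral sign is justified by the uniform bounds on $|x-y|^{-n-2s}$ and its derivatives away from $\Omega$, and the positivity of the denominator rules out any singularity), so $u\in C^\infty(\mathbb R^n\setminus\overline\Omega)$. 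Finally, matching the interior $C^2$ regularity with the exterior $C^\infty$ regularity across $\partial\Omega$ requires continuity of $u$ and its derivatives up to the boundary from both sides; I would invoke boundary regularity for the fractional Neumann problem (the kind established in \cite{DROV} or derivable from the representation of $(-\Delta)^s$ under homogeneous Neumann conditions as a regional operator with log-type kernel, cf.\ \cite{A}), together with the radial monotone structure which, as in the proof of Lemma~\ref{L-infty}, gives control near the boundary. \textbf{The main obstacle} I anticipate is precisely this last point — the regularity of $u$ up to and across $\partial\Omega$, i.e.\ promoting interior $C^2_{\mathrm{loc}}(\Omega)$ to $C^2(\overline\Omega)$ and matching it with the exterior behaviour — since the standard fractional interior estimates degenerate at the boundary and the Neumann problem's boundary regularity theory is more delicate than the Dirichlet one; the cone structure ($u$ radial, monotone, bounded) and the explicit exterior representation above are the tools I would lean on to push through.
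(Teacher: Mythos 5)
Your interior--exterior decomposition runs into exactly the obstacle you flag at the end, and that obstacle is not a technicality: the lemma asserts $u\in C^{2}(\mathbb R^n)$, so the regularity \emph{across} $\partial\Omega$ is the entire content of the global statement. Your argument, as written, delivers $u\in C^2_{\mathrm{loc}}(\Omega)\cap C^\infty(\mathbb R^n\setminus\overline\Omega)$ together with an unproved appeal to ``boundary regularity for the fractional Neumann problem''; no such matching theorem is available in the paper, and neither the radial monotone structure nor the representation of the operator via a regional kernel with logarithmic weight (cf.\ \cite{A}) is shown to produce $C^2$ compatibility of the two pieces at $\partial\Omega$. The interior reduction and the exterior representation formula are both correct and are indeed ingredients of the paper's argument, but the proposal does not close the proof.

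The paper avoids the interior/exterior split altogether. It uses the exterior formula you wrote down only to read off the pointwise bound $|u(x)|\le\|u\|_{L^\infty(\Omega)}$ for $x\in\mathbb R^n\setminus\overline\Omega$ (the right-hand side is an average of $u|_\Omega$ against a positive kernel), which combined with Lemma \ref{L-infty-unif} gives $u\in L^\infty(\mathbb R^n)$. With $w:=f(u)-u\in L^\infty(\mathbb R^n)$ it then invokes \cite[Proposition 2.9]{Silvestre} to obtain $u\in C^{1,\alpha}(\mathbb R^n)$ for every $\alpha\in(0,2s-1)$, and, since $f\in C^{1,\gamma}$, a Schauder-type bootstrap via \cite[Proposition 2.8]{Silvestre} (here $2s>1$ is used so that each step gains more than one order) yields $C^2$. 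In short: establish global boundedness on all of $\mathbb R^n$ first, then run the regularity bootstrap globally, rather than proving separate interior and exterior regularity and attempting to glue them along $\partial\Omega$.
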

\begin{proof}
By Lemma \ref{L-infty-unif} we know that $u\in L^\infty(\Omega)$. 
Furthermore, by the nonlocal Neumann boundary conditions, we have that 
$$
u(x)=\frac{\displaystyle{\int_\Omega}\frac{u(y)}{|x-y|^{n+2s}}dy}{\displaystyle{\int_\Omega}\frac{1}{|x-y|^{n+2s}}dy}\quad\mbox{for every }x\in \mathbb R^n\setminus\overline\Omega.
$$
Thus, for any $\varepsilon>0$ we get for every $x\in \mathbb R^n\setminus\Omega_\varepsilon:=\{x\in\mathbb R^n\,:\, \mathrm{dist}(x,\Omega)\ge\varepsilon\}$
$$
u(x)=\frac{\displaystyle{\int_\Omega}\frac{u(y)}{|x-y|^{n+2s}}dy}{\displaystyle{\int_\Omega}\frac{1}{|x-y|^{n+2s}}dy}\le \|u\|_{L^\infty(\Omega)}\frac{\displaystyle{\int_\Omega}\frac{1}{|x-y|^{n+2s}}dy}{\displaystyle{\int_\Omega}\frac{1}{|x-y|^{n+2s}}dy}=\|u\|_{L^\infty(\Omega)}.
$$
Therefore, being this estimate uniform in $\varepsilon$, we get $|u(x)|\le \|u\|_{L^\infty(\Omega)}$ for every $x\in \mathbb R^n\setminus\overline\Omega$. 
Hence, $u\in L^\infty(\mathbb R^n)$ and so, using \cite[Proposition 2.9]{Silvestre} with $w=f(u)-u\in L^\infty(\mathbb R^n)$, we obtain $u\in C^{1,\alpha}(\Omega)$ for every $\alpha\in (0,2s-1)$. Then, recalling that $f\in C^{1,\gamma}$, we can use a bootstrap argument, and apply \cite[Proposition 2.8]{Silvestre} to conclude the proof. 
\end{proof}

\section{Existence of a mountain pass radial solution}\label{sec:mountain_pass_existece}
In this section we prove the existence of a radial solution of \eqref{P} via a Mountain Pass-type Theorem. 
We are now ready to start the truncation method described in the Introduction: we will modify $f$ in $(K_\infty,+\infty)$, where $K_\infty$ is the $L^\infty$ bound given in Lemma~\ref{L-infty-unif}, in such a way to have a subcritical nonlinearity $\tilde f$.\smallskip

\begin{lemma}\label{truncated} For every $\ell\in(2,2_s^*)$, there exists $\tilde{f}\in \mathfrak F_{M,\delta}\cap C^1([0,\infty))$, satisfying $(f_0)$--$(f_3)$, 
 \begin{equation}\label{subcritical}
\lim_{t\to\infty}\frac{\tilde{f}(t)}{t^{\ell-1}}=1,
\end{equation}
and with the property that if $u\in\mathcal C$ solves 
\begin{equation}\label{tildeP}\begin{cases}(-\Delta)^s u+u=\tilde{f}(u)\quad&\mbox{in }\Omega,\\
u>0&\mbox{in }\Omega,\\
\mathcal N_s u=0&\mbox{in }\R^n\setminus \overline \Omega,
\end{cases}
\end{equation}
then $u$ solves \eqref{P}.  
\end{lemma}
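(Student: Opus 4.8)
The plan is to construct $\tilde f$ by keeping $f$ unchanged on a large interval $[0,T]$ (with $T>K_\infty$) and gluing it smoothly to the model power $t^{\ell-1}$ for large $t$. Concretely, I would fix $T_2>T_1>K_\infty$ and set $\tilde f = f$ on $[0,T_1]$, $\tilde f(t)=t^{\ell-1}$ for $t\ge T_2$, and on $[T_1,T_2]$ interpolate using a $C^1$ cutoff $\chi$ so that $\tilde f(t)=(1-\chi(t))f(t)+\chi(t)t^{\ell-1}$; one then checks that the interpolation parameters can be chosen so that the gluing is $C^1$ and monotone-compatible. Since $f\in C^{1,\gamma}$, this $\tilde f$ is at least $C^1([0,\infty))$, and \eqref{subcritical} holds by construction. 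Property $(f_0)$ (nonnegativity and monotonicity of $\tilde f$) is preserved provided $T_1$ is large enough that $f$ is already increasing and positive there (true by $(f_2)$ and $(f_0)$), and provided the cutoff is chosen monotone — a standard one-variable estimate. Conditions $(f_1)$ and $(f_3)$ concern only the behavior of $f$ near $0$ and near the fixed point(s) $u_{0,i}$, all of which lie in $[0,K_\infty]\subset[0,T_1]$ where $\tilde f\equiv f$, so they are automatically inherited (one should note $u_{0,i}\le K_\infty$, which follows since these constants are solutions in $\mathcal C$, or simply choose $T_1$ larger than all of them). Finally $\tilde f\in\mathfrak F_{M,\delta}$: on $[0,T_1]$ this is inherited from $f$ via \eqref{Mdelta}, on $[T_2,\infty)$ one has $t^{\ell-1}\ge(1+\delta)t$ for $t$ large (enlarging $T_2$ if necessary since $\ell>2$), and on the gluing interval $\tilde f$ is a convex-type combination of two functions each $\ge(1+\delta)t$ there, hence $\ge(1+\delta)t$ as well.

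**The key point — solutions in $\mathcal C$ of \eqref{tildeP} solve \eqref{P}.** This is where the a priori bounds do the work. Suppose $u\in\mathcal C$ solves \eqref{tildeP}. Then $u$ is in particular a weak solution of \eqref{eq} with $g=\tilde f\in\mathfrak F_{M,\delta}$, so Lemma~\ref{L-infty-unif} applies and gives $\|u\|_{L^\infty(\Omega)}\le K_\infty$. By construction $\tilde f=f$ on $[0,K_\infty]$ (choosing $T_1\ge K_\infty$, indeed $T_1>K_\infty$), hence $\tilde f(u(x))=f(u(x))$ for a.e.\ $x\in\Omega$, and therefore $u$ is a weak solution of \eqref{P}. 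The strict positivity $u>0$ a.e.\ in $\Omega$ required in \eqref{tildeP} is exactly what lets us invoke the equation; alternatively it follows a posteriori from the strong maximum principle (Theorem~\ref{MaxPrinc}) once we know $u\in C^2$ via Lemma~\ref{regularity}.

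**Main obstacle.** The only real subtlety is the simultaneous bookkeeping on the gluing interval $[T_1,T_2]$: one must verify that the constants $T_1,T_2$ and the cutoff can be chosen so that \emph{all} of the requirements — $C^1$ regularity, $\tilde f\ge 0$, $\tilde f'\ge 0$, and the lower bound $\tilde f(t)\ge(1+\delta)t$ — hold simultaneously, together with the asymptotics \eqref{subcritical}. This is genuinely elementary but requires care: monotonicity of $\tilde f$ on the transition region is the most delicate, and the cleanest route is to note that both $f$ and $t\mapsto t^{\ell-1}$ are nondecreasing there, while a nondecreasing $C^1$ cutoff $\chi$ with $\chi'\ge 0$ yields $\tilde f'=(1-\chi)f'+\chi(\ell-1)t^{\ell-2}+\chi'(t^{\ell-1}-f(t))$; on $[T_1,T_2]$ with $T_1$ large the term $t^{\ell-1}-f(t)$ is positive (since $f$ is eventually dominated by any superlinear power, by $(f_2)$ one only needs $f(t)\le(1+\delta')t\le t^{\ell-1}$ is false — rather one picks $T_1$ so that $f(t)\le t^{\ell-1}$ there, which holds because $f$ has at most the growth permitted and $\ell>2$; if $f$ itself were to exceed $t^{\ell-1}$ one instead glues from above, the argument being symmetric), so all three terms are nonnegative and $\tilde f'\ge0$. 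Everything else is a direct consequence of the a priori estimates already proved.
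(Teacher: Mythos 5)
Your overall strategy is the right one and coincides with the construction the paper relies on (the paper gives no proof of its own here, deferring to \cite[Lemma 4.3]{BNW}): keep $f$ untouched on $[0,T_1]$ with $T_1>K_\infty$, impose the model growth $t^{\ell-1}$ at infinity, and then observe that any $u\in\mathcal C$ solving \eqref{tildeP} falls under Lemma \ref{L-infty-unif} (because $\tilde f\in\mathfrak F_{M,\delta}$ and the bound $K_\infty$ is uniform over that class), hence satisfies $0\le u\le K_\infty<T_1$, so that $\tilde f(u)=f(u)$ a.e. and $u$ solves \eqref{P}. This second half of your argument is complete and correct, and it is the part that actually carries the weight of the lemma.

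The gap is in the gluing, exactly where you sensed trouble. Your main justification for $\tilde f'\ge 0$ on $[T_1,T_2]$ is that $f(t)\le t^{\ell-1}$ there ``because $f$ has at most the growth permitted''; this is false in the very situation the lemma is designed for, since $f$ may be Sobolev-supercritical (e.g. $f(t)=t^{q-1}$ with $q>2_s^*>\ell$), in which case $f(t)\gg t^{\ell-1}$ on $[T_1,T_2]$ once $T_1$ is large. Then the term $\chi'(t)\bigl(t^{\ell-1}-f(t)\bigr)$ in your expression for $\tilde f'$ is large and negative, and it cannot be absorbed by $(1-\chi)f'$ near $t=T_2$, where $1-\chi\to 0$ while $\chi'$ must stay of size $(T_2-T_1)^{-1}$ on average; so the convex combination $(1-\chi)f+\chi\,t^{\ell-1}$ is in general \emph{not} non-decreasing. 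The remark that one ``glues from above, the argument being symmetric'' is not a proof, because the natural target from above is no longer $t^{\ell-1}$ itself. The repair is elementary but should be made explicit: either interpolate with $t\mapsto t^{\ell-1}+K$ for a constant $K$ so large that $t^{\ell-1}+K\ge f(t)$ on $[T_1,T_2]$ (all three terms of $\tilde f'$ are then non-negative, and \eqref{subcritical} is unaffected), or, more cleanly, prescribe the derivative directly by setting $\tilde f(t)=f(T_1)+\int_{T_1}^{t}m(\tau)\,d\tau$ for $t\ge T_1$, with $m\ge 0$ continuous, $m(T_1)=f'(T_1)$ and $m(\tau)=(\ell-1)\tau^{\ell-2}$ for $\tau\ge T_2$, so that monotonicity and $C^1$ regularity are automatic. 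With either repair, the verification that $\tilde f(t)\ge(1+\delta)t$ for $t\ge M$ persists on $[T_1,\infty)$ still requires the small amount of slack you allude to (e.g. fixing $\delta$ at the outset so that $f(t)\ge(1+2\delta)t$ for $t\ge M$, which $(f_2)$ permits); the remaining verifications of $(f_1)$, $(f_3)$ and of $\tilde f\in C^1$ are inherited exactly as you say.
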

For the proof of the above lemma, we refer the reader to \cite[Lemma 4.3]{BNW}.

As a consequence of the previous lemma, condition $(f_1)$, and the regularity of $f$, there exists $C>0$ for which 
\begin{equation}\label{crescitasottocritica}
\tilde f(t)\le C (1+t^{\ell-1})\quad\mbox{for all }t\ge0,
\end{equation}
where $\ell\in(2,2_s^*)$.

From now on in the paper, we consider the trivial extension of $\tilde f$, still denoted with the same symbol 
$$\tilde f= \begin{cases} \tilde f\quad&\mbox{in }[0,+\infty),\\
0&\mbox{in }(-\infty, 0).
\end{cases}$$

Recalling the Definition \ref{def-weak} of weak solution (applied here with $g=\tilde f$)  one can easily see that weak solutions of problem \eqref{tildeP} can be found as critical points of the following energy functional defined on the space $H^s_{\Omega,0}$:
\begin{equation}\label{energy}
\mathcal E(u):=\frac{c_{n,s}}{4}\iint_{\R^{2n}\setminus (\Omega^c)^2}\frac{|u(x)-u(y)|^2}{|x-y|^{n+2s}}\,dx\,dy +\frac{1}{2}\int_\Omega u^2\,dx -\int_\Omega \tilde F(u)\,dx,
\end{equation}
where $\tilde F(t):=\int_0^t \tilde f(\tau)d\tau.$
The proof of this fact follows from the argument in the proof of Proposition 3.7 in \cite{DROV}, with the obvious modifications due the presence of the nonlinearity $\tilde f$.

Because of \eqref{subcritical} and the Sobolev embedding, the functional $\mathcal E$ is well defined and of class $C^2$, being $s>1/2$.  

\begin{lemma}[\textbf{Palais-Smale condition}]\label{PalaisSmale} The functional $\mathcal E$ satisfies the Palais-Smale condition, i.e. every \sl{(PS)}-sequence $(u_k)\subset H^s_{\Omega,0}$, namely a sequence satisfying
$$(\mathcal E(u_k)) \mbox{ is bounded}\quad\mbox{ and }\quad \mathcal E'(u_k)\to 0\mbox{ in }(H^s_{\Omega,0})^*,$$ 
admits a convergent subsequence.
\end{lemma}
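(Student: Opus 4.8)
The plan is to follow the standard Palais--Smale verification for subcritical functionals, adapted to the nonlocal Neumann setting. First I would show that any (PS)-sequence $(u_k)$ is bounded in $H^s_{\Omega,0}$. The subcritical growth \eqref{subcritical} (equivalently \eqref{crescitasottocritica}) and assumption $(f_2)$ give an Ambrosetti--Rabinowitz-type control: since $\tilde f(t)\ge(1+\delta)t$ for $t\ge M$, one obtains $\tilde F(t)\ge \frac{1+\delta}{2}t^2 - C_0$ for $t\ge 0$, and more importantly a bound of the form $\tilde f(t)t - \theta \tilde F(t) \ge -C_1$ for some $\theta>2$ (this requires checking that the truncated nonlinearity, which behaves like $t^{\ell-1}$ at infinity, satisfies an AR condition with $\theta=\ell$; near the origin $(f_1)$ keeps everything bounded). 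Then the usual combination
\begin{equation*}
\mathcal E(u_k) - \frac{1}{\theta}\langle \mathcal E'(u_k), u_k\rangle \ge \left(\frac12 - \frac1\theta\right)\|u_k\|_{H^s_{\Omega,0}}^2 - C_2,
\end{equation*}
together with $\mathcal E(u_k)$ bounded and $\|\mathcal E'(u_k)\|_{(H^s_{\Omega,0})^*}\|u_k\|_{H^s_{\Omega,0}} = o(1)\|u_k\|_{H^s_{\Omega,0}}$, forces $(\|u_k\|_{H^s_{\Omega,0}})$ to be bounded.

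Next, I would extract a weakly convergent subsequence $u_k \rightharpoonup u$ in $H^s_{\Omega,0}$. By Proposition~\ref{embedding} the embedding $H^s_{\Omega,0}\hookrightarrow L^q(\Omega)$ is compact for every $q\in[1,2_s^*)$, so (up to a further subsequence) $u_k\to u$ strongly in $L^q(\Omega)$ for a suitable $q$ matched to $\ell$ (e.g.\ take $q=\ell$, and also $q$ conjugate exponents as needed) and $u_k\to u$ a.e. The growth bound \eqref{crescitasottocritica} and a standard argument (Vitali / continuity of Nemytskii operators in $L^p$ spaces) then give $\int_\Omega \tilde f(u_k)(u_k-u)\,dx \to 0$.

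Finally I would upgrade weak convergence to strong convergence. Writing the bilinear form associated to the seminorm \eqref{seminorm} as $\mathcal B(u,v)$, we have
\begin{equation*}
\langle \mathcal E'(u_k), u_k - u\rangle = \mathcal B(u_k, u_k-u) + \int_\Omega u_k(u_k-u)\,dx - \int_\Omega \tilde f(u_k)(u_k-u)\,dx.
\end{equation*}
The left-hand side tends to $0$ because $\mathcal E'(u_k)\to 0$ in the dual and $(u_k-u)$ is bounded; the last integral on the right tends to $0$ by the previous paragraph; the $L^2$ term tends to $0$ by compactness. Hence $\mathcal B(u_k,u_k-u)\to 0$. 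Since also $\mathcal B(u,u_k-u)\to 0$ by weak convergence, we get $\mathcal B(u_k-u,u_k-u)\to 0$, i.e.\ $[u_k-u]_{H^s_{\Omega,0}}\to 0$; combined with $\|u_k-u\|_{L^2(\Omega)}\to 0$ this yields $u_k\to u$ strongly in $H^s_{\Omega,0}$.

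The only genuinely delicate point is the first step: one must be sure the \emph{truncated} nonlinearity $\tilde f$ satisfies a global Ambrosetti--Rabinowitz condition (or at least produces the needed coercivity of $\mathcal E - \frac1\theta\langle\mathcal E',\cdot\rangle$) — this is plausible because $\tilde f(t)\sim t^{\ell-1}$ at infinity with $\ell>2$ and $\tilde f$ is bounded near $0$ by $(f_1)$, but it should be justified carefully (possibly invoking the explicit construction of $\tilde f$ from \cite[Lemma 4.3]{BNW}, or arguing directly that $t\mapsto \tilde F(t)/t^2$ is eventually increasing). Everything after that is routine once Proposition~\ref{embedding} is in hand.
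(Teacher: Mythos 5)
Your proposal is correct and follows essentially the same route as the paper: an Ambrosetti--Rabinowitz-type inequality for the truncated nonlinearity (which the paper also obtains from \eqref{subcritical}, citing \cite[Lemma 3.3]{CN}) gives boundedness, then compact embedding plus the subcritical growth \eqref{crescitasottocritica} kills the nonlinear and $L^2$ terms in $\mathcal E'(u_k)[u_k-u]$, leaving the bilinear form to vanish. The only (immaterial) difference is the last step: you conclude via bilinearity that $\mathcal B(u_k-u,u_k-u)\to0$, whereas the paper uses the inequality $a^2+2b(b-a)\ge b^2$ together with weak lower semicontinuity to get convergence of the seminorms and then invokes weak convergence plus norm convergence.
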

\begin{proof}
Reasoning as in \cite[Lemma 3.3]{CN}, as a consequence of \eqref{subcritical},
there exist $\mu\in(2,\ell]$ and $T_0>0$ such that 
\begin{equation}\label{AmbRa}
\tilde{f}(t)t\ge\mu\tilde{F}(t)\quad\mbox{ for all }t\ge T_0.
\end{equation}
Now, let $(u_k)\subset H^s_{\Omega,0}$ be a (PS)-sequence for $\mathcal E$ as in the statement. We estimate
$$
\begin{aligned}
\mathcal E(u_k)-\frac1\mu \mathcal E'(u_k)[u_k]\ge &\frac{c_{n,s}}{2} \left(\frac12-\frac1\mu\right)\|u_k\|_{H^s_{\Omega,0}}^2\\
&+\int_{\{u_k\le T_0\}}\left(\frac1\mu\tilde{f}(u_k)u_k-\tilde F(u_k)\right)dx
\end{aligned}
$$
and, being $(u_k)$ a (PS)-sequence,  
$$
\mathcal E(u_k)-\frac1\mu \mathcal E'(u_k)[u_k]\le |\mathcal E(u_k)|+\frac1\mu \|\mathcal E'(u_k)\|_*\|u_k\|_{H^s_{\Omega,0}}\le C(1+\|u_k\|_{H^s_{\Omega,0}})
$$
for some $C>0$, where we have denoted by $\|\cdot\|_*$ the norm of the dual space of $H^s_{\Omega,0}$. Since we know that $\int_{\{u_k\le T_0\}}\left(\frac1\mu\tilde{f}(u_k)u_k-\tilde F(u_k)\right)dx$ is uniformly bounded in $k$, we get 
$$\left(\frac12-\frac1\mu\right)\|u_k\|_{H^s_{\Omega,0}}^2\le C(1+\|u_k\|_{H^s_{\Omega,0}}).$$
Therefore, $(u_k)$ is bounded in $H^s_{\Omega,0}$ and so there exists $u\in H^s_{\Omega,0}$ such that $u_k\rightharpoonup u$ in $H^s_{\Omega,0}$, up to a subsequence. By compact embedding (Proposition\ti\ref{embedding}), $u_k\to u$ in $L^\ell(\Omega)$ and, up to a subsequence, $u_k\to u$ a.e. in $\Omega$.
Again, since $(u_k)$ is a (PS)-sequence
\begin{equation}\label{E'to0}
|\mathcal E'(u_k)[u_k-u]|\le \|\mathcal E'(u_k)\|_{*}\|u_k-u\|_{H^s_{\Omega,0}}\to 0\quad \mbox{as }k\to\infty.
\end{equation}
On the other hand, by H\"older's inequality and \eqref{crescitasottocritica},
\begin{equation}\label{int-ftilde}
\begin{aligned}
\int_\Omega \tilde f(u_k)(u_k-u)\,dx &\le C\int_\Omega(1+u_k^{\ell-1})(u_k-u)dx\\
&\le C\|1+u_k\|^{\ell-1}_{L^\ell(\Omega)}\|u_k-u\|_{L^\ell(\Omega)}\to 0\quad\mbox{as }k\to\infty
\end{aligned}
\end{equation} 
and 
\begin{equation}\label{int-uk-u}
\int_\Omega u_k(u_k-u)\,dx=\int_\Omega (u_k-u)^2\,dx+\int_\Omega u(u_k-u)\,dx\to0 \quad\mbox{as }k\to\infty.
\end{equation}
Recalling that  
$$
\begin{aligned}
\mathcal E'(u_k)&[u_k-u]\\
&=\frac{c_{n,s}}{2}\iint_{\R^{2n}\setminus(\Omega^c)^2}\frac{(u_k(x)-u_k(y))[(u_k-u)(x)-(u_k-u)(y)]}{|x-y|^{n+2s}}\,dx\,dy\\
&\phantom{=}+\int_\Omega u_k(u_k-u)\,dx -\int_\Omega \tilde f(u_k)(u_k-u)\,dx,
\end{aligned}
$$
by \eqref{E'to0}, we have in view of \eqref{int-ftilde} and \eqref{int-uk-u} 
\begin{equation}\label{convergence0}
\lim_{k\to\infty}\iint_{\R^{2n}\setminus(\Omega^c)^2}\frac{(u_k(x)-u_k(y))[(u_k-u)(x)-(u_k-u)(y)]}{|x-y|^{n+2s}}\,dx\,dy=0.
\end{equation}
We claim that \eqref{convergence0} implies the following
\begin{equation}\label{conv-norms}
\lim_{k\to\infty}\iint_{\R^{2n}\setminus(\Omega^c)^2}\frac{|u_k(x)-u_k(y)|^2}{|x-y|^{n+2s}}\,dx\,dy= \iint_{\R^{2n}\setminus(\Omega^c)^2}\frac{|u(x)-u(y)|^2}{|x-y|^{n+2s}}\,dx\,dy.
\end{equation}
Indeed, by weak lower semicontinuity 
\begin{equation}\label{wlsc}
\iint_{\R^{2n}\setminus(\Omega^c)^2}\frac{|u(x)-u(y)|^2}{|x-y|^{n+2s}}\,dx\,dy\le \liminf_{k\to\infty}\iint_{\R^{2n}\setminus(\Omega^c)^2}\frac{|u_k(x)-u_k(y)|^2}{|x-y|^{n+2s}}\,dx\,dy.
\end{equation}
Moreover, setting
$$a:=u(x)-u(y)\quad\mbox{and}\quad b:= u_k(x)-u_k(y),$$
using the easy inequality $a^2+2b(b-a)\ge b^2$, we deduce
$$
\begin{aligned}
&\iint_{\R^{2n}\setminus(\Omega^c)^2}\frac{|u(x)-u(y)|^2}{|x-y|^{n+2s}}\,dx\,dy\\
&\hspace{1cm}+2\iint_{\R^{2n}\setminus(\Omega^c)^2}\frac{(u_k(x)-u_k(y))(u_k(x)-u_k(y)-u(x)+u(y))}{|x-y|^{n+2s}}\,dx\,dy\\
&\hspace{1cm}\ge \iint_{\R^{2n}\setminus(\Omega^c)^2}\frac{|u_k(x)-u_k(y)|^2}{|x-y|^{n+2s}}\,dx\,dy.
\end{aligned}
$$
Thus, by \eqref{convergence0}, we obtain
$$
\iint_{\R^{2n}\setminus(\Omega^c)^2}\frac{|u(x)-u(y)|^2}{|x-y|^{n+2s}}\,dx\,dy\ge \limsup_{k\to\infty}\iint_{\R^{2n}\setminus(\Omega^c)^2}\frac{|u_k(x)-u_k(y)|^2}{|x-y|^{n+2s}}\,dx\,dy,
$$
which, together with \eqref{wlsc}, proves the claim.
Combining \eqref{conv-norms} with the convergence of $L^2$ norms $\|u_k\|_{L^2(\Omega)}^2\to\|u\|_{L^2(\Omega)}^2$, we get 
$$
\|u_k\|_{H^s_{\Omega,0}}\to\|u\|_{H^s_{\Omega,0}}.
$$
Finally, since we also have weak convergence $u_k\rightharpoonup u$ in $H^s_{\Omega,0}$, we conclude that $u_k\to u$ in $H^s_{\Omega,0}$.
\end{proof}

\begin{remark} We observe that, as already noticed in \cite[Remark 4.13]{BNW}, the truncation method (cf. Lemma \ref{truncated}) and the preliminary a priori estimate (cf. Lemma \ref{L-infty-unif}) are needed to get the subcritical growth of the nonlinearity \eqref{crescitasottocritica} and the Ambrosetti-Rabinowitz condition \eqref{AmbRa}. If the original nonlinearity $f$ of problem \eqref{P} satisfies those further assumptions, it is possible to skip the first part concerning a priori estimates and truncation, and to prove directly the existence of both a non-decreasing and a non-increasing (also for the ball) solutions, just starting from Lemma \ref{PalaisSmale} with $\tilde f=f$. 
\end{remark}

We define 
\begin{equation}\label{u-u+}
\begin{aligned}
u_-&:= \sup \{t \in [0,u_0)\,:\, \tilde f(t)=t\},\\
u_+&:= \inf \{t \in (u_0,+\infty) \,:\, \tilde f(t)=t\}.
\end{aligned}
\end{equation}
Since $\tilde f$ is a truncation of $f$, using Lemma \ref{truncated} and the properties satisfied by $f$, we have that $\tilde f(u_0)=u_0$ and $\tilde f'(u_0)>0$, so that $u_0$ is an isolated 
zero of the function $\tilde f(t)-t$. Hence, 
\begin{equation}\label{u-+}
u_-\neq u_0\quad\mbox{ and }\quad u_+\neq u_0.
\end{equation} 
We point out that $u_+= +\infty$ is possible. Next, in order to localize the solutions, as already explained in the Introduction, we define the restricted cones 
\begin{equation*}
\begin{aligned}
\mathcal C_{+,*}&:= \{u \in \mathcal C_+\::\: \text{$u_- \le  u \le u_+$ in $\Omega$}\},\\
\mathcal C_{-,*}&:= \{u \in \mathcal C_-\::\: \text{$u_- \le  u \le u_+$ in $A_{R_0,R}$}\}.
\end{aligned}
\end{equation*} 
As for $\mathcal C$, when it will not be relevant to distinguish between the two cones $\mathcal C_{+,*}$ and $\mathcal C_{-,*}$, we will simply denote by $\mathcal C_*$ either of them
\begin{equation}\label{Cstar}
\mathcal C_*:= \{u \in \mathcal C\::\: \text{$u_- \le  u \le u_+$ in $\Omega$}\}.
\end{equation}

Clearly, $\mathcal C_*$ is closed and convex. 

\begin{corollary}\label{conseqPS} Let $c\in\mathbb R$ be such that $\mathcal E'(u)\neq 0$ for all $u\in \mathcal C_*$ with $\mathcal E(u)=c$. Then, there exist two positive constants $\bar\varepsilon$ and $\bar\delta$ such that the following inequality holds  
$$
\|\mathcal E'(u)\|_*\ge\bar\delta\quad	\mbox{for all }u\in \mathcal C_*\mbox{ with }|\mathcal E(u)-c|\le 2\bar\varepsilon.
$$
\end{corollary}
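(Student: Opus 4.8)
The plan is to argue by contradiction, exploiting the Palais--Smale condition (Lemma~\ref{PalaisSmale}) together with the closedness of $\mathcal C_*$. Suppose the conclusion fails. Then for every $n\in\mathbb N$, applying the statement with $\bar\varepsilon=\bar\delta=1/n$ produces a point $u_n\in\mathcal C_*$ with $|\mathcal E(u_n)-c|\le 2/n$ and $\|\mathcal E'(u_n)\|_*<1/n$. In particular $\mathcal E(u_n)\to c$ and $\mathcal E'(u_n)\to 0$ in $(H^s_{\Omega,0})^*$, so $(u_n)$ is a $(PS)$-sequence for $\mathcal E$ on the whole space $H^s_{\Omega,0}$ (the functional $\mathcal E$ and its derivative are the genuine ones on $H^s_{\Omega,0}$, we have merely chosen the sequence inside $\mathcal C_*$).

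Next I would invoke Lemma~\ref{PalaisSmale}: up to a subsequence, $u_n\to \bar u$ strongly in $H^s_{\Omega,0}$ for some $\bar u\in H^s_{\Omega,0}$. Since each $u_n$ lies in $\mathcal C_*$ and $\mathcal C_*$ is closed in the $H^s$-topology (it is the intersection of the closed convex cone $\mathcal C$ with the closed set $\{u: u_-\le u\le u_+ \text{ in }\Omega\}$, closedness of the latter following from the continuity of the embedding $H^s_{\Omega,0}\hookrightarrow L^2(\Omega)$ and a.e.\ convergence along a further subsequence), the limit satisfies $\bar u\in\mathcal C_*$. By continuity of $\mathcal E$ and of $\mathcal E'$ (here one uses that $\mathcal E\in C^1$, in fact $C^2$, which holds because of the subcritical growth \eqref{subcritical} and $s>1/2$), we get $\mathcal E(\bar u)=\lim_n \mathcal E(u_n)=c$ and $\mathcal E'(\bar u)=\lim_n \mathcal E'(u_n)=0$ in $(H^s_{\Omega,0})^*$.

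But $\bar u\in\mathcal C_*$ with $\mathcal E(\bar u)=c$ and $\mathcal E'(\bar u)=0$ directly contradicts the hypothesis that $\mathcal E'(u)\neq 0$ for all $u\in\mathcal C_*$ with $\mathcal E(u)=c$. Hence the assumed failure is impossible, and the claimed $\bar\varepsilon,\bar\delta>0$ exist.

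The only genuinely delicate point is making sure that the contradiction hypothesis really yields an honest $(PS)$-sequence on the full space rather than some constrained notion: this is immediate here because $\mathcal E$ is defined and $C^2$ on all of $H^s_{\Omega,0}$ and we use the unconstrained derivative $\mathcal E'$ throughout. Everything else is soft: the Palais--Smale compactness does the heavy lifting, and the passage to the limit only requires the closedness of $\mathcal C_*$ and the continuity of $\mathcal E$ and $\mathcal E'$, both already available. A minor bookkeeping remark is that one should extract subsequences in the right order (first from $(PS)$ compactness, then a further one for a.e.\ convergence to confirm $u_-\le\bar u\le u_+$), but this is routine.
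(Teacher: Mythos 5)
Your argument is correct and coincides with the paper's own proof: a contradiction argument producing a Palais--Smale sequence in $\mathcal C_*$, strong convergence via Lemma \ref{PalaisSmale}, closedness of $\mathcal C_*$, and continuity of $\mathcal E$ and $\mathcal E'$ to reach a contradiction with the hypothesis. No gaps; the extra remarks about extracting subsequences and verifying $u_-\le \bar u\le u_+$ are routine and consistent with what the paper leaves implicit.
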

\begin{proof} The proof follows by Lemma \ref{PalaisSmale}. Indeed, suppose by contradiction that the thesis does not hold, then we can find a sequence $(u_k)\subset \mathcal C_*$ such that $\|\mathcal E'(u_k)\|_*<\frac1k$ and $c-\frac1k \le \mathcal E(u_k)\le c+\frac1k$ for all $k$. Hence, $(u_k)$ is a Palais-Smale sequence, and  since $\mathcal E$ satisfies the Palais-Smale condition, up to a subsequence, $u_k\to u$ in $H^s_{\Omega,0}$. Since $(u_k)\subset \mathcal C_*$ and $\mathcal C_*$ is closed, $u\in \mathcal C_*$. The fact that $\mathcal E$ is of class $C^1$ then gives $\mathcal E(u_k)\to c=\mathcal E(u)$ and $\mathcal E'(u_k)\to 0=\mathcal E'(u)$, which contradicts the hypothesis.
\end{proof}

We define the operator $T:(H^s_{\Omega,0})^*\to H^s_{\Omega,0}$ as 
\begin{equation}\label{T}T(h)=v, \quad\mbox{where $v$ solves }
\quad(P_h)\;\begin{cases}(-\Delta)^s v+v=h\quad&\mbox{in }\Omega,\\
\mathcal N_s v=0&\mbox{in }\R\setminus\overline \Omega.
\end{cases}
\end{equation}
The associated energy of $(P_h)$, given by \eqref{energyh}, is strictly convex, coercive and weakly lower semicontinuous, hence problem $(P_h)$ admits a unique weak solution $v\in H^s_{\Omega,0}$, which is a minimizer of the energy. Hence, the definition of $T$ is well posed and 
\begin{equation}\label{eq:T_continuous}
T\in C((H^s_{\Omega,0})^*;H^s_{\Omega,0}),
\end{equation}
(see for instance the proof Theorem 3.9 in \cite{DROV}).

We introduce also the operator
\begin{equation}\label{eq:tildeT_def}
\tilde T:H^s_{\Omega,0}\to H^s_{\Omega,0} \quad\textrm{defined by}\quad \tilde T(u)=T(\tilde f(u)),
\end{equation}
with $T$ given in \eqref{T}. Being $\ell<2_s^*$, $u\in H^s_{\Omega,0}$ implies $u\in L^\ell(\Omega)$. Hence, by \eqref{crescitasottocritica}, $\tilde f(u)\in L^{\ell'}(\Omega)\subset (H^s_{\Omega,0})^*$ and $\tilde T$ is well defined. 

\begin{proposition}\label{Ttildecompact}
The operator $\tilde T$ is compact, i.e. it maps bounded subsets of $H^s_{\Omega,0}$ into precompact subsets of $H^s_{\Omega,0}$. 
\end{proposition}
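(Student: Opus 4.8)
The plan is to factor the compactness of $\tilde T$ through the compact embedding $H^s_{\Omega,0}\hookrightarrow\hookrightarrow L^\ell(\Omega)$ from Proposition~\ref{embedding} and the continuity \eqref{eq:T_continuous} of the linear solution operator $T$. So let $(u_k)\subset H^s_{\Omega,0}$ be a bounded sequence; the goal is to exhibit a subsequence along which $\tilde T(u_k)$ converges strongly in $H^s_{\Omega,0}$. First I would invoke Proposition~\ref{embedding}: up to a subsequence (not relabelled), $u_k\rightharpoonup u$ in $H^s_{\Omega,0}$ and $u_k\to u$ strongly in $L^\ell(\Omega)$; in particular $u_k\to u$ a.e.\ in $\Omega$ and, after passing to a further subsequence, there is a function $w\in L^\ell(\Omega)$ with $|u_k|\le w$ a.e.\ in $\Omega$ for every $k$.

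The core step is to show that $\tilde f(u_k)\to \tilde f(u)$ in $L^{\ell'}(\Omega)$, with $\ell'=\ell/(\ell-1)$. By the subcritical growth bound \eqref{crescitasottocritica} and the elementary inequality $(1+t^{\ell-1})^{\ell'}\le 2^{\ell'-1}(1+t^{(\ell-1)\ell'})$, together with $(\ell-1)\ell'=\ell$, one gets $|\tilde f(u_k)|^{\ell'}\le C(1+w^\ell)\in L^1(\Omega)$, uniformly in $k$; moreover $\tilde f(u_k)\to\tilde f(u)$ a.e.\ in $\Omega$ since $\tilde f$ is continuous on $\R$ (recall $\tilde f\equiv 0$ on $(-\infty,0)$ and $\tilde f(0)=0$). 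The dominated convergence theorem then gives $\tilde f(u_k)\to\tilde f(u)$ in $L^{\ell'}(\Omega)$. Since $H^s_{\Omega,0}\hookrightarrow L^\ell(\Omega)$ continuously, the dual inclusion $L^{\ell'}(\Omega)\hookrightarrow (H^s_{\Omega,0})^*$ is continuous as well, so $\tilde f(u_k)\to\tilde f(u)$ also in $(H^s_{\Omega,0})^*$.

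Finally, applying the continuity \eqref{eq:T_continuous} of $T:(H^s_{\Omega,0})^*\to H^s_{\Omega,0}$ yields $\tilde T(u_k)=T(\tilde f(u_k))\to T(\tilde f(u))=\tilde T(u)$ strongly in $H^s_{\Omega,0}$. Thus every bounded sequence in $H^s_{\Omega,0}$ has a subsequence whose image under $\tilde T$ converges, which is exactly the statement that $\tilde T$ maps bounded subsets into precompact subsets, i.e.\ $\tilde T$ is compact. The only point requiring mild care is the extraction of the dominating function $w$ and the resulting application of dominated convergence to upgrade the a.e.\ convergence of $\tilde f(u_k)$ to strong $L^{\ell'}$-convergence; beyond that the argument is a direct concatenation of already-established facts, and I do not anticipate a genuine obstacle.
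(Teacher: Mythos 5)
Your proof is correct and is essentially the standard argument that the paper itself invokes by deferring to \cite[Proposition 3.2]{CN}: compact embedding into $L^\ell(\Omega)$, dominated convergence for the Nemytskii map $u\mapsto\tilde f(u)$ into $L^{\ell'}(\Omega)\hookrightarrow (H^s_{\Omega,0})^*$, and continuity of $T$. The only cosmetic point is that the a.e.\ convergence of $u_k$ already requires passing to the same further subsequence you extract for the dominating function $w$.
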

\noindent The proof of the previous proposition is the same as for \cite[Proposition 3.2]{CN} with the obvious changes due to the different space we are working in, so we omit it. \smallskip

In the following lemma we prove that the operator $\tilde T$ preserves the cone $\mathcal C_*$, which in turn will be useful, in Lemma \ref{deformation}, to build a deformation that preserves the cone. As mentioned in the Introduction, this is crucial to guarantee existence of a minimax solution in $\mathcal C_*$.

\begin{lemma}\label{cononelcono}
The operator $\tilde T$ defined in \eqref{eq:tildeT_def} satisfies
$\tilde T(\mathcal C_*)\subseteq \mathcal C_*$.
\end{lemma}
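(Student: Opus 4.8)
The plan is to fix $u\in\mathcal C_*$, set $h:=\tilde f(u)$ and $v:=\tilde T(u)=T(h)$, and verify for $v$ the four conditions defining $\mathcal C_*$: that $v$ is radial, that $v\ge0$ in $\R^n$, that $v$ is monotone on $[R_0,R]$ in the same sense as the elements of the cone, and that $u_-\le v\le u_+$ in $\Omega$. The first observation is that $h$ inherits good structure from $u$: since $\tilde f$ satisfies $(f_0)$ (Lemma~\ref{truncated}) it is nonnegative and non-decreasing, and $\tilde f(u_-)=u_-$, $\tilde f(u_+)=u_+$ whenever $u_+<+\infty$ (by \eqref{u-u+} and \eqref{u-+}); hence, as $u\in\mathcal C_*$, the function $h$ is radial, nonnegative in $\R^n$, monotone on $[R_0,R]$ in the same sense as $u$, and satisfies $u_-\le h\le u_+$ in $\Omega$. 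Recall also that, by the paragraph following \eqref{T}, the energy \eqref{energyh} of $(P_h)$ is strictly convex, coercive and weakly lower semicontinuous, so that $v=T(h)$ is its \emph{unique} minimizer over $H^s_{\Omega,0}$; this is the fact we shall exploit.

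Radiality of $v$ follows at once from uniqueness: for every rotation $\mathcal R$ about the origin, $\Omega$, $(-\Delta)^s$ and $\mathcal N_s$ are $\mathcal R$-invariant and $h\circ\mathcal R=h$, so $v\circ\mathcal R$ solves $(P_h)$ as well, whence $v\circ\mathcal R=v$. For the two-sided bound I would use a weak maximum principle in the spirit of Theorem~\ref{MaxPrinc}. The constant functions $u_-$ and (when $u_+<+\infty$) $u_+$ lie in $H^s_{\Omega,0}$ and weakly solve $(-\Delta)^sc+c=c$ with $\mathcal N_sc=0$; since $u_-\le h\le u_+$ in $\Omega$, the functions $w_1:=v-u_-$ and $w_2:=u_+-v$ then weakly satisfy $(-\Delta)^sw_i+w_i\ge0$ in $\Omega$ with homogeneous nonlocal Neumann condition. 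Testing with $-w_i^-\in H^s_{\Omega,0}$, using the elementary inequality $(a-b)(b^--a^-)\ge(a^--b^-)^2$ pointwise on $\R^{2n}\setminus(\Omega^c)^2$ and $\int_\Omega w_i\,(-w_i^-)\,dx=\|w_i^-\|_{L^2(\Omega)}^2$, one obtains $[w_i^-]_{H^s_{\Omega,0}}^2+\|w_i^-\|_{L^2(\Omega)}^2\le0$, hence $w_i^-\equiv0$ in $\R^n$; that is $u_-\le v\le u_+$ in $\Omega$, and in particular $v\ge u_-\ge0$ in $\R^n$.

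It remains to show that $v$ is monotone on $[R_0,R]$ in the right sense; this is the crux of the lemma, and where the radial geometry of $\Omega$ is used. I would argue variationally. Let $v^\sharp$ be the radial function on $\Omega$, equimeasurable with $v|_\Omega$, that is non-decreasing on $[R_0,R]$ when $u\in\mathcal C_{+,*}$ (resp.\ non-increasing when $u\in\mathcal C_{-,*}$) -- that is, the monotone rearrangement of $r\mapsto v(r)$ with respect to $r^{n-1}\,dr$ -- extended to $\R^n$ by the nonlocal harmonic extension forced by $\mathcal N_s=0$, so that $v^\sharp\in H^s_{\Omega,0}$ with $[v^\sharp]_{H^s_{\Omega,0}}$ the least seminorm among extensions of $v^\sharp|_\Omega$. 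Then $\|v^\sharp\|_{L^2(\Omega)}=\|v\|_{L^2(\Omega)}$ by equimeasurability, and $\int_\Omega h\,v^\sharp\ge\int_\Omega h\,v$ by the Hardy--Littlewood inequality, since $h$ is radial and monotone in the same sense. Granting the P\'olya--Szeg\H{o}-type inequality $[v^\sharp]_{H^s_{\Omega,0}}\le[v]_{H^s_{\Omega,0}}$ for the seminorm \eqref{seminorm}, the energy \eqref{energyh} of $(P_h)$ satisfies $\mathcal E(v^\sharp)\le\mathcal E(v)$; since $v$ is the unique minimizer of this energy over $H^s_{\Omega,0}$ and $v^\sharp\in H^s_{\Omega,0}$, necessarily $v^\sharp=v$, so $v$ is monotone on $[R_0,R]$ in the required sense, i.e.\ $v\in\mathcal C_*$. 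The very same reasoning, replacing ``non-decreasing'' by ``non-increasing'' throughout, gives $\tilde T(\mathcal C_{-,*})\subseteq\mathcal C_{-,*}$.

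I expect the main obstacle to be precisely the P\'olya--Szeg\H{o}-type inequality for the nonlocal Neumann seminorm \eqref{seminorm}. One cannot simply quote the standard fractional P\'olya--Szeg\H{o} inequality on $\R^n$: the seminorm \eqref{seminorm} integrates only over $\R^{2n}\setminus(\Omega^c)^2$, and hence also involves the values of $v$ outside $\Omega$, and the two-point rearrangements (reflections across hyperplanes not through the origin) on which such inequalities rest do not preserve $\Omega$. Here one exploits crucially that $\Omega$ is a ball or an annulus \emph{centered at the origin}: one should either carry out the rearrangement by a sequence of polarizations adapted to $\Omega$ and verify the inequality on the $(\sigma,\sigma)$-invariant pieces into which $\R^{2n}\setminus(\Omega^c)^2$ decomposes (equivalently, work with the reduced seminorm obtained after the harmonic extension), or reduce the radial problem to a one-dimensional integrodifferential problem on $(R_0,R)$ and use a one-dimensional rearrangement inequality for the resulting radial kernel. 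For $\Omega=B_R$ the admissible direction of rearrangement is the \emph{non-decreasing} one that defines $\mathcal C_+$, in accordance with the role played by this monotonicity in Lemma~\ref{L-infty} in preventing concentration at the origin.
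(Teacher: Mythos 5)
Your treatment of radiality (uniqueness plus rotation invariance), of the sign of $v$, and of the two-sided bound $u_-\le v\le u_+$ (testing $(-\Delta)^s(v-u_-)+(v-u_-)=\tilde f(u)-\tilde f(u_-)\ge0$ with the negative part) is correct and essentially coincides with what the paper does. The problem is the monotonicity step, which you yourself identify as the crux: your argument rests on a P\'olya--Szeg\H{o}-type inequality $[v^\sharp]_{H^s_{\Omega,0}}\le[v]_{H^s_{\Omega,0}}$ for the monotone \emph{non-decreasing} radial rearrangement and for the Neumann seminorm \eqref{seminorm}, and you only ``grant'' it. This is a genuine gap, not a routine verification. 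The classical fractional P\'olya--Szeg\H{o} inequality concerns the symmetric \emph{decreasing} rearrangement on $\R^n$ and the full Gagliardo seminorm; here the rearrangement pushes mass toward $\partial B_R$, the integration is over $\R^{2n}\setminus(\Omega^c)^2$, and the exterior values of $v$ are not free but determined by $\mathcal N_s v=0$ (so comparing $v$ with the least-seminorm extension of $v^\sharp|_\Omega$ does not reduce to any standard statement). The polarizations that generate the non-decreasing radial rearrangement are reflections across hyperplanes not through the origin, which do not preserve $\Omega$, so the usual two-point rearrangement machinery does not apply as stated. Without a proof of this inequality the lemma is not established.

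The paper avoids rearrangement entirely and proves monotonicity by a direct comparison/test-function argument that uses the equation satisfied by $v$, not just its variational characterization. Fix $r\in(R_0,R)$ and observe that it suffices to show that for each such $r$ either $(a)$ $v(t)\le v(r)$ for all $t<r$ or $(b)$ $v(t)\ge v(r)$ for all $t>r$ (failure of both at some $r$ contradicts continuity). If $\tilde f(u(r))\le v(r)$, test the equation for $v$ on the annulus $B_r\setminus B_{R_0}$ with $\varphi_+:=(v-v(r))^+\chi_{\{R_0<|x|\le r\}}$; since $\tilde f(u)$ is non-decreasing, $\tilde f(u(x))\le\tilde f(u(r))\le v(r)$ there, and the resulting inequality forces $\varphi_+\equiv0$, i.e.\ $(a)$. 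If instead $\tilde f(u(r))>v(r)$, the symmetric test with $(v-v(r))^-$ on $\{|x|>r\}$ gives $(b)$. Note that this argument uses the monotonicity of $x\mapsto\tilde f(u(x))$, i.e.\ that $u\in\mathcal C_*$ and $\tilde f$ satisfies $(f_0)$ --- exactly the structure you isolated at the start --- but it requires no rearrangement inequality. If you want to salvage your variational route you would have to actually prove the P\'olya--Szeg\H{o} inequality for \eqref{seminorm}; as it stands, the test-function argument is the step you are missing.
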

\begin{proof} We first note that $u\in\mathcal C_*$ implies $\tilde f(u)\in\mathcal C$, by the properties of $\tilde f$. Now, let $u\in\mathcal C_*$ and $v:=\tilde T(u)$. 
We see that $v\ge0$ in $\Omega$. Indeed, denoting by $v^+$ the positive part of $v$, by an easy observation we have that $|v^+(x)-v^+(y)|\le |v(x)-v(y)|$, and hence $\mathcal E(v^+)\le \mathcal E(v)$. Furthermore, due to uniqueness, $v$ is radial. 
For the monotonicity, we distinguish the two cases. 

{\it Case $u\in \mathcal C_{+,*}.$} In this case, we have to prove that $v$ is non-decreasing. It is enough to show that for every $r\in(R_0,R)$ one of the following cases occurs:
\begin{itemize}
\item[$(a)$] $v(t)\le v(r)$ for all $t\in(R_0,r)$, 
\item[$(b)$] $v(t)\ge v(r)$ for all $t\in(r,R)$. 
\end{itemize}
Indeed, if $v(\bar t)>v(r)$ for some $R_0<\bar t<r$, by the continuity of $v$, there exists $t\in(\bar t,r)$ for which $v(\bar t)>v(t)> v(r)$ which violates both $(a)$ and $(b)$.   
Now, we fix $r\in(R_0,R)$. If $\tilde f(u(r))\le v(r)$, we consider the test function
$$
\varphi_+(x):=\begin{cases}(v(|x|)-v(r))^+\quad&\mbox{if }R_0<|x|\le r,\\
0&\mbox{otherwise}. 
\end{cases}
$$

We have
\begin{equation}\label{4.16}
\begin{split}
&\iint_{\R^{2n}\setminus \left((B_r\setminus B_{R_0})^c\right)^2} \frac{(v(x)-v(y))(\varphi_+(x)-\varphi_+(y))}{|x-y|^{n+2s}}dx\,dy \\
&\hspace{5.6cm} +\int_{B_r\setminus B_{R_0}}v(x)\varphi_+(x)dx \\
&\hspace{1em} =\int_{B_r\setminus B_{R_0}}\tilde f(u(x))\varphi_+(x)dx \le \tilde f(u(r))\int_{B_r\setminus B_{R_0}}\varphi_+(x)dx\\
&\hspace{1em} \le v(r)\int_{B_r\setminus B_{R_0}}\varphi_+(x)dx.
\end{split}
\end{equation}

Using again the definition of $\varphi_+$, we obtain 
\begin{equation}\label{4.17}
\begin{aligned}
\iint_{\R^{2n}\setminus \left((B_r\setminus B_{R_0})^c\right)^2} &\frac{(v(x)-v(y))(\varphi_+(x)-\varphi_+(y))}{|x-y|^{n+2s}}dx\,dy
\\
&\ge \iint_{\R^{2n}\setminus \left((B_r\setminus B_{R_0})^c\right)^2} \frac{|\varphi_+(x)-\varphi_+(y)|^2}{|x-y|^{n+2s}}dx\,dy.
\end{aligned}
\end{equation}
Hence, by \eqref{4.16} and \eqref{4.17}
\begin{equation*}
\begin{split}
0&\ge\\
 &\iint_{\R^{2n}\setminus \left((B_r\setminus B_{R_0})^c\right)^2} \frac{|\varphi_+(x)-\varphi_+(y)|^2}{|x-y|^{n+2s}}dx\,dy + \int_{B_r\setminus B_{R_0}}(v(x)-v(r))\varphi_+(x)dx\\
&=\iint_{\R^{2n}\setminus \left((B_r\setminus B_{R_0})^c\right)^2} \frac{|\varphi_+(x)-\varphi_+(y)|^2}{|x-y|^{n+2s}}dx\,dy + \int_{B_r\setminus B_{R_0}}|\varphi_+(x)|^2dx,
\end{split}
\end{equation*}
which gives $\varphi_+ \equiv 0$, i.e. (a) holds.

Analogously, if $\tilde f(u(r)) > v(r)$, we consider the test function
$$
\varphi_-(x):=\begin{cases}0\quad&\mbox{if }R_0<|x|\le r,\\
(v(|x|)-v(r))^-&\mbox{otherwise} 
\end{cases}$$ 
and we prove that $(b)$ holds. Therefore, we have proved that $v$ is nondecreasing. 

{\it Case $u\in \mathcal C_{-,*}.$} In this case, we know that $u\in \mathcal C_{-,*}$ and  have to prove that $v$ is non-increasing. The proof is the same as for $\mathcal C_{+,*}$ changing the roles of  $\varphi_+$ and $\varphi_-$.
\smallskip

It remains to show that $u_-\le v\le u_+$. By the fact that $\tilde{f}(u_-)= u_-$ and that $\tilde f$ is non-decreasing we get 
$$(-\Delta)^s (v-u_-)+(v-u_-)=\tilde f(u)-\tilde f(u_-)\ge0.$$
Multiplying the equation above by $(v-u_-)^-$, integrating it over $\Omega$, and using that for any $g$ one has that
$-|g^-(x)-g^-(y)|^2\ge (g(x)-g(y))(g^-(x)-g^-(y))$, we get 
$$
\iint_{\R^{2n}\setminus(\Omega^c)^2}\frac{|(v-u_-)^-(x)-(v-u_-)^-(y)|^2}{|x-y|^{n+2s}}dx\,dy + \int_\Omega (v-u_-)(v-u_-)^-dx \le 0,$$
that is $(v-u_-)^-\equiv 0$ in $\Omega$. 
In a similar way, we prove that $v\le u^+$ in $\Omega$ (if $u^+<+\infty$).
\end{proof}

\begin{remark} In what follows, we will use indifferently the quantities $\mathcal E'(u)$, $\nabla \mathcal E(u)$ and $u-\tilde{T}(u)$. 
Below, we write explicitly the relations among these three objects.
Given $\mathcal E': H^s_{\Omega,0}\to (H^s_{\Omega,0})^*$, the differential of $\mathcal E$, for every $u\in H^s_{\Omega,0}$, we denote by $\nabla \mathcal E(u)$ the only function of $H^s_{\Omega,0}$ (whose existence is guaranteed by Riesz's Representation Theorem) such that 
$$
(\nabla \mathcal E(u),v)_{H^s_{\Omega,0}} = \mathcal E'(u)[v]\quad\mbox{for all }v\in H^s_{\Omega,0},
$$ 
where $(\cdot,\cdot)_{H^s_{\Omega,0}}$ is the scalar product defined in Section \ref{sec2.0}. In particular, $\|\nabla \mathcal E(u)\|_{H^s_{\Omega,0}}=\|\mathcal E'(u)\|_{*}$, $\|\cdot\|_*$ being the norm in the dual space $(H^s_{\Omega,0})^*$.
Now, by the definition \eqref{eq:tildeT_def} of the operator $\tilde T$, we know that, for every $u\in H^s_{\Omega,0}$, $\tilde T(u)=v$, where $v\in H^s_{\Omega,0}$ is the unique solution of $(-\Delta)^sv+v=\tilde f(u)$ in $\Omega$, under nonlocal Neumann boundary conditions. Therefore, for every $u,\,v\in H^s_{\Omega,0}$, it results
$$
\begin{aligned}
\big(u&-\tilde T(u),v\big)_{H^s_{\Omega,0}}=\\
&=\frac{c_{n,s}}{2}\iint_{\mathbb R^{2n}\setminus(\Omega^c)^2}\frac{(u(x)-u(y))(v(x)-v(y))}{|x-y|^{n+2s}}dx\,dy+\int_{\Omega}uv\,dx\\
&\phantom{=}-\frac{c_{n,s}}{2}\iint_{\mathbb R^{2n}\setminus(\Omega^c)^2}\frac{(\tilde T(u(x))-\tilde T(u(y)))(v(x)-v(y))}{|x-y|^{n+2s}}dx\,dy-\int_{\Omega}\tilde T(u)v\,dx\\
&=\frac{c_{n,s}}{2}\iint_{\mathbb R^{2n}\setminus(\Omega^c)^2}\frac{(u(x)-u(y))(v(x)-v(y))}{|x-y|^{n+2s}}dx\,dy+\int_{\Omega}(u-\tilde f(u))v\,dx\\
&
=\mathcal E'(u)[v].
\end{aligned}
$$
In conclusion, $u-\tilde T(u)=\nabla \mathcal E(u)$ for every $u\in H^s_{\Omega,0}$.
\end{remark}

\begin{lemma}[\textbf{Deformation Lemma in $\mathcal C_*$}]\label{deformation} Let $c\in\mathbb R$ be such that $\mathcal E'(u)\neq 0$ for all $u\in \mathcal C_*$, with $\mathcal E(u)=c$. Then, there exists a function $\eta:\mathcal C_*\to\mathcal C_*$ satisfying the following properties: 
\begin{itemize}
\item[(i)] $\eta$ is continuous with respect to the topology of $H^s_{\Omega,0}$;
\item[(ii)] $\mathcal E(\eta(u))\le \mathcal E(u)$ for all $u\in\mathcal C_*$;
\item[(iii)] $\mathcal E(\eta(u))\le c-\bar\varepsilon$ for all $u\in\mathcal C_*$ such that $|\mathcal E(u)-c|<\bar\varepsilon$;
\item[(iv)] $\eta(u)=u$ for all $u\in\mathcal C_*$ such that $|\mathcal E(u)-c|>2\bar\varepsilon$,
\end{itemize}
where $\bar\varepsilon$ is the positive constant corresponding to $c$ given in Corollary \ref{conseqPS}.
\end{lemma}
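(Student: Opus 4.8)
The plan is to construct $\eta$ as the time-one map of a (discretized) pseudo-gradient flow for $\mathcal E$, carefully designed so that it never leaves the cone $\mathcal C_*$. The natural negative-gradient direction at $u$ is $-\nabla\mathcal E(u) = \tilde T(u)-u$, and the key structural fact, already established in Lemma \ref{cononelcono}, is that $\tilde T(\mathcal C_*)\subseteq\mathcal C_*$. Since $\mathcal C_*$ is convex (and closed), for $u\in\mathcal C_*$ the whole segment $\{(1-t)u + t\tilde T(u) : t\in[0,1]\}$ stays in $\mathcal C_*$. So the first step is to define, for a small step size, a map that moves $u$ a little way along this segment: something like $u\mapsto u + t(u)\bigl(\tilde T(u)-u\bigr)$ where $t(u)\in[0,1]$ is a cutoff function, equal to $0$ when $|\mathcal E(u)-c|>2\bar\varepsilon$ (to get (iv)) and bounded away from $0$ on $\{|\mathcal E(u)-c|\le\bar\varepsilon\}$. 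I would first verify that a single such step decreases $\mathcal E$: using that $\mathcal E$ is $C^2$ with (by the Palais--Smale bound, Corollary \ref{conseqPS}) $\|\nabla\mathcal E(u)\|_*\ge\bar\delta$ on the relevant sublevel region, one gets
\begin{equation*}
\mathcal E\bigl(u + t(\tilde T(u)-u)\bigr) \le \mathcal E(u) - t\,\|\nabla\mathcal E(u)\|_{H^s_{\Omega,0}}^2 + o(t)\|\nabla\mathcal E(u)\|^2,
\end{equation*}
so for $t$ small enough (uniformly, by the a priori $H^s$-bound of Lemma \ref{L-infty-unif} which confines $\mathcal C_*$ to a bounded set where $\mathcal E''$ is controlled) this is a genuine strict decrease. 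Iterating finitely many such steps — each staying in $\mathcal C_*$ by convexity — and choosing the number of iterations and the step size in terms of $\bar\varepsilon$ and $\bar\delta$ yields the quantitative drop by $\bar\varepsilon$ required in (iii), while (ii) and (iv) follow directly from the construction, and (i) follows from continuity of $\tilde T$ (equation \eqref{eq:T_continuous}), of $\tilde f$, and of the cutoff $t(\cdot)$ (which can be taken Lipschitz in $\mathcal E(u)$, hence continuous in $u$).

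More concretely, I would fix $\bar\varepsilon,\bar\delta$ from Corollary \ref{conseqPS}, choose a locally Lipschitz cutoff $\chi:\R\to[0,1]$ with $\chi\equiv 1$ on $[-\bar\varepsilon,\bar\varepsilon]$ and $\chi\equiv 0$ outside $(-2\bar\varepsilon,2\bar\varepsilon)$, set $g(u):=\chi(\mathcal E(u)-c)$, and define the one-step map $\Phi_\tau(u):=u+\tau g(u)\bigl(\tilde T(u)-u\bigr)$ for a small parameter $\tau>0$ to be chosen. Then $\Phi_\tau$ maps $\mathcal C_*$ into itself (convexity plus $\tilde T(\mathcal C_*)\subseteq\mathcal C_*$ plus $\tau g(u)\in[0,1]$ once $\tau\le 1$), is continuous, equals the identity where $|\mathcal E(u)-c|\ge 2\bar\varepsilon$, and on $\{|\mathcal E(u)-c|<2\bar\varepsilon\}$ satisfies $\mathcal E(\Phi_\tau(u))\le\mathcal E(u)-\tfrac{\tau}{2}g(u)\|\nabla\mathcal E(u)\|^2$ for $\tau$ small (using $C^2$-ness and the uniform $H^s$-bound on $\mathcal C_*$). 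Finally I set $\eta:=\Phi_\tau^{\circ m}$ (the $m$-fold composition) with $m$ chosen so that $m\cdot\tfrac{\tau}{2}\bar\delta^2 \ge \bar\varepsilon$; one checks that along the iteration the value $\mathcal E$ stays within the active band long enough (or has already dropped below $c-\bar\varepsilon$, in which case later steps do no harm by (ii)) so that (iii) holds. Properties (i), (ii), (iv) are inherited by finite composition.

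The main obstacle — and the only genuinely delicate point — is the bookkeeping in (iii): ensuring that enough of the $m$ iterates actually see $g(u)=1$ (equivalently, that the orbit does not "escape" the band $\{|\mathcal E(u)-c|<\bar\varepsilon\}$ upward before the cumulative decrease reaches $\bar\varepsilon$). The standard way around this is the usual deformation-lemma argument: if at some step $\mathcal E$ has already fallen below $c-\bar\varepsilon$, stop decreasing (monotonicity (ii) preserves this); if not, then $\mathcal E(u)\in[c-\bar\varepsilon, c+2\bar\varepsilon)$ throughout, so one needs the decrease per step to be uniform there — but on $[c-\bar\varepsilon,c+\bar\varepsilon]$ we have $g=1$ and $\|\nabla\mathcal E\|\ge\bar\delta$, giving drop $\ge\tfrac{\tau}{2}\bar\delta^2$, while on the thin layer $[c+\bar\varepsilon,c+2\bar\varepsilon)$ one may need to start from $u$ with $|\mathcal E(u)-c|<\bar\varepsilon$ as in the hypothesis of (iii) and note that each step decreases $\mathcal E$, so the orbit enters (and, once $g=1$, keeps dropping through) the good band. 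A clean bound: starting from $\mathcal E(u)<c+\bar\varepsilon$, after each step either we are done or the value lies in $[c-\bar\varepsilon,c+\bar\varepsilon)$ where the next step drops it by at least $\tfrac{\tau}{2}\bar\delta^2$; hence after $m=\lceil 2\bar\varepsilon/(\tau\bar\delta^2)\rceil + 1$ steps we are below $c-\bar\varepsilon$. The uniform boundedness of $\mathcal C_*$ in $H^s_{\Omega,0}$ (Lemma \ref{L-infty-unif}, together with the truncation of Lemma \ref{truncated}) is what makes the "$o(\tau)$" in the Taylor expansion of $\mathcal E$ uniform over $u\in\mathcal C_*$, which is essential for choosing a single $\tau$ that works everywhere; I would spell this dependence out explicitly.
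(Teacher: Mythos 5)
Your core idea -- move $u$ along the segment toward $\tilde T(u)=u-\nabla\mathcal E(u)$, and use the convexity of $\mathcal C_*$ together with Lemma \ref{cononelcono} to stay in the cone -- is exactly the paper's mechanism: the paper's proof introduces precisely your one-step map as the Euler discretization $\bar\eta_k(t_{i+1})=(1-\lambda)\bar\eta_k(t_i)+\lambda\tilde T(\bar\eta_k(t_i))$ of the normalized flow $\dot\eta=-\chi_1(\mathcal E(\eta))\nabla\mathcal E(\eta)/\|\nabla\mathcal E(\eta)\|$, shows each polygonal stays in $\mathcal C_*$ by convexity, and passes to the limit to conclude that the continuous flow preserves the (closed) cone. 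The difference is that the paper then reads off the energy decrease \emph{exactly} along the flow, $\tfrac{d}{dt}\mathcal E(\eta)=-\chi_1(\mathcal E(\eta))\,\|\eta-\tilde T(\eta)\|_{H^s_{\Omega,0}}\le-\bar\delta$ in the active band, with no Taylor remainder; you instead stop at the discrete iteration and must control the second-order term.

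That is where your argument has a genuine gap. Your per-step estimate $\mathcal E(\Phi_\tau(u))\le\mathcal E(u)-\tfrac{\tau}{2}g(u)\|\nabla\mathcal E(u)\|^2$ needs $\|\mathcal E''\|$ bounded uniformly along all segments $[u,\tilde T(u)]$ with $u$ in the active band, and you justify this by saying Lemma \ref{L-infty-unif} "confines $\mathcal C_*$ to a bounded set" of $H^s_{\Omega,0}$. It does not: that lemma gives a priori bounds for \emph{solutions} of \eqref{eq} lying in $\mathcal C$, not for arbitrary elements of the cone, and neither $\mathcal C_*$ nor the sublevel set $\{u\in\mathcal C_*:|\mathcal E(u)-c|\le2\bar\varepsilon\}$ is bounded in $H^s_{\Omega,0}$ in general. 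The estimate can be rescued without any $H^s$-bound when $u_+<\infty$, because then every $u\in\mathcal C_*$ satisfies $u_-\le u\le u_+$ pointwise, so $\tilde f'(u)$ is bounded in $L^\infty$ and $|\mathcal E''(\xi)[v,v]|\le(1+\sup_{[0,u_+]}\tilde f')\|v\|_{H^s_{\Omega,0}}^2$ along the whole segment (which lies in $\mathcal C_*$); but the paper explicitly allows $u_+=+\infty$, and in that case $\tilde f'(u)$ is not uniformly bounded on $\mathcal C_*$ and no single $\tau$ works -- you would have to either normalize the step as the paper does \emph{and} pass to the continuous flow (where the decrease is exact), or supply a separate boundedness argument. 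Two smaller points: your iteration count $m=\lceil2\bar\varepsilon/(\tau\bar\delta^2)\rceil+1$ yields a guaranteed total drop of only about $\bar\varepsilon$, whereas the worst case in (iii) (starting just below $c+\bar\varepsilon$ and ending below $c-\bar\varepsilon$) requires a drop of $2\bar\varepsilon$, so $m$ should be roughly doubled; and the identification $\|\nabla\mathcal E(u)\|_{H^s_{\Omega,0}}=\|\mathcal E'(u)\|_*$, which you use implicitly to invoke Corollary \ref{conseqPS}, deserves a sentence (it is the content of the Riesz-representation remark following Lemma \ref{cononelcono}).
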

\begin{proof} The ideas of this proof are borrowed from \cite[Lemma 4.5]{BNW}, cf. also \cite[Lemma 3.8]{CN}. Let $\chi_1:\mathbb R\to [0,1]$ be a smooth cut-off function such that 
$$\chi_1(t)=
\begin{cases}1\quad&\mbox{if }|t-c|<\bar\varepsilon,\\
0&\mbox{if }|t-c|>2\bar\varepsilon,
\end{cases}
$$
where $\bar\delta$ and $\bar\varepsilon$ are given in Corollary \ref{conseqPS}.
Let $\Phi: H^s_{\Omega,0}\to H^s_{\Omega,0}$ be the map defined by
$$\Phi(u):=\begin{cases}\chi_1(\mathcal E(u))\frac{\nabla \mathcal E(u)}{\|\nabla \mathcal E(u)\|_{H^s_{\Omega,0}}}\quad&\mbox{if }|\mathcal E(u)-c|\le 2\bar\varepsilon,\\
0&\mbox{otherwise.}\end{cases}$$
Note that the definition of $\Phi$ is well posed by Corollary \ref{conseqPS}. 

For all $u\in\mathcal C_*$, we consider the Cauchy problem 
\begin{equation}\label{CauchyProblem}
\begin{cases}\frac{d}{dt}\eta(t,u)=-\Phi(\eta(t,u))\quad t\in(0,\infty),\\
\eta(0,u)=u.
 \end{cases}
\end{equation}
Being $\mathcal E$ of class $C^2$, there exists a unique solution $\eta(\cdot, u)\in C^1([0,\infty);H^s_{\Omega,0})$, cf. \cite[Chapter \$1]{D}. 

We shall prove that for all $t>0$, $\eta(t,\mathcal C_*)\subset \mathcal C_*$. 
Fix $\bar t>0$. For every $u\in\mathcal C_*$ and $k\in\N$ with $k\ge \bar t/\bar\delta$, let
$$\begin{cases}\bar\eta_k(0,u):=u,\\
\bar\eta_k\left(t_{i+1},u\right):=\bar\eta_k\left(t_i,u\right)-\frac{\bar t}k\Phi\left(\bar\eta_k\left(t_i,u\right)\right)\quad\mbox{for all }i=0,\dots,k-1,
\end{cases}
$$
with $$t_i:=i\cdot\frac{\bar t}k\quad\mbox{for all }i=0,\dots,k-1.$$
Let us prove that for all $i=0,\dots,k-1$, $\bar\eta_k\left(t_{i+1},u\right)\in\mathcal C_*$. If $|\mathcal E(u)-c|>2\bar\varepsilon$, then $\bar\eta_k\left(t_{i+1},u\right)=u\in\mathcal C_*$ for every $i=0,\dots,k-1$. Otherwise, let 
$$\lambda:=\frac{\bar t}k\cdot\frac{\chi_1\left(\mathcal E\left(\bar\eta_k\left(t_i,u\right)\right)\right)}{\|\bar\eta_k\left(t_i,u\right)-\tilde T\left(\bar\eta_k\left(t_i,u\right)\right)\|_{H^s_{\Omega,0}}}.$$
Clearly, $\lambda\le1$ by Corollary \ref{conseqPS}, being $k\ge \bar t/\bar\delta$ and $\|u-\tilde T(u)\|_{H^s_{\Omega,0}}=\|\nabla \mathcal E\|_{H^s_{\Omega,0}}$. Therefore,we have for every $i=0,\dots,k-1$
$$\bar\eta_k\left(t_{i+1},u\right)=(1-\lambda)\bar\eta_k\left(t_i,u\right)+\lambda \tilde T\left(\bar\eta_k\left(t_i,u\right)\right)\in \mathcal C_*$$
by induction on $i$, and by the convexity of $\mathcal C_*$. 
For every $i=0,\dots,k-1$, we can now define the line segment
$$
\eta_k^{(i)}(t,u):= \left(1-\frac{t}{\bar t} k+i\right)\bar\eta_k\left(t_i,u\right)+\left(\frac{t}{\bar t} k-i\right)\bar\eta_k\left(t_{i+1},u\right)
$$
for all $t\in\left[t_i,t_{i+1}\right]$.
We denote by $\eta_k:=\bigcup_{i=0}^{k-1}\eta^{(i)}_k$ the whole Euler polygonal defined in $[0,\bar t ]$. Being $\mathcal C_*$ convex, we get immediately that for all $t\in[0,\bar t ]$, $\eta_k(t,u)\in\mathcal C_*$. 

We claim that $\eta_k(\cdot,u)$ converges to the solution $\eta(\cdot,u)$ of the Cauchy problem \eqref{CauchyProblem} in $H^s_{\Omega,0}$.
Indeed, for all $i=0,\dots,k-1$, we integrate by parts the equation of \eqref{CauchyProblem} in the interval $[t_i,t_{i+1}]$ and we obtain
$$\eta(t_{i+1},u)=\eta(t_i,u)-\frac{\bar t}k\Phi(\eta(t_i,u))+\int_{t_i}^{t_{i+1}}(\tau-t_{i+1})\frac{d}{d\tau}\Phi(\eta(\tau,u))d\tau.$$
On the other hand, we define the error
$$\varepsilon_i:=\|\eta(t_i,u)-\eta_k(t_{i},u)\|_{H^s_{\Omega,0}}\quad\mbox{for every }i=0,\dots,k-1.$$
Hence, for every $i=0,\dots,k-1$, we get 
\begin{equation}\label{error-estimate}
\begin{aligned}
\varepsilon_{i+1}\le\varepsilon_i+&\frac{\bar t}k\|\Phi(\eta(t_i,u))-\Phi(\eta_k(t_i,u))\|_{H^s_{\Omega,0}}\\
&+\left\|\int_{t_i}^{t_{i+1}}(t_{i+1}-\tau)\frac{d}{d\tau}\Phi(\eta(\tau,u))d\tau\right\|_{H^s_{\Omega,0}}.
\end{aligned}
\end{equation}
Now, since $\Phi$ is locally Lipschitz and $\eta([0,\bar t])\subset H^s_{\Omega,0}$ is compact, 
\begin{equation}\label{ineqonPhi}
\|\Phi(\eta(t_i,u))-\Phi(\eta_k(t_i,u))\|_{H^s_{\Omega,0}}\le\varepsilon_i L_\Phi
\end{equation}
for some $L_\Phi=L_\Phi(\eta([0,\bar t]))>0$. 
Furthermore, 
$$\begin{aligned}\left\|\int_{t_i}^{t_{i+1}}(t_{i+1}-\tau)\frac{d}{d\tau}\Phi(\eta(\tau,u))d\tau\right\|_{H^s_{\Omega,0}}&\le\int_{t_i}^{t_{i+1}}(t_{i+1}-\tau)\left\|\frac{d}{d\tau}\Phi(\eta(\tau,u))\right\|_{H^s_{\Omega,0}}d\tau\\
&\le\frac{\bar t}{k}\int_0^{\bar t}\|\Phi'(\eta(\tau,u))\|_*\|\Phi(\eta(\tau,u))\|_{H^s_{\Omega,0}}d\tau\\
&\le\frac{\bar t^2}{k}\sup_{\tau\in[0,\bar t]}\|\Phi'(\eta(\tau,u))\|_*= \frac{\bar t^2}{k}L_\Phi.
\end{aligned}$$
Thus, combining the last inequality with \eqref{ineqonPhi} and \eqref{error-estimate}, we have
$$\varepsilon_{i+1}\le\varepsilon_i+\frac{\bar t}k\varepsilon_i L_\Phi+\frac{\bar t^2}{k}L_\Phi\quad\mbox{for all }i=0,\dots,k-1.$$
This implies that 
$$\varepsilon_{i+1}\le \frac{\bar t^2}k L_\Phi\sum_{j=0}^i\left(1+\frac{\bar t}k L_\Phi\right)^j=\bar t\left[\left(1+\frac{\bar t}k L_\Phi\right)^{i+1}-1\right]\to 0\quad\mbox{as }k\to\infty,$$
where we have used the fact that $\varepsilon_0=0$. By the triangle inequality and the continuity of $\eta(\cdot,u)$ and $\eta_k(\cdot,u)$, this yields the claim. 

Hence, for all $t\in[0,\bar t]$, $\eta(t,u)\in\mathcal C_*$ by the closedness of $\mathcal C_*$.   

For all $u\in\mathcal C_*$ and $t>0$ we can write
\begin{equation}\label{eq:flusso_decrescente}
\begin{aligned}\mathcal E(\eta(t,u))-\mathcal E(u)&=\int_0^t\frac{d}{d\tau}\mathcal E(\eta(\tau,u))d\tau\\
&\hspace{-2.5cm}=-\int_0^t\frac{\chi_1(\mathcal E(\eta(\tau,u)))}{\|\eta(\tau,u)-\tilde T(\eta(\tau,u))\|_{H^s_{\Omega,0}}}\mathcal E'(\eta(\tau,u))[\eta(\tau,u)-\tilde T(\eta(\tau,u))]d\tau\\
&\hspace{-2.5cm}
=-\displaystyle{\int_0^t \|\eta(\tau,u)-\tilde T(\eta(\tau,u))\|_{H^s_{\Omega,0}}\chi_1(\mathcal E(\eta(\tau,u)))d\tau}\le0.
\end{aligned}
\end{equation}

Now, let $u\in\mathcal C_*$ be such that $|\mathcal E(u)-c|<\bar\varepsilon$ and let $t\ge 2\bar\varepsilon/\bar\delta$. Then, two cases arise: either there exists $\tau\in[0,t]$ for which $\mathcal E(\eta(\tau,u))\le c-\bar\varepsilon$ and so, by the previous calculation we get immediately that $\mathcal E(\eta(t,u))\le c-\bar\varepsilon$, or for all $\tau\in[0,t]$, $\mathcal E(\eta(\tau,u))> c-\bar\varepsilon$. In this second case, 
$$c-\bar\varepsilon< \mathcal E(\eta(\tau,u))\le \mathcal E(u)< c+\bar\varepsilon.$$
In particular, by the definition of $\chi_1$, and by Corollary~\ref{conseqPS}, we have that for all $\tau\in[0,t]$ 
$$\chi_1(\mathcal E(\eta(\tau,u)))=1,\qquad\|\eta(\tau,u)-\tilde T(\eta(\tau,u))\|_{H^s_{\Omega,0}}\ge\bar\delta.$$
Hence, by \eqref{eq:flusso_decrescente}, we obtain
$$\mathcal E(\eta(t,u))\le
\mathcal E(u)-\displaystyle\int_0^t\bar\delta d\tau 
\le c+\bar\varepsilon-\bar\delta t\le c-\bar\varepsilon.
$$

Finally, if we define with abuse of notation $$\eta(u):=\eta\left(\frac{2\bar\varepsilon}{\bar\delta},u\right),$$ it is immediate to verify that $\eta$ satisfies (i)-(iv).
\end{proof}

\begin{lemma}[\textbf{Mountain pass geometry}]
\label{geometry}
Let $\tau>0$ be such that $\tau <\min \{u_0-u_-,u_+-u_0\}$. 
Then there exists $\alpha>0$ such that 
\begin{itemize}
\item[(i)] $\mathcal E(u)\ge \mathcal E(u_-)+\alpha$ for every $u \in
\mathcal C_*$ with $\|u-u_-\|_{L^\infty(\Omega)}=\tau$;
\item[(ii)] if $u_+< \infty$, then $\mathcal E(u)\ge \mathcal E(u_+)+\alpha$ for every $u \in
\mathcal C_*$ with $\|u-u_+\|_{L^\infty(\Omega)}= \tau$;
\item[(iii)] if $u_+=+\infty$, then there exists $\bar u\in\mathcal C_*$ with $\|\bar u-u_-\|_{L^\infty(\Omega)}>\tau$ such that $\mathcal E(\bar u)< \mathcal E(u_-)$. 
\end{itemize}
\end{lemma}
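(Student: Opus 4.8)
The plan is to rewrite the energy as $\mathcal E(u)=\frac{c_{n,s}}{4}[u]_{H^s_{\Omega,0}}^2+\int_\Omega g(u)\,dx$ with $g(t):=\tfrac12 t^2-\tilde F(t)$, so that $g'(t)=t-\tilde f(t)$. Since a constant $c$ solves \eqref{tildeP} weakly exactly when $\tilde f(c)=c$, both $u_-$ and $u_+$ (when finite) are critical points of $\mathcal E$, and $\mathcal E(u_\pm)=|\Omega|\,g(u_\pm)$ because the seminorm of a constant vanishes. The first step is to record the monotonicity of $g$: by the very definitions of $u_\pm$ there are no zeros of $\tilde f(t)-t$ in $(u_-,u_0)$ nor in $(u_0,u_+)$, while $\tilde f'(u_0)>\lambda_2^{+,\mathrm{r}}+1>1$ forces $\tilde f(t)-t<0$ just below $u_0$ and $>0$ just above; hence $\tilde f(t)<t$ on $(u_-,u_0)$ and $\tilde f(t)>t$ on $(u_0,u_+)$, i.e.\ $g$ is strictly increasing on $[u_-,u_0]$ and strictly decreasing on $[u_0,u_+]$ (with $u_+=+\infty$ allowed).

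For $(i)$ I would fix $u\in\mathcal C_*$ with $\|u-u_-\|_{L^\infty(\Omega)}=\tau$. By \eqref{Cstar} we have $u\ge u_-$, and since $\tau<u_0-u_-$ also $u_-\le u\le u_-+\tau<u_0$ a.e.\ in $\Omega$; the monotonicity of $g$ on $[u_-,u_0]$ then gives $g(u(x))\ge g(u_-)$ a.e., hence
\[
\mathcal E(u)-\mathcal E(u_-)=\frac{c_{n,s}}{4}[u-u_-]_{H^s_{\Omega,0}}^2+\int_\Omega\bigl(g(u)-g(u_-)\bigr)\,dx\ \ge\ 0 .
\]
To upgrade this to a uniform $\alpha>0$ I would argue by contradiction: if no such $\alpha$ existed there would be $(u_k)\subset\mathcal C_*$ with $\|u_k-u_-\|_{L^\infty}=\tau$ and $\mathcal E(u_k)-\mathcal E(u_-)\to0$; the two non-negative summands above would then both tend to $0$, so $[u_k-u_-]_{H^s_{\Omega,0}}\to0$ and $\int_\Omega(g(u_k)-g(u_-))\,dx\to0$. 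Because $\|u_k\|_{L^\infty}\le u_-+\tau$, the sequence $(u_k-u_-)$ is bounded in $H^s_{\Omega,0}$; up to a subsequence $u_k-u_-\rightharpoonup\kappa$ in $H^s_{\Omega,0}$ and $u_k-u_-\to\kappa$ in $L^2(\Omega)$ by Proposition \ref{embedding}, and weak lower semicontinuity of the seminorm gives $[\kappa]_{H^s_{\Omega,0}}=0$, so $\kappa$ is a constant with $0\le\kappa\le\tau$. Then $u_k-u_-\to\kappa$ strongly in $H^s_{\Omega,0}$, and since $u_k-u_-\in\mathcal C$ (non-negative, radial, monotone) Lemma \ref{L-infty} yields $\|u_k-u_--\kappa\|_{L^\infty(\Omega)}\le C\|u_k-u_--\kappa\|_{H^s_{\Omega,0}}\to0$; hence $\|u_k-u_-\|_{L^\infty}\to\kappa$, which forces $\kappa=\tau$. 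But then $\int_\Omega(g(u_k)-g(u_-))\,dx\to|\Omega|\bigl(g(u_-+\tau)-g(u_-)\bigr)>0$ by strict monotonicity of $g$, contradicting $\int_\Omega(g(u_k)-g(u_-))\,dx\to0$.

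Part $(ii)$ is obtained by the same scheme with $u_+$ in place of $u_-$: for $u\in\mathcal C_*$ with $\|u-u_+\|_{L^\infty}=\tau<u_+-u_0$ one has $u_0<u_+-\tau\le u\le u_+$ a.e., so the monotonicity of $g$ on $[u_0,u_+]$ gives $g(u)\ge g(u_+)$ pointwise and $\mathcal E(u)-\mathcal E(u_+)\ge0$, and the contradiction argument then runs verbatim, applying Lemma \ref{L-infty} to the non-negative radial monotone function $u_+-u_k$. For part $(iii)$, if $u_+=+\infty$ then $\tilde f(t)-t$ has no zero in $(u_0,\infty)$, so $g$ is strictly decreasing on $[u_0,\infty)$; moreover, by \eqref{subcritical}, $\tilde F(t)\sim t^{\ell}/\ell$ with $\ell>2$, whence $g(t)\to-\infty$ as $t\to+\infty$. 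Choosing a constant $A>u_-$ so large that $A-u_->\tau$ and $g(A)<g(u_-)$, the function $\bar u\equiv A$ lies in $\mathcal C_*$ (when $u_+=+\infty$ the only constraint in \eqref{Cstar} is $u\ge u_-$), and satisfies $\|\bar u-u_-\|_{L^\infty}=A-u_->\tau$ and $\mathcal E(\bar u)=|\Omega|g(A)<|\Omega|g(u_-)=\mathcal E(u_-)$, as required.

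The only genuinely delicate point is turning the pointwise inequalities $\mathcal E\ge\mathcal E(u_\mp)$ into a \emph{uniform} gap over the $L^\infty$-spheres $\{\|u-u_\mp\|_{L^\infty}=\tau\}$, which are not closed in $H^s_{\Omega,0}$ and on which $g(u)-g(u_\mp)$ need not be bounded below by a multiple of $\tau$. The contradiction/compactness argument circumvents this by using the Palais-Smale-type compactness inherent in the structure of $\mathcal E$ together with the key feature of the cone isolated in Lemma \ref{L-infty}: an element of $\mathcal C$ that is small in $H^s_{\Omega,0}$ is automatically small in $L^\infty$ — which is precisely what keeps the sphere away from the critical point, and explains why one must work inside $\mathcal C$ (and, in the annulus, inside both $\mathcal C_+$ and $\mathcal C_-$).
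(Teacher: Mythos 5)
Your treatment of parts (i) and (iii) is essentially the paper's own argument. For (i) the paper also argues by contradiction along a sequence on the $L^\infty$-sphere, uses the sign of $t-\tilde f(t)$ on $(u_-,u_0)$ (its \eqref{fmp}; this is exactly your monotonicity of $g$, since $g(u_-+w)-g(u_-)=\int_0^1\bigl(u_-+tw-\tilde f(u_-+tw)\bigr)w\,dt$) to force the Gagliardo seminorm to vanish, identifies the limit as the constant $\tau$, and concludes by dominated convergence; for (iii) it tests with large constants using $\tilde f\in\mathfrak F_{M,\delta}$ rather than \eqref{subcritical}, but both yield $\mathcal E(t\cdot 1)\to-\infty$. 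One small imprecision in your (i): $u_k-u_--\kappa$ may change sign, hence is not in $\mathcal C$, so Lemma \ref{L-infty} does not literally apply to it; what is true (and suffices) is that for a non-negative, non-decreasing radial function $\|u_k-u_-\|_{L^\infty(\Omega)}$ is the value near $|x|=R$, which \eqref{embLinf} does control, giving $\|u_k-u_-\|_{L^\infty(\Omega)}\to\kappa$ and hence $\kappa=\tau$.

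The genuine concern is part (ii) in the ball, where your claim that the argument "runs verbatim, applying Lemma \ref{L-infty} to $u_+-u_k$" fails: there $\mathcal C=\mathcal C_+(B_R)$ consists of \emph{non-decreasing} functions, so $u_+-u_k$ is non-increasing, does not belong to $\mathcal C$, and its $L^\infty$ norm is its value at the origin --- precisely what the radial embedding \eqref{embLinf} (and hence Lemma \ref{L-infty}) does not control. The identification $\kappa=\tau$ then breaks down: one can have $[u_+-u_k]_{H^s_{\Omega,0}}\to 0$ and $u_+-u_k\to 0$ in $L^2$ while $\|u_+-u_k\|_{L^\infty(\Omega)}=\tau$ persists (e.g. $u_k=u_+-\tau\min\bigl(1,|x|^{-\beta}/k\bigr)$ with $0<\beta<(n-2s)/2$, which lies in $\mathcal C_{+,*}$ whenever $n>2s$, and along which $\mathcal E(u_k)\to\mathcal E(u_+)$). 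To be fair, the paper's own proof of (ii) consists of the sentence ``in a similar way'' and is silent on exactly this asymmetry between the inner and outer boundary; in the annulus the argument is sound because \eqref{embLinf} controls the whole domain. At minimum you should not present (ii) for the ball as an immediate repetition of (i); it requires a separate idea (or a reformulation of the statement) near the origin.
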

\begin{proof} The proof is analogous to the one of \cite[Lemma 4.6]{BNW}, we report it here for the sake of completeness. 
Suppose by contradiction that there exists a sequence $(w_k)
\subset \mathcal C_*$ such that
\begin{equation}\label{wnbounded}
\|w_k\|_{L^\infty(\Omega)}=w_k(R)=\tau>0\quad\mbox{for all }k
\end{equation}
and $\limsup \limits_{k \to \infty}
\bigl[\mathcal E(u_-+w_k)-\mathcal E(u_-)\bigr] \le 0$. Since
$$
\begin{aligned}
&\frac12\int_\Omega((u_-+w_k)^2-u_-^2)dx=\int_\Omega\int_0^1(u_-+t w_k)w_k\,dtdx,\\
&\tilde F(u_-+w_k)-\tilde F(u_-)=\int_0^1\tilde f(u_-+tw_k)w_k dt,
\end{aligned}
$$
we get
\begin{align*}
\mathcal E&(u_-+w_k)-\mathcal E(u_-)\\ 
&= \frac{1}2\left([w_k]_{H^s_{\Omega,0}}^2+ \int_\Omega[(u_-+w_k)^2 -u_-^2]dx \right) 
 - \int_\Omega \Bigl(\tilde F(u_- + w_k)-\tilde F(u_-))\,dx\\
&= \frac{1}2\left([ w_k ]_{H^s_{\Omega,0}}^2 + 
\int_\Omega \int_0^1 \Bigl(u_-+t w_k - \tilde f(u_-+t w_k)\Bigr)w_k\,dt dx\right). 
\end{align*} 
Therefore, since by $(f_3)$ and the definition of $u_-$ 
\begin{equation}\label{fmp}
t-\tilde f(t)>0 \qquad \text{for $t \in (u_-,u_0)$,}  
\end{equation}
we conclude that $[w_k]_{H^s_{\Omega,0}} \to 0$. We claim that $(w_k)$ converges to the constant solution $w\equiv \tau$ in the $H^s_{\Omega,0}$ norm. Indeed, using $[w_k]_{H^s_{\Omega,0}} \to 0$ and \eqref{wnbounded}, we have that $(w_k)$ is bounded in $H^s_{\Omega,0}$ and so, up to a subsequence, it weakly converges to some $w\in H^s_{\Omega,0}$. Hence, 
\begin{equation}\label{fromwc}
\begin{aligned}
0&=\lim_{k\to\infty}\iint_{\mathbb R^{2n}\setminus (\Omega^c)^2} \frac{[(w_k-w)(x)-(w_k-w)(y)](w(x)-w(y))}{|x-y|^{n+2s}}dx dy\\
&=\lim_{k\to\infty}\iint_{\mathbb R^{2n}\setminus (\Omega^c)^2} \frac{(w_k(x)-w_k(y))(w(x)-w(y))}{|x-y|^{n+2s}}dx dy -[w]^2_{H^s_{\Omega,0}}.
\end{aligned}
\end{equation}
Moreover, 
\begin{equation}\label{CauchySchwartz}
\frac{c_{n,s}}{2}\iint_{\mathbb R^{2n}\setminus (\Omega^c)^2} \frac{(w_k(x)-w_k(y))(w(x)-w(y))}{|x-y|^{n+2s}}dx dy\le C [w_k]_{H^s_{\Omega,0}}[w]_{H^s_{\Omega,0}}.
\end{equation}
Combining \eqref{fromwc} and \eqref{CauchySchwartz}, we get $[w]_{H^s_{\Omega,0}}=0$, which implies that $w\equiv \tau$.  Thus, $(w_k)$ converges to the constant $\tau$ in $H^s_{\Omega,0}$.
By the Dominated Convergence Theorem we can conclude that 
\begin{align*}
0 &\ge \lim_{k \to \infty} \int_\Omega \int_0^1 \Bigl(u_-+t w_k - \tilde f(u_-+t w_k)\Bigr)w_k\,dtdx\\
&=  \int_\Omega \int_0^1 \Bigl(u_-+t \tau - \tilde f(u_-+t\tau)\Bigr)\tau\,dt dx,
\end{align*}
which contradicts \eqref{fmp}. Hence there exists $\alpha_1>0$ such that (i) holds. 

In a similar way, now using the fact that $t-\tilde f(t)<0$ for $t \in (u_0,u_+)$, we find $\alpha_{2}>0$
such that (ii) holds if $u_+< \infty$. The claim then follows with $\alpha:= \min \{\alpha_1,\alpha_2\}$.

Finally, if $u_+=+\infty$, the existence of a point $\bar u\in \mathcal C_*$ outside the crest centered in $u_-$ is guaranteed  by the following estimate (cf. also \cite[Remarks p. 118]{C-Bruto}):
\begin{equation}\label{I-infty}\begin{aligned}\mathcal E(t\cdot 1)&=|\Omega|\left(\frac{t^2}2-\int_0^t\tilde f(s)ds\right)\\
&\le |\Omega|\left(\frac{t^2}2-\int_0^M\tilde f(s)ds-(1+\delta)\int_M^ts ds\right)\\
&\le\frac{|\Omega|}2\left(t^2-2M\min_{s\in[0,M]}\tilde f(s)-(1+\delta)(t^2-M^2)\right)\\
&=C-\frac{|\Omega|\delta}2t^2\to-\infty\quad\mbox{as }t\to\infty,
\end{aligned}
\end{equation}
where we have used the fact that $\tilde f\in\mathfrak F_{M,\delta}$. This shows (iii) and concludes the proof.
\end{proof}

\begin{remark}
We observe that, comparing (i) and (ii) in Lemma \ref{geometry}, it is apparent that, whenever $u_+<+\infty$, if $\mathcal E(u_-)<\mathcal E(u_+)$, then $u_+$ plays the role of the center inside the crest of the mountain pass and $u_-$ plays the role of the point outside the crest with less energy, otherwise the roles of $u_-$ and $u_+$ have to be interchanged. 
\end{remark}

Now, let
\begin{equation}\label{eq:2}
\begin{aligned} U_- &:= \left\{u \in \mathcal C_* \::\: \mathcal E(u)<\mathcal E(u_-)+\frac{\alpha}{2},\:
\|u-u_- \|_{L^\infty(\Omega)} < \tau\right\},\\
\\
U_+&:=\begin{cases}
\displaystyle{\left\{u \in \mathcal C_* \::\: \mathcal E(u)<\mathcal E(u_+)+\frac{\alpha}{2},\:
\|u-u_+ \|_{L^\infty(\Omega)} < \tau\right\}},&\mbox{ if }u_+<\infty,\\
&\\
\left\{u \in \mathcal C_* \, :\, \mathcal E(u)< \mathcal E(u_-),\, \|u-u_-\|_{L^\infty(\Omega)}>\tau\right\},&\mbox{ if }u_+=\infty
\end{cases}
\end{aligned}
\end{equation}
where $\tau$ and $\alpha$ are given by Lemma \ref{geometry},

$$
\Gamma:=\left\{ \gamma\in C([0,1];\mathcal C_*)\ :\  \gamma(0) \in U_-,\:
  \gamma(1) \in U_+\right\},
$$
and
\begin{equation}\label{minmax}
c:=\inf_{\gamma\in\Gamma}\max_{t\in[0,1]} \mathcal E(\gamma(t)).
\end{equation}

\begin{remark}
The reason for considering two sets, $U_+$ and $U_-$, instead of just two points for the starting and the ending points of the admissible curves will be clear in Lemma \ref{lemma:nonconstant_p>2}. Indeed, this choice makes easier exhibiting an admissible curve along which the energy is lower than the energy of the constant.  
\end{remark}

\begin{proposition}[\textbf{Mountain Pass Theorem}]\label{mountainpass} The value $c$ defined in \eqref{minmax} is finite and there exists a
critical point $u\in\mathcal C_*\setminus\{u_-,u_+\}$ of $\mathcal E$ with $\mathcal E(u)=c$. In particular, $u$ is a weak solution of \eqref{P}.
\end{proposition}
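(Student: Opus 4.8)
The plan is to run the classical mountain pass argument, but carried out entirely inside the convex set $\mathcal C_*$ and with Lemma~\ref{deformation} in place of the usual deformation lemma. I argue by contradiction: assume $\mathcal E'(u)\neq 0$ for every $u\in\mathcal C_*$ with $\mathcal E(u)=c$, so that Corollary~\ref{conseqPS} and Lemma~\ref{deformation} apply, and fix the constant $\bar\varepsilon$ of Corollary~\ref{conseqPS} small enough that $\bar\varepsilon<\alpha/4$ (legitimate, since those statements persist for smaller $\bar\varepsilon$). First I would check $\Gamma\neq\emptyset$, which gives $c<+\infty$: the constants $u_-$ and, if $u_+<\infty$, $u_+$ lie in $U_-$ and $U_+$, and if $u_+=\infty$ the point $\bar u$ of Lemma~\ref{geometry}(iii) lies in $U_+$; then the segment of constants $t\mapsto(1-t)u_-+tu_+$ (if $u_+<\infty$), respectively the ray $t\mapsto(1-t)u_-+t\Lambda$ with $\Lambda$ large enough that $\mathcal E(\Lambda\cdot 1)<\mathcal E(u_-)$ and $\Lambda-u_->\tau$ (such $\Lambda$ exists by \eqref{I-infty}, if $u_+=\infty$), stays in $\mathcal C_*$ and joins $U_-$ to $U_+$.

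Next I would prove the mountain pass separation $c\ge\max\{\mathcal E(u_-),\mathcal E(u_+)\}+\alpha$ (reading $\mathcal E(u_+)=-\infty$ when $u_+=\infty$). Given $\gamma\in\Gamma$, I consider $t\mapsto\|\gamma(t)-u_-\|_{L^\infty(\Omega)}$: it is $<\tau$ at $t=0$ by definition of $U_-$, and $>\tau$ at $t=1$ — either because $\|\gamma(1)-u_-\|_{L^\infty}\ge\|u_+-u_-\|_{L^\infty}-\|\gamma(1)-u_+\|_{L^\infty}>(u_+-u_-)-\tau>\tau$ (using $2\tau<(u_0-u_-)+(u_+-u_0)$) if $u_+<\infty$, or directly by definition of $U_+$ if $u_+=\infty$. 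By the intermediate value theorem there is $t^*$ with $\|\gamma(t^*)-u_-\|_{L^\infty(\Omega)}=\tau$, whence $\mathcal E(\gamma(t^*))\ge\mathcal E(u_-)+\alpha$ by Lemma~\ref{geometry}(i); arguing symmetrically with $u_+$ in place of $u_-$ when $u_+<\infty$ (via Lemma~\ref{geometry}(ii), again using the monotonicity in the cone) shows $\gamma$ also reaches level $\ge\mathcal E(u_+)+\alpha$. Taking the infimum over $\gamma$ yields the claim, so in particular $c>\mathcal E(u_-)$ and $c>\mathcal E(u_+)$. The main delicate point here is the continuity of $t\mapsto\|\gamma(t)-u_\pm\|_{L^\infty(\Omega)}$ along $H^s$-continuous paths in $\mathcal C_*$, needed for the intermediate value theorem: the $L^\infty$ norm is not $H^s$-continuous on arbitrary radial functions, and one gets around this by using that the relevant differences lie again in the cone $\mathcal C$ — for instance $\gamma(t)-u_-$ is non-negative, radial and non-decreasing, so its supremum over $B_R$ is attained on $B_R\setminus B_{R/2}$ and is controlled there by the radial embedding \eqref{embLinf}, exactly as in Lemma~\ref{L-infty}.

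For the existence of a critical point at level $c$ I would deform a near-optimal curve. Choose $\gamma\in\Gamma$ with $\max_{[0,1]}\mathcal E\circ\gamma<c+\bar\varepsilon$ and set $\tilde\gamma:=\eta\circ\gamma$, with $\eta$ as in Lemma~\ref{deformation}; $\tilde\gamma$ is continuous from $[0,1]$ into $\mathcal C_*$. At the endpoints $\mathcal E(\gamma(0))<\mathcal E(u_-)+\alpha/2$ and $\mathcal E(\gamma(1))<\mathcal E(u_+)+\alpha/2$ (or $<\mathcal E(u_-)$ if $u_+=\infty$), so by the separation just proved $c-\mathcal E(\gamma(0)),\,c-\mathcal E(\gamma(1))>\alpha/2>2\bar\varepsilon$; hence property~(iv) fixes the endpoints, $\tilde\gamma\in\Gamma$, while properties~(ii)--(iii) give $\mathcal E(\tilde\gamma(t))\le c-\bar\varepsilon$ for all $t$, so $\max_{[0,1]}\mathcal E\circ\tilde\gamma\le c-\bar\varepsilon<c$, contradicting the definition of $c$. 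Thus there is $u\in\mathcal C_*$ with $\mathcal E(u)=c$ and $\mathcal E'(u)=0$, and since $c>\mathcal E(u_-)$, $c>\mathcal E(u_+)$ we get $u\in\mathcal C_*\setminus\{u_-,u_+\}$.

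Finally I would upgrade $u$ to a solution of \eqref{P}. The identity $\mathcal E'(u)=0$ means $u=\tilde T(u)$, i.e. $u$ is a weak solution of $(-\Delta)^su+u=\tilde f(u)$ in $\Omega$ with $\mathcal N_su=0$ in $\R^n\setminus\overline\Omega$; by Lemma~\ref{regularity} $u\in C^2(\R^n)$. Writing $\tilde f(u)-u=c_0(x)u$ with $c_0$ bounded (possible since $\tilde f\in C^1$, $\tilde f(0)=0$), the strong maximum principle with a zero order term (Remark after Theorem~\ref{MaxPrinc}) gives $u>0$ in $\Omega$ or $u\equiv 0$ a.e.\ in $\Omega$; the latter is impossible, since it would force $u=u_-$ when $u_-=0$ and contradict $u\ge u_->0$ when $u_->0$. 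Hence $u$ solves \eqref{tildeP}, and as $u\in\mathcal C_*\subset\mathcal C$, Lemma~\ref{truncated} shows that $u$ is a weak solution of \eqref{P}.
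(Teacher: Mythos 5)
Your argument follows the route the paper intends (it declares the proof standard given Lemmas~\ref{geometry} and \ref{deformation} and refers to \cite{CN}): nonemptiness of $\Gamma$ via paths of constants, a crossing argument giving $c\ge\max\{\mathcal E(u_-),\mathcal E(u_+)\}+\alpha$, and deformation of a near-optimal path with endpoints frozen by property (iv). You also correctly isolated the one delicate point, the continuity of $t\mapsto\|\gamma(t)-u_\pm\|_{L^\infty(\Omega)}$ along $H^s$-continuous paths, and your resolution for $u_-$ is right: $\gamma(t)-u_-$ is non-negative, radial and non-decreasing, so its supremum is attained on $B_R\setminus B_{R/2}$ and is controlled by \eqref{embLinf}.

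The step that does not go through is the claim that the crossing of $\{\|u-u_+\|_{L^\infty}=\tau\}$ follows ``symmetrically''. In the ball, for $\gamma(t)\in\mathcal C_{+,*}$ one has $\gamma(t)\le u_+$, hence $\|\gamma(t)-u_+\|_{L^\infty(B_R)}=u_+-\operatorname{ess\,inf}_{B_R}\gamma(t)$, and $u_+-\gamma(t)$ is radially \emph{non-increasing}: its supremum sits at the origin, exactly where \eqref{embLinf} gives no control. When $2s<n$ one can lower a cone function by $\tau$ on $B_{\varepsilon}$ at arbitrarily small $H^s_{\Omega,0}$ cost, so $u\mapsto\operatorname{ess\,inf}_\Omega u$ is only upper semicontinuous on $\mathcal C_{+,*}$; consequently $t\mapsto\|\gamma(t)-u_+\|_{L^\infty}$ may jump over the value $\tau$ and the intermediate value theorem cannot be invoked. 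Thus $c\ge\mathcal E(u_+)+\alpha$ is not established by your argument. This matters precisely when $\Omega=B_R$, $u_+<\infty$ and $\mathcal E(u_+)>\mathcal E(u_-)$: the bound $c\ge\mathcal E(u_-)+\alpha$ coming from the $u_-$-sphere then neither guarantees $|\mathcal E(\gamma(1))-c|>2\bar\varepsilon$ (so property (iv) need not freeze the right endpoint, and $U_+$ is not invariant under $\eta$ because of its $L^\infty$ constraint) nor yields $u\ne u_+$. In the annulus, when $u_+=\infty$, or when $\mathcal E(u_+)\le\mathcal E(u_-)$, your proof is complete; the remaining case requires an extra idea (for instance a barrier built from the $H^s$-continuous quantity $\sup_\Omega\gamma(t)$ together with a correspondingly modified version of Lemma~\ref{geometry}(ii)), and as written the ``symmetric'' step is a genuine gap.
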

\noindent The proof of the above proposition is standard, once one has the mountain pass geometry (Lemma \ref{geometry}) and the deformation Lemma (Lemma \ref{deformation}). We refer e.g. to \cite[Proposition 3.10]{CN} for a proof given in a very similar situation.

\section{Non-constancy of the minimax solution}\label{sec4}
In this section we prove that the solution $u\in\mathcal C_*$, whose existence has been established in the previous section, is non-constant. Since we work in the restricted cone $\mathcal C_*$ where the only constant solutions are $u_-$, $u_+$, and $u_0$, and since the mountain pass geometry guarantees that $u\not\equiv u_-$ and $u\not\equiv u_+$ (cf. Proposition \ref{mountainpass}), it is enough to prove that $u\not\equiv u_0$. To this aim, following the idea in \cite[Section 4]{BNW}, we first prove that on the Nehari-type set 
$$
 N_*:=\{u\in\mathcal C_*\setminus\{0\}\,:\, \mathcal E'(u)[u]=0\},
$$
i.e., roughly speaking, on the crest of the mountain pass, the infimum of the energy is strictly less than $\mathcal E(u_0)$, cf. also \cite[Remark 2]{CN}. Then, we explicitly build an admissible curve $\bar \gamma\in \Gamma$ along which the energy is less than $\mathcal E(u_0)$. By \eqref{minmax}, this ensures that the mountain pass level is less than $\mathcal E(u_0)$ and so $u\not\equiv u_0$.\smallskip

We start by introducing some useful notation. 
We denote by 
$$
H^s_\mathrm{rad}:=\{u\in H^s_{\Omega,0}\,:\, u\mbox{ radial }\},
$$
we introduce also the space of radial, non-decreasing functions
$$
H^s_\mathrm{+,r}:=\{u\in H^s_{\Omega,0}\,:\, u\mbox{ radial and radially non-decreasing }\}.
$$
We define the second radial eigenvalue $\lambda_2^\mathrm{rad}$ and the second radial increasing eigenvalue $\lambda_2^{+,\mathrm{r}}$ of the fractional Neumann Laplacian in $\Omega$ as follows:
\begin{equation}\label{lambda2rad+}
\lambda_2^\mathrm{rad}:=\inf_{v\in H^s_\mathrm{rad},\,\int v=0}\frac{[v]^2_{H^s_{\Omega,0}}}{\int_{\Omega}v^2},\qquad\quad
\lambda_2^\mathrm{+,r}:=\inf_{v\in H^s_\mathrm{+,r},\,  \int v=0}\frac{[v]^2_{H^s_{\Omega,0}}}{\int_{\Omega}v^2}.
\end{equation}
Clearly, the following chain of inequalities holds by inclusion $H^s_{+,\mathrm{r}}\subset H^s_\mathrm{rad}\subset H^s_{\Omega,0}$
$$
0<\lambda_2\le \lambda_2^\mathrm{rad}\le \lambda_2^\mathrm{+,r}
$$
and, by the direct method of Calculus of Variations, all these infima are achieved.

\begin{remark}
We observe that in the local case, i.e., for the Neumann Laplacian, it is known that the second radial eigenfunction is increasing, so that the second radial eigenvalue and the second radial {\it increasing} eigenvalue coincide. In this nonlocal setting we do not know whether the same equality holds true. 
In \cite{BNW}, for the local case, the condition required on $f'(u_0)$ involves the second radial eigenvalue, and the proof of the non-constancy of the solution uses the monotonicity of the associated eigenfunction. In this paper, we need to require an assumption involving $\lambda_2^{+,\mathrm{r}}$, which, as explained above, might be more restrictive. On the other hand, as will be clear in Proposition \ref{constant-sol}, some condition on the derivative of $f$ is needed in order to guarantee the existence of non-constant solutions. 
\end{remark}
 
\begin{lemma}\label{4.9}
Let $v_2\in H^s_{+,\mathrm{r}}$ be the second radial increasing eigenfunction, namely the function that realizes $\lambda_2^{+,\mathrm{r}}$. 
Let
$$
\psi: \R^2 \to \R,\qquad \psi(s,t):=\mathcal E'(t(u_0+sv_2))[u_0+sv_2], 
$$
then there exist $\eps_1,\eps_2>0$ and a
$C^1$ function $h:(-\eps_1,\eps_1) \to (1-\eps_2,1+\eps_2)$
such that for $(s,t) \in V:= (-\eps_1,\eps_1) \times
(1-\eps_2,1+\eps_2)$ we have 
\begin{equation}\label{psi=0}
\psi(s,t)=0\quad \mbox{if and only if}\quad t=h(s).
\end{equation}
Moreover, 
\begin{itemize}
\item[(i)] $h(0)=1$, $h'(0)=0$;\smallskip
\item[(ii)] $\frac{\partial}{\partial t}\psi(s,t)<0$ for $(s,t)\in V$;\smallskip
\item[(iii)] $\mathcal E(h(s)(u_0+sv_2))<\mathcal E(u_0)$ for $s \in (-\eps_1,\eps_1)$, $s\neq 0$.
\end{itemize}
The same result holds true replacing $v_2$ with the second radial decreasing eigenfunction $-v_2$ (which clearly corresponds to the same eigenvalue $\lambda_2^{+,\mathrm{r}}$).
\end{lemma}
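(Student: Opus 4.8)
The plan is to set up and solve $\psi(s,t)=0$ near the point $(0,1)$ via the implicit function theorem, and then extract the three claimed properties. First I would compute $\psi(0,t)$ explicitly: since $u_0$ is a constant and $\tilde f(u_0)=u_0$, one has $(-\Delta)^s u_0 = 0$, hence $\tilde T(u_0) = u_0$, so $\nabla\mathcal E(u_0)=0$ and $\mathcal E'(u_0)[\cdot]\equiv 0$. More generally, testing the weak formulation, $\psi(0,t)=\mathcal E'(tu_0)[u_0] = \big(c_{n,s}\text{-term in }tu_0 \text{ applied to }u_0\big) + \int_\Omega (tu_0)u_0\,dx - \int_\Omega \tilde f(tu_0)u_0\,dx = |\Omega|\,u_0\big(tu_0 - \tilde f(tu_0)\big)$, which vanishes at $t=1$ and has $t$-derivative at $t=1$ equal to $|\Omega|\,u_0^2\big(1-\tilde f'(u_0)\big)$. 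By $(f_3)$ (and the fact that $\tilde f$ agrees with $f$ near $u_0$), $\tilde f'(u_0)>\lambda_2^{+,\mathrm r}+1>1$, so $\partial_t\psi(0,1)<0$. The implicit function theorem then yields $\eps_1,\eps_2>0$ and $h\in C^1$ with $h(0)=1$ and the equivalence \eqref{psi=0} on $V$; shrinking $\eps_1,\eps_2$ if necessary, continuity of $\partial_t\psi$ gives (ii).

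For (i), the value $h(0)=1$ is built into the construction; for $h'(0)=0$ I would differentiate the identity $\psi(s,h(s))\equiv 0$ to get $h'(0) = -\partial_s\psi(0,1)/\partial_t\psi(0,1)$, and show $\partial_s\psi(0,1)=0$. Here I would use that $v_2$ is an eigenfunction: $(-\Delta)^s v_2 = \lambda_2^{+,\mathrm r} v_2$ with $\mathcal N_s v_2 = 0$ and $\int_\Omega v_2 = 0$. A direct computation of $\partial_s\psi(0,1)$ — differentiating $\mathcal E'(t(u_0+sv_2))[u_0+sv_2]$ in $s$ at $s=0$, $t=1$ — produces a combination of $[u_0,v_2]_{H^s_{\Omega,0}}$-type bilinear terms, $\int_\Omega u_0 v_2\,dx$ and $\int_\Omega \tilde f'(u_0)u_0 v_2\,dx$; since $u_0$ is constant all the quadratic-form cross terms with the constant vanish, and the remaining terms are multiples of $\int_\Omega v_2\,dx = 0$. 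Hence $\partial_s\psi(0,1)=0$ and $h'(0)=0$.

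The substantive part is (iii), showing $\mathcal E(h(s)(u_0+sv_2))<\mathcal E(u_0)$ for small $s\ne 0$. The plan is a second-order Taylor expansion in $s$ of the function $g(s):=\mathcal E(h(s)(u_0+sv_2))$. By (i), $g'(0)=\mathcal E'(u_0)[\,h'(0)u_0 + v_2\,] = 0$ (using $\mathcal E'(u_0)=0$), so it suffices to show $g''(0)<0$. Using $h(0)=1$, $h'(0)=0$, the second derivative reduces to $g''(0) = \mathcal E''(u_0)[u_0,u_0]\,h''(0) \cdot(\text{something vanishing since }\mathcal E'(u_0)=0) + \mathcal E''(u_0)[v_2,v_2]$; more carefully, since $\mathcal E'(u_0)=0$ the $h''(0)$ contribution drops and $g''(0) = \mathcal E''(u_0)[v_2,v_2]$. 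Now
\[
\mathcal E''(u_0)[v_2,v_2] = c_{n,s}\!\!\iint_{\R^{2n}\setminus(\Omega^c)^2}\!\!\frac{|v_2(x)-v_2(y)|^2}{2|x-y|^{n+2s}}\,dx\,dy + \int_\Omega v_2^2\,dx - \int_\Omega \tilde f'(u_0)v_2^2\,dx = [v_2]_{H^s_{\Omega,0}}^2 + \big(1-\tilde f'(u_0)\big)\!\int_\Omega v_2^2\,dx.
\]
Since $v_2$ realizes $\lambda_2^{+,\mathrm r}$ with $\int_\Omega v_2 = 0$, we have $[v_2]_{H^s_{\Omega,0}}^2 = \lambda_2^{+,\mathrm r}\int_\Omega v_2^2\,dx$, hence $g''(0) = \big(\lambda_2^{+,\mathrm r} + 1 - \tilde f'(u_0)\big)\int_\Omega v_2^2\,dx$, which is strictly negative by $(f_3)$. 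A Taylor expansion with remainder (using $g\in C^2$, which follows from $\mathcal E\in C^2$, $h\in C^1$, and $\tilde f\in C^1$ together with the regularity improvement making $\mathcal E$ twice differentiable) then gives $g(s) < g(0) = \mathcal E(u_0)$ for $0<|s|<\eps_1$ after possibly shrinking $\eps_1$. Finally, the statement for $-v_2$ is identical since $-v_2$ realizes the same eigenvalue and $\int_\Omega(-v_2)=0$. The main obstacle I anticipate is the careful bookkeeping in differentiating $\psi$ and $g$ — keeping track of which bilinear/nonlinear cross terms survive — together with justifying that $\mathcal E$ is genuinely $C^2$ so that the Taylor expansion with a Lagrange-type remainder is legitimate; the eigenvalue identity and $(f_3)$ do all the sign work once the expansion is in place.
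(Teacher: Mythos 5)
Your proposal is correct and follows essentially the same route as the paper: the implicit function theorem applied at $(0,1)$ with $\partial_t\psi(0,1)=\mathcal E''(u_0)[u_0,u_0]=(1-\tilde f'(u_0))|\Omega|u_0^2<0$ and $\partial_s\psi(0,1)=0$ via $\int_\Omega v_2\,dx=0$, followed by a second-order Taylor expansion in which $\mathcal E'(u_0)=0$ kills the first-order term and $\mathcal E''(u_0)[v_2,v_2]=\bigl(\lambda_2^{+,\mathrm r}+1-\tilde f'(u_0)\bigr)\int_\Omega v_2^2\,dx<0$ by $(f_3)$ and the eigenvalue identity. Your explicit computation of $\psi(0,t)$ and your clean statement of $g''(0)$ are, if anything, slightly more transparent than the paper's presentation, but the argument is the same.
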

\begin{proof} The proof is similar to the one of \cite[Lemma 4.9]{BNW}, we report it here because it highlights the importance of assumption $(f_3)$. Part (i) follows by the Implicit Function Theorem applied to $\psi$.
Indeed, since $\mathcal E$ is a $C^2$ functional and $\psi$ is of class $C^1$ with $\psi(0,1)=0$, by $(f_3)$ we get 
\begin{equation}\label{psi'<0}
\frac{\partial}{\partial t}\Big|_{(0,1)}\psi(s,t) = \mathcal E''(u_0)[u_0,u_0]= [1-\tilde f'(u_0)]\int_{B} 
u_0^2 \,dx <0,
\end{equation}
where we have used only that $\tilde{f}'(u_0)=f'(u_0)>1$. Furthermore, since $\int_\Omega v_2=0$,
\begin{equation}\label{psi'=0}\begin{aligned}
\frac{\partial}{\partial s}\Big|_{(0,1)}\psi(s,t) &=\mathcal E'(u_0)[v_2]+ \mathcal E''(u_0)[u_0,v_2]\\
&=
[1-\tilde f'(u_0)]u_0 \int_\Omega 
v_2\,dx =0.
\end{aligned}
\end{equation}
Thus, the Implicit Function Theorem guarantees the existence of $\eps_1,\eps_2$ and $h$, as well as property (i).
Then, part (ii) is a consequence of the regularity of $\psi$. We prove now (iii), here is where $(f_3)$ plays a crucial role. 
By (i), we can write $h(s)= 1+o(s)$, for $s\in(-\varepsilon_1,\varepsilon_1)$, $s\neq 0$, so that 
$$
h(s)(u_0+sv_2)-u_0= sv_2 + o(s)
$$
and therefore, by Taylor expansion and $(f_3)$,
\begin{align*}
\mathcal E(h(s)(u_0+sv_2)) -\mathcal E(u_0) &= \frac{1}{2} \mathcal E''(u_0)[sv_2 +
o(s),sv_2+o(s)]+o(s^2)\\&=\frac{s^2}{2} \mathcal E''(u_0)[v_2,v_2]+o(s^2)\\
&=\frac{s^2}{2}\left([v_2]_{H^s_{\Omega,0}}^2+\int_\Omega[1-\tilde
  f'(u_0)]v_2^2\,dx\right)+o(s^2)\\
  &<\frac{s^2}{2}\left([v_2]_{H^s_{\Omega,0}}^2-\lambda_2^{+,\mathrm{r}}\int_\Omega v_2^2\,dx\right)+o(s^2).
\end{align*}
Then, being 
$$
[v_2]_{H^s_{\Omega,0}}^2-\lambda_2^{+,\mathrm{r}}\int_\Omega v_2^2dx=0,
$$
property (iii) holds taking $\eps_1$, $\eps_2$ smaller if necessary. 
\end{proof}

In the following lemma, we build a curve $\gamma_{\bar \tau}$ along which the energy is always less than $\mathcal E(u_0)$. The admissible curve $\bar \gamma\in \Gamma$ with the same property will be a simple reparametrization of $\gamma_{\bar \tau}$.

\begin{lemma}\label{lemma:nonconstant_p>2}
Fix 
$0<t_-<1<t_+$ such that  
\begin{equation}
  \label{eq:3}
t_- u_0 \in U_-,\quad t_+
u_0 \in U_+ \quad \text{and}\quad    u_- < t_- u_0 < u_0 < t_+ u_0 < u_+, 
\end{equation}
where $U_\pm$ are defined in $(\ref{eq:2})$. 
Let $v_2$ be the second radial increasing eigenfunction as in Lemma \ref{4.9}.
For $\tau  \ge 0$ define
\begin{equation}
  \label{eq:4}
  \begin{aligned}
\gamma_\tau: [t_-,t_+]\to H^s_{\Omega,0} \qquad &\gamma_\tau(t):= t(u_0+\tau v_2)\\ &(\mbox{resp. }\gamma_\tau(t):= t(u_0-\tau v_2)).
\end{aligned}
\end{equation}
Then there exists $\bar \tau>0$ such that $\gamma_{\bar \tau}(t_\pm ) \in U_\pm$, $\gamma_{\bar \tau}(t) \in \mathcal C_{+,*}$ (resp. $\mathcal C_{-,*}$)  for $t_- \le t \le t_+$ and 
\begin{equation}
  \label{eq:1}
\max_{t_- \le t \le t_+} \mathcal E(\gamma_{\bar \tau}(t))<\mathcal E(u_0).  
\end{equation}
As a consequence, there exists an admissible curve $\bar\gamma\in\Gamma$ along which the energy is always lower than $\mathcal E(u_0)$.
\end{lemma}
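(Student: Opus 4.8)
The plan is to show that the curve $\gamma_\tau$ of \eqref{eq:4} does the job for a suitably small $\bar\tau>0$, and then to obtain the admissible curve $\bar\gamma\in\Gamma$ simply as the affine reparametrization $\bar\gamma(\theta):=\gamma_{\bar\tau}\bigl(t_-+\theta(t_+-t_-)\bigr)$, $\theta\in[0,1]$: this is continuous into $H^s_{\Omega,0}$, has the same image and the same maximal energy as $\gamma_{\bar\tau}$, and, once $\gamma_{\bar\tau}(t_\pm)\in U_\pm$ is known, starts in $U_-$ and ends in $U_+$. So the task reduces to finding $\bar\tau>0$ for which $\gamma_{\bar\tau}(t)\in\mathcal C_{+,*}$ (resp.\ $\mathcal C_{-,*}$) for all $t\in[t_-,t_+]$, $\gamma_{\bar\tau}(t_\pm)\in U_\pm$, and \eqref{eq:1} holds; each of these I would get for $\bar\tau$ small by perturbing off the unperturbed curve $\gamma_0(t)=tu_0$.

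First I would describe $\gamma_0$. Since $\mathcal E(tu_0)=|\Omega|\bigl(\tfrac{t^2u_0^2}{2}-\tilde F(tu_0)\bigr)$, we have $\tfrac{d}{dt}\mathcal E(tu_0)=\mathcal E'(tu_0)[u_0]=|\Omega|\,u_0\bigl(tu_0-\tilde f(tu_0)\bigr)$, and by \eqref{eq:3} one has $tu_0\in(u_-,u_0)$ for $t\in(t_-,1)$ and $tu_0\in(u_0,u_+)$ for $t\in(1,t_+)$. Since $t-\tilde f(t)>0$ on $(u_-,u_0)$ by \eqref{fmp} and $t-\tilde f(t)<0$ on $(u_0,u_+)$ (cf.\ the proof of Lemma \ref{geometry}), the function $t\mapsto\mathcal E(tu_0)$ is strictly increasing on $[t_-,1]$ and strictly decreasing on $[1,t_+]$; in particular $\mathcal E(tu_0)<\mathcal E(u_0)$ for every $t\in[t_-,t_+]\setminus\{1\}$, the value $\mathcal E(u_0)$ being attained only at $t=1$.

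Next I would take $\eps_1,\eps_2,h$ as in Lemma \ref{4.9}, shrinking $\eps_2$ so that $[1-\eps_2,1+\eps_2]\subset(t_-,t_+)$, so that $h(\tau)\in(1-\eps_2,1+\eps_2)$ for $|\tau|<\eps_1$, and so that the conclusions of Lemma \ref{4.9} hold on the closed rectangle; set $K:=[t_-,1-\eps_2]\cup[1+\eps_2,t_+]$. On the compact set $K$ the previous step gives $m_0:=\max_{K}\mathcal E(tu_0)<\mathcal E(u_0)$, so by uniform continuity of $(\tau,t)\mapsto\mathcal E(\gamma_\tau(t))$ on $[0,1]\times[t_-,t_+]$ there is $\tau_1>0$ with $\mathcal E(\gamma_\tau(t))\le\tfrac12\bigl(m_0+\mathcal E(u_0)\bigr)<\mathcal E(u_0)$ for all $\tau\in[0,\tau_1]$, $t\in K$. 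On $[1-\eps_2,1+\eps_2]$ I would use that $\tfrac{d}{dt}\mathcal E(\gamma_\tau(t))=\mathcal E'\bigl(t(u_0+\tau v_2)\bigr)[u_0+\tau v_2]=\psi(\tau,t)$: by Lemma \ref{4.9}(ii) the map $t\mapsto\psi(\tau,t)$ is strictly decreasing, and by \eqref{psi=0} it vanishes exactly at $t=h(\tau)$, so $t\mapsto\mathcal E(\gamma_\tau(t))$ increases on $[1-\eps_2,h(\tau)]$ and decreases on $[h(\tau),1+\eps_2]$; hence its maximum over this interval equals $\mathcal E\bigl(h(\tau)(u_0+\tau v_2)\bigr)$, which is $<\mathcal E(u_0)$ for $0<\tau<\eps_1$ by Lemma \ref{4.9}(iii). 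Choosing $0<\bar\tau<\min\{\tau_1,\eps_1\}$ and using $[t_-,t_+]=K\cup[1-\eps_2,1+\eps_2]$ then yields \eqref{eq:1}. I expect this gluing to be the crux: the delicate point is to combine the purely local information near $t=1$ carried by the implicit function $h$ (which both locates the crest of $t\mapsto\mathcal E(\gamma_\tau(t))$ and bounds its height) with the compactness estimate on the rest of the interval, through a single admissible choice of $\bar\tau$.

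Finally I would check the membership statements, shrinking $\bar\tau$ further if needed; here one uses that $v_2\in H^s_{+,\mathrm r}$ is radial, radially non-decreasing and bounded (its boundedness being the other technical point one should not overlook). Then $u_0+\bar\tau v_2$ is radial and radially non-decreasing, and strictly positive once $\bar\tau<u_0/\|v_2\|_{L^\infty(\Omega)}$; multiplying by $t>0$ preserves all three properties, so $\gamma_{\bar\tau}(t)\in\mathcal C_+$ for every $t\in[t_-,t_+]$. Moreover, pointwise on $[t_-,t_+]$, $t_-\bigl(u_0-\bar\tau\|v_2\|_{L^\infty(\Omega)}\bigr)\le\gamma_{\bar\tau}(t)\le t_+\bigl(u_0+\bar\tau\|v_2\|_{L^\infty(\Omega)}\bigr)$, and since $u_-<t_-u_0$ and $t_+u_0<u_+$ strictly in \eqref{eq:3}, for $\bar\tau$ small this gives $u_-\le\gamma_{\bar\tau}(t)\le u_+$, i.e.\ $\gamma_{\bar\tau}(t)\in\mathcal C_{+,*}$; the variant with $-\bar\tau v_2$ (using the second radial decreasing eigenfunction, cf.\ Lemma \ref{4.9}) is identical and produces a curve in $\mathcal C_{-,*}$. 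Lastly, $\tau\mapsto\gamma_\tau(t_\pm)=t_\pm(u_0+\tau v_2)$ is continuous from $[0,\infty)$ into both $H^s_{\Omega,0}$ and $L^\infty(\Omega)$ and equals $t_\pm u_0\in U_\pm$ at $\tau=0$; since the inequalities defining $U_\pm$ in \eqref{eq:2} are strict — one an open condition on $\mathcal E$ (continuous on $H^s_{\Omega,0}$) and one on $\|\cdot-u_\pm\|_{L^\infty(\Omega)}$ — they persist under a small perturbation, whence $\gamma_{\bar\tau}(t_\pm)\in U_\pm$ for $\bar\tau$ small. Taking $\bar\tau$ below all the thresholds obtained above completes the argument, and the reparametrization of the first paragraph delivers the required $\bar\gamma\in\Gamma$.
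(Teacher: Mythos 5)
Your reconstruction is correct and follows essentially the same route as the proof the paper defers to (\cite[Lemma 4.10]{BNW}): perturbation off the unperturbed curve $\gamma_0(t)=tu_0$, with Lemma~\ref{4.9} controlling the energy near $t=1$ via the implicit function $h$, strict monotonicity of $t\mapsto\mathcal E(tu_0)$ plus uniform continuity handling the compact remainder $K$, cone membership obtained from the monotonicity of $v_2$ and the $L^\infty$-smallness of $\bar\tau v_2$, openness of $U_\pm$ for the endpoints, and the same affine reparametrization $\bar\gamma$. The one fact you assert rather than prove --- that $v_2\in L^\infty(\Omega)$, which in the case of the ball does not follow immediately from \eqref{embLinf} because a sign-changing, radially non-decreasing $H^s$ function is a priori controlled only on the outer annulus and could in principle be unbounded below near the origin --- is likewise taken for granted by the paper, so it is not a gap relative to the paper's argument, though you are right to flag it as the technical point not to overlook.
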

\noindent For the proof of the previous lemma, we refer to \cite[Lemma 4.10]{BNW}, see also \cite[Lemma~4.2]{CN}. Here the monotonicity of $v_2$ (resp. of $-v_2$) is essential to guarantee that  $\gamma_{\bar\tau}([t_-,t_+])\subset\mathcal C_{+,*}$ (resp. $\mathcal C_{-,*}$). Finally, the admissible curve $\gamma\in\Gamma$ is given in terms of $\gamma_{\bar\tau}$ as follows  
$$\bar\gamma(t):=\gamma_{\bar \tau}(t(t_+-t_-)+t_-)\quad\mbox{ for all }t\in[0,1].$$ 

\begin{proof}[$\bullet$ Proof of Theorem \ref{thm:main}]
By Proposition \ref{mountainpass}, there exists a mountain pass type solution $u\in\mathcal C_*\setminus\{u_-,\,u_+\}$ of \eqref{P} such that $\mathcal E(u)=c$. Moreover, by Lemma \ref{lemma:nonconstant_p>2} and the definition of the minimax level $c$ given in \eqref{minmax}, we have that
$$
c\le\max_{t\in [0,1]}\mathcal E(\bar\gamma(t))<\mathcal E(u_0),
$$
that is $u\not\equiv u_0$, and so $u$ is non-constant.
 Furthermore, $u>0$ a.e. in $\Omega$ by the maximum principle stated in Theorem \ref{MaxPrinc} combined with the regularity of $u$ given in Lemma \ref{regularity}. Actually, since $u$ is smooth and non-decreasing, $u>0$ in $\Omega\setminus\{0\}$. 
 
The multiplicity part of the statement is proved by reasoning in the same way for each $u_{0,i}$, with $i=1,\dots,N$. Indeed, assume without loss of generality that $u_{0,1}<u_{0,2}<\dots<u_{0,N}$. For every $i$, we define $u_{\pm,i}$ and the cone of non-negative, radial, non-decreasing (or non-increasing) functions 
$\mathcal C_{*,i}$, corresponding to $u_{0,i}$. Then 
\begin{equation}\label{order}
u_{-,1}<u_{+,1}\le u_{-,2}<\dots\le u_{+,N}.
\end{equation} 
Proceeding as in the present and in the previous sections, for every $i$, we get a non-constant positive solution $u_i\in\mathcal C_{*,i}$. Hence, by \eqref{order}, 
$$
u_{-,1}\underset{\not\equiv}{\le} u_1\underset{\not\equiv}{\le} u_{+,1}\underset{\not\equiv}{\le}
u_{-,2}\underset{\not\equiv}{\le} u_2\underset{\not\equiv}{\le} u_{+,2}\underset{\not\equiv}{\le}\dots\underset{\not\equiv}{\le} u_N\underset{\not\equiv}{\le} u_{+,N},
$$
which proves in particular that the $N$ solutions are distinct.
\end{proof}

The following proposition gives a sufficient condition on $f$ under which problem \eqref{P} admits only constant solutions. We recall that $K_\infty$ denotes the uniform bound on the $L^\infty$ norm of $u$ given in Lemma \ref{L-infty-unif}.

\begin{proposition}\label{constant-sol}
Let $\delta \in (0, \lambda_2^{\mathrm{rad}})$ and $M>0$. Suppose that $f\in\mathfrak F_{M,\delta}$ satisfies $(f_1)$ and $(f_2)$. If $f'(t)<\lambda_2^{\mathrm{rad}}+1$ for every $t\in [0,K_\infty]$, then problem \eqref{P} admits only constant solutions in $H^s_{\mathrm{rad}}$. 
\end{proposition}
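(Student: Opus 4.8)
The plan is to show that every weak solution $u\in H^s_{\mathrm{rad}}$ of \eqref{P} coincides a.e.\ in $\Omega$ with its mean value $\bar u:=\frac{1}{|\Omega|}\int_\Omega u\,dx$. The first step is to secure an a priori $L^\infty$ bound: the $L^1$-estimate of Lemma~\ref{L-1} uses only $u\ge 0$, so testing with $v\equiv 1$ gives $\|u\|_{L^1(\Omega)}\le K_1$, and combining this with the radial embedding \eqref{embLinf} yields $0\le u\le K_\infty$ in $\Omega$, where $K_\infty$ is as in Lemma~\ref{L-infty-unif}. Since $f\in C^1$ and $f'<\lambda_2^{\mathrm{rad}}+1$ on the compact interval $[0,K_\infty]$, I set $\Lambda:=\max_{[0,K_\infty]}f'$ and note $\Lambda<\lambda_2^{\mathrm{rad}}+1$; also $\bar u\in[0,K_\infty]$.

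Next I would use $v:=u-\bar u$ as a test function in the weak formulation of \eqref{P} (Definition~\ref{def-weak} with $g=f$); note that $v\in H^s_{\mathrm{rad}}$ and $\int_\Omega v\,dx=0$. Since $v(x)-v(y)=u(x)-u(y)$, the quadratic form reduces to $[v]_{H^s_{\Omega,0}}^2$ and the zero-order term to $\int_\Omega v^2\,dx$, while, using $\int_\Omega v\,dx=0$ and writing $f(u(x))-f(\bar u)=\bigl(\int_0^1 f'((1-t)\bar u+t\,u(x))\,dt\bigr)(u(x)-\bar u)$, the right-hand side becomes $\int_\Omega g_u(x)\,v(x)^2\,dx$ with $g_u(x):=\int_0^1 f'\bigl((1-t)\bar u+t\,u(x)\bigr)\,dt$. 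Because $(1-t)\bar u+t\,u(x)\in[0,K_\infty]$ for all $t\in[0,1]$, we get $g_u\le\Lambda$ a.e., hence
\[
[v]_{H^s_{\Omega,0}}^2+\int_\Omega v^2\,dx=\int_\Omega g_u\,v^2\,dx\le\Lambda\int_\Omega v^2\,dx.
\]

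Finally, since $v$ is radial with $\int_\Omega v\,dx=0$, the variational characterization \eqref{lambda2rad+} of $\lambda_2^{\mathrm{rad}}$ gives $[v]_{H^s_{\Omega,0}}^2\ge\lambda_2^{\mathrm{rad}}\int_\Omega v^2\,dx$. Combining this with the previous inequality yields $(\lambda_2^{\mathrm{rad}}+1-\Lambda)\int_\Omega v^2\,dx\le 0$, and since $\Lambda<\lambda_2^{\mathrm{rad}}+1$ this forces $\int_\Omega v^2\,dx=0$, i.e.\ $u\equiv\bar u$ in $\Omega$; then $u\equiv\bar u$ on $\mathbb R^n$ as well via the nonlocal Neumann condition, so $u$ is constant.

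I expect the only delicate point to be the a priori bound $\|u\|_{L^\infty(\Omega)}\le K_\infty$ for a radial but a priori non-monotone solution: in the annulus it is immediate from \eqref{embLinf}, whereas in the ball one needs to know the solution is bounded (e.g.\ $C^2$, as for the solutions relevant to Theorem~\ref{thm:main}) before \eqref{embLinf} can be used on $B_R\setminus B_{R/2}$. Everything else is a one-line Poincar\'e-type comparison; I also note that the assumption $\delta<\lambda_2^{\mathrm{rad}}$ is precisely the compatibility condition making $\mathfrak F_{M,\delta}$ nonempty under the bound $f'<\lambda_2^{\mathrm{rad}}+1$.
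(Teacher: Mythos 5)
Your proof is correct and takes essentially the same route as the paper's: decompose $u=\bar u+v$ with $\int_\Omega v\,dx=0$, test the weak formulation with $v$, apply the mean value theorem to $f(u)-f(\bar u)$, and conclude from the variational characterization of $\lambda_2^{\mathrm{rad}}$ that $v\equiv 0$. Your caveat about justifying $\|u\|_{L^\infty(\Omega)}\le K_\infty$ for a radial, a priori non-monotone solution in the ball is fair, but the paper's proof invokes that bound in exactly the same way, so this is not a divergence between the two arguments.
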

\begin{proof}
We first observe that, if $M<K_\infty$, condition $f'<\lambda_2^{\mathrm{rad}}+1$ in $[0,K_\infty]$ is compatible with the consequence \eqref{Mdelta} of $(f_2)$, when $\delta<\lambda_2^{\mathrm{rad}}$.
Let $u\in H^s_{\mathrm{rad}}$ be a weak solution of \eqref{P}. We can write $u=v+\mu$ for some $\mu \in \R$ and $v\in H^s_{\mathrm{rad}}$ with 
$$\begin{aligned}&\int_\Omega v \, dx=0\quad \mbox{and}\\
&\lambda_2^{\mathrm{rad}}\int_\Omega v^2\,dx \leq \frac{c_{n,s}}{2}\iint_{\R^{2n}\setminus(\Omega^c)^2}\frac{|v(x)-v(y)|^2}{|x-y|^{n+2s}}\,dx\,dy + \int_\Omega v^2\,dx.
\end{aligned}
$$
Using the definition of weak solution for $u=v+\mu$ and testing with $v$, we get
\[\begin{split}
& (\lambda_2^{\mathrm{rad}}+1) \int_\Omega v^2\,dx \leq \frac{c_{n,s}}{2}\iint_{\R^{2n}\setminus (\Omega^c)^2}\frac{|v(x)-v(y)|^2}{|x-y|^{n+2s}}\,dx\,dy + \int_\Omega v^2\,dx\\
&\hspace{1em} =\int_\Omega f(v+\mu)v\,dx=\int_\Omega [f(v+\mu)-f(\mu)]v\,dx=\int_\Omega f'(\mu + \omega v) v^2\,dx,
\end{split}
\]
where $\omega=\omega(x)$ satisfies $0\le \omega\le 1$ in $\Omega$. Using that $\|u\|_{L^\infty(\Omega)}\le K_\infty$, we deduce that $\|\mu + \omega v\|_{L^\infty(\Omega)}\le K_\infty$. Therefore, since by assumption $f'(\mu + \omega v)< \lambda_2^{\mathrm{rad}}+1$, we conclude that it must be $v=0$ and thus $u$ identically constant.
\end{proof}

\begin{remark}
Some further comments on the condition $(f_3)$ and its variants are now in order. In the local setting, it was first conjectured in \cite{BNW} and then proved in \cite{BGT,MCL,BCN} that if $f'(u_0)$ satisfies
\begin{equation}\label{f'u0-k}
f'(u_0)>1+\lambda_{k+1}^\mathrm{rad}(R)\quad\mbox{for some }k\ge1,
\end{equation} 
where $\lambda_{k+1}^\mathrm{rad}(R)$ is the $(k+1)$-st radial eigenvalue of the Neumann Laplacian in $B_R$, then the Neumann problem $-\Delta u+u=f(u)$ in $B_R$ admits a radial positive solution having exactly $k$ intersections with the constant $u_0$. It would be interesting to prove a similar result also in this fractional setting. It is worth stressing that the solution $u$ that we find in the present paper is morally the one with one intersection with $u_0$. This is due to the monotonicity of $u\in \mathcal C_*$, the identity holding for solutions of \eqref{P}  
\begin{equation}\label{u=fu}
\int_{\Omega}udx=\int_\Omega f(u)dx,
\end{equation}
and the fact that $f(t)<t$ for $t\in(u_-,u_0)$ and $f(t)>t$ in $(u_0,u_+)$, cf. \eqref{u-+} and \eqref{Cstar}.
 
We conclude this remark observing that, since $\lambda_{k}^\mathrm{rad}(R)\to0$ as $R\to\infty$, condition \eqref{f'u0-k} can be also read as a condition on the size of the domain $B_R$. 
\end{remark}

\section*{Acknowledgments} This work was partially supported by Gruppo Nazionale per l'Analisi Matematica, la Probabilit\`a e le loro Applicazioni (GNAMPA) of the I\-stituto Nazionale di Alta Matematica (INdAM).
The authors are grateful to Proff. Lorenzo Brasco and Benedetta Noris for useful discussions on the subject.
The authors acknowledge the supports of both Departments of Mathematics of the Universities of Bologna and of Turin for their visits in Bologna and in Turin, during which parts of this work have been achieved.  
E. C. is supported by MINECO grants MTM2014-52402-C3-1-P and MTM2017-84214-C2-1-P,
and is part of the Catalan research group 2014 SGR 1083.
F. C. is partially supported by the INdAM-GNAMPA Project 2019 ``Il modello di Born-Infeld per l'elettromagnetismo nonlineare: esistenza, regolarit\`a e molteplicit\`a di soluzioni'' and the project of the University of Turin Ricerca Locale 2018 Linea B (CDD 09/07/2018)- ``Problemi non lineari'' COLF\_RILO\_18\_01. 
\noindent  

\bibliographystyle{abbrv}
\bibliography{biblio}

\end{document}